\newtheoremstyle{examplestyletwo}  
  {5mm}       
  {3mm}       
  {\itshape}   
  {}        
  {\bfseries}  
  {}   
  {3mm}       
  {}           
\theoremstyle{examplestyletwo}
\newtheorem{theorem}{Theorem}[section]
\newtheorem{lemma}[theorem]{Lemma}
\newtheoremstyle{examplestyleone}  
  {3mm}       
  {10mm}       
  {\normalfont}   
  {}        
  {\bfseries}  
  {\quad}   
  {1mm}       
  {}           
\theoremstyle{examplestyleone}
\newtheorem{remark}{Remark}
\newtheorem{example}{Example}
\numberwithin{equation}{section}
\newcommand\numberthis{\addtocounter{equation}{1}\tag{\theequation}}
\title{Global minima for semilinear optimal control problems} 
\author{Ahmad Ahmad Ali\footnote{Schwerpunkt Optimierung und Approximation, 
Universit\"at Hamburg, Bundesstra{\ss}e 55, 20146 Hamburg, Germany.}, Klaus Deckelnick\footnote{Institut f\"ur Analysis und Numerik,
Otto--von--Guericke--Universit\"at Magdeburg, Universit\"atsplatz 2,
39106 Magdeburg, Germany} \& Michael Hinze\footnote{Schwerpunkt Optimierung und Approximation, 
Universit\"at Hamburg, Bundesstra{\ss}e 55, 20146 Hamburg, Germany.}}
\date{}
\begin{document}

\maketitle

\begin{center}
{\bf \LARGE  }
\end{center}  


{\small {\bf Abstract:} We consider an optimal control problem subject to a semilinear elliptic PDE together with its variational discretization. We provide a condition which allows to decide whether a solution of the necessary first order conditions is a global minimum. This condition can be explicitly evaluated 
at the discrete level. Furthermore, we prove that if the above condition holds uniformly with
respect to the discretization parameter the sequence of discrete solutions converges to a global solution of the corresponding limit problem. Numerical examples with unique global solutions are presented.}
\\[2mm]
{\small {\bf Mathematics Subject Classification (2000): 49J20, 35K20, 49M05, 49M25, 49M29, 65M12, 65M60}} \\[2mm]
{\small {\bf Keywords: Optimal control, semilinear PDE, uniqueness of global solutions, Gagliardo--Nirenberg inequality} }

\section{Introduction}
Let us consider the following optimal control problem
\[
(\mathbb{P}) \quad
 \min_{u \in U_{ad}} J(u)=\frac{1}{2} \Vert y-y_0 \Vert_{L^2(\Omega)}^2  + \dfrac{\alpha}{2} \Vert u \Vert_{L^2(\Omega)} ^2 
\]
subject to the semilinear elliptic PDE
\begin{equation}
\label{equation: 1}
 \begin{aligned}
-\Delta y +\phi(y) &=u  \quad \mbox{ in }  \Omega \subset \mathbb{R}^2, \\
y &=0  \quad \mbox{ on } \partial\Omega,
\end{aligned}
\end{equation}
and the pointwise state constraints
\begin{equation} \label{equation: 1a}
y_a(x) \leq y(x) \leq y_b(x), \quad x \in K \subset \Omega.
\end{equation}
We will formulate the precise assumptions on the data of the problem in Section 2. Since the state equation is in general nonlinear, the control
problem is nonconvex and there may be several solutions of the necessary first order conditions. These can be examined further with the help of
second order conditions but those will only give local information and usually do not allow a decision on whether the given point is
a {\it global} minimum of $(\mathbb{P})$. It is exactly this question which is the starting point of our work. Assuming that we have  an 
admissible control $\bar{u}$ which satisfies
the necessary first order conditions we will formulate a condition on the adjoint variable that guarantees that $\bar{u}$ is a solution
of the control problem $(\mathbb{P})$. This condition requires a certain $L^q$--norm to be bounded by a constant that only depends on the data and that is known
explicitly. While this approach is only of limited use at the continuous level, the situation is different when we apply our methods to a 
suitable discretisation $(\mathbb{P}_h)$ of  $(\mathbb{P})$. It turns out that we can obtain an analogous result for a discrete stationary point 
$\bar{u}_h$ and the corresponding discrete adjoint state. But since now  the discrete adjoint is available to us as a result of a
numerical computation, we can check whether our condition is satisfied. If the answer is yes, $\bar{u}_h$ is a global minimum
of $(\mathbb{P}_h)$. Moreover we are able to make the connection back to the original control problem in that we
 show that a sequence of solutions of $(\mathbb{P}_h)$, that satisfy our condition
uniformly in $h$ converge to a global solution of $(\mathbb{P})$.

To the best of the author's knowledge this is the first contribution to the study of uniqueness of solutions to semilinear elliptic optimal control problems. However, concerning the analysis, numerical treatment and implementation of semilinear optimal control problems many contributions can be found in the literature. Here we exemplarily mention the work \cite{ACT02} of Arada et al., \cite{C02} of Casas, and the work \cite{neitzel2014finite} of Neitzel et al. Further references can be found in \cite[Chapter 3]{HPUU08}, \cite{HR11}, and in \cite{CT14}, where the role of second order conditions in PDE constrained optimization is discussed.

\section{The optimal control problem  $(\mathbb{P})$}
Let $\Omega \subset \mathbb{R}^2$ be a bounded, convex and polygonal domain. We assume that $\phi:\mathbb{R}\to\mathbb{R}$ is of class $C^2$ and
monotonically increasing. 
For our analysis  we require the following structural assumption on $\phi$: \\[2mm]
{\bf Assumption 1:} There exist $r>1$ and $M \geq 0$ such that
\begin{equation} \label{assumption: 1}
\displaystyle
|\phi''(s)| \leq M \phi'(s)^{\frac{1}{r}} \quad \mbox{ for all } s \in \mathbb{R}.
\end{equation}
Let us remark for later purposes that (\ref{assumption: 1}) implies
\begin{eqnarray}
\phi'(s) & \leq & c_1 \bigl( 1+ | s |^{r_1} \bigr), \; s \in \mathbb{R}, \quad  r_1 = \frac{r}{r-1}  \label{phi1} \\
| \phi(s)|  & \leq & c_0  \bigl( 1+ | s |^{r_0} \bigr), \; s  \in \mathbb{R}, \quad r_0 = \frac{2r-1}{r-1}. \label{phi0} 
\end{eqnarray}
Note that for a power nonlinearity of the form $\phi(s)= | s |^{q-2} s$ ($q>3$) 
\[
| \phi''(s) | = (q-1)(q-2) | s |^{q-3} = (q-2)(q-1)^{\frac{1}{q-2}} [\phi'(s)]^{\frac{q-3}{q-2}},
\]
so that (\ref{assumption: 1}) is satisfied if we chose $r=\tfrac{q-2}{q-3}$. Solving this relation for $q$ yields $q = \tfrac{3r-2}{r-1}$, which
is an expression that we will encounter in our analysis below. \\[2mm]
Using the theory of monotone operators one can show that for every $u \in L^2(\Omega)$
the boundary value problem (\ref{equation: 1}) has a unique solution $y \in H^1_0(\Omega)\cap H^2(\Omega)$ which we
denote by $y=\mathcal G(u)$. Moreover, there exists a constant $c >0$ such that
\begin{equation*} 
\| y\|_{H^2(\Omega)} \leq c \bigl( 1+ \| u\|_{L^2(\Omega)} \bigr).
\end{equation*}
Next, suppose that $K$ is a (possibly empty) compact subset of $\Omega$ and define
\[
Y_{ad}:=\{ z \in C(K): y_a(x)\leq z(x)\leq y_b(x) \mbox{ for all }  x \in K \}.
\]
Here, $y_a,y_b \in C_0(\Omega)$ are given functions that satisfy $y_a(x) < y_b(x), x \in K$. \\[2mm]
We consider the semilinear optimal control problem
\[
(\mathbb{P}) \quad
\begin{array}{l}
 \min_{u \in U_{ad}} J(u):=\frac{1}{2} \Vert y-y_0 \Vert_{L^2(\Omega)}^2  + \dfrac{\alpha}{2} \Vert u \Vert_{L^2(\Omega)} ^2 \\
\mbox{ subject to } y= \mathcal G(u)  \mbox{ and } y_{|K} \in Y_{ad},
\end{array}
\]
where
\[
U_{ad}:=\{ u \in L^2(\Omega) : u_a \leq u(x) \leq u_b \mbox{ a.e. in } \Omega \}
\]
and $- \infty \leq u_a \leq u_b \leq \infty$ are given.
By classical arguments Problem~$(\mathbb{P})$ has a solution $\bar u \in U_{ad}$.

\begin{remark}
We note that the choice $K=\bar \Omega$ also is possible, if the bounds satisfy the compatibility condition $y_a < 0 < y_b$ on $\partial \Omega$, which only requires minor modifications in the analysis.
\end{remark}

\section{The variational discretization of $(\mathbb{P})$ }
In this section we approximate Problem~$(\mathbb{P})$ using the variational discretization approach introduced in \cite{hinze2005variational}. To this end, let $\mathcal{T}_h$ be an admissible triangulation of $\Omega$ so that 
\[
\bar\Omega= \bigcup_{T \in \mathcal{T}_h} \bar T. 
\]
We define the space of linear finite elements, 
\[
X_{h0}:= \{ v_h \in C(\bar\Omega) : v_h \mbox{ is a linear polynomial on each }  T \in \mathcal{T}_h \mbox{ and } {v_h}_{ | \partial\Omega}=0   \}
\]
and approximate \eqref{equation: 1} as follows: for a given $u \in L^2(\Omega)$, find $y_h \in  X_{h0}$ such that
\begin{equation}
\label{equation: 3}
 \int_\Omega \nabla y_h   \cdot  \nabla v_h +  \phi(y_h) v_h \, dx = \int_\Omega u v_h \,dx \quad \forall \, v_h \in X_{h0}.
\end{equation}
Using a fixed point argument one can show that \eqref{equation: 3} has a unique solution $y_h \in X_{h0}$ which we denote by
$\mathcal{G}_h(u)$. Finally, let us define 
\begin{displaymath}
\mathcal N_h := \lbrace x_j \, | \, x_j \mbox{ is a vertex of } T \in \mathcal T_h, \mbox{ where } T \cap K \neq \emptyset \rbrace.
\end{displaymath}
The variational discretization of Problem~$(\mathbb{P})$ now reads:
\[
(\mathbb{P}_h) \quad
\begin{array}{l}
 \min_{u \in U_{ad}} J_h(u):=\frac{1}{2} \Vert y_h-y_0 \Vert_{L^2(\Omega)}^2  + \dfrac{\alpha}{2} \Vert u \Vert_{L^2(\Omega)} ^2 \\
\mbox{ subject to } y_h = \mathcal G_h(u),  (y_h(x_j))_{x_j \in \mathcal N_h} \in Y^h_{ad}, 
\end{array}
\]
where 
\[
Y^h_{ad}:=\{ (z_j)_{x_j \in \mathcal N_h} \, | \, y_a(x_j) \leq z_j \leq y_b(x_j), x_j \in \mathcal N_h \}.
\]
We note that Problem~$(\mathbb{P}_h)$ is still an infinite dimensional optimization problem since the controls are sought in $U_{ad}$. If a feasible point for $(\mathbb{P}_h)$ exists, standard techniques yield the existence of a solution $\bar u_h \in U_{ad}$ for Problem~$(\mathbb{P}_h)$. The typical
approach in order to find an optimum of $(\mathbb{P}_h)$ consists in trying to determine solutions of the necessary first order conditions.
A formal analysis shows  that these conditions read in our case: there exist multipliers $\bar{p}_h \in X_{h0}$ and
$\bar{\mu}_j \in \mathbb{R}, x_j \in \mathcal N_h$ such that

\begin{eqnarray}
 \int_\Omega \nabla \bar y_h   \cdot  \nabla v_h +  \phi( \bar y_h) v_h \, dx & = & \int_\Omega \bar u_h v_h \,dx \qquad \forall \, v_h \in X_{h0}, \label{equation: 4}  \\
\int_\Omega \nabla \bar p_h \cdot \nabla v_h + \phi'(\bar y_h)\bar p_h v_h \, dx 
& = &  \nonumber \\
\lefteqn{ \hspace{-3cm}  \int_\Omega (\bar y_h -y_0) v_h \, dx  + \sum_{x_j \in \mathcal N_h} \bar \mu_j v_h(x_j)  \qquad \forall \, v_h \in X_{h0}, } 
\label{equation: 2}  \\
\int_\Omega (\bar p_h + \alpha \bar u_h)(u-\bar u_h) \, dx &\geq & 0 \qquad \forall \, u \in U_{ad}, \label{inequality: 1}\\
\sum_{x_j \in \mathcal N_h} \bar \mu_j (z_j-\bar y_h(x_j)) & \leq & 0 \qquad \forall \, (z_j)_{x_j \in \mathcal N_h} \in Y^h_{ad}.\label{inequality: 2}
\end{eqnarray}
Note that condition (\ref{inequality: 1}) is equivalent to the relation $\bar u_h = P_{[u_a,u_b]} \bigl(-\frac{\bar p_h}{\alpha} \bigr)$, so that
the control variable is implicitly discretized and (\ref{equation: 4})--(\ref{inequality: 2}) amounts to solving a nonlinear finite-dimensional system.
In order to state our main result of this section we introduce the following constant:

\begin{equation}  \label{etadef}
\displaystyle
\eta(\alpha,r):= \alpha^{\frac{\rho}{2}}  C^{\frac{2-2r}{r}}_q  M^{-1} \big(\frac{r-1}{2r-1} \big)^{\frac{1-r}{r}} 
q^{1/q} r^{1/r} \rho^{\rho/2} (2 - \rho)^{\frac{\rho}{2}-1}.
\end{equation}
Here, $q:=\tfrac{3r-2}{r-1}, \, \rho:=\frac{r+q}{rq}$, while $M$ and $r$ appear in  (\ref{assumption: 1}). Furthermore, $C_q$ is  an upper bound on the 
optimal constant in the Gagliardo-Nirenberg inequality $\Vert f \Vert_{L^q} \leq C \Vert f \Vert_{L^2}^{\frac{2}{q}} \Vert \nabla f \Vert_{L^2}^{\frac{q-2}{q}}$ ($q \geq 2)$.
For our purposes it will
be important  to specify a constant $C_q$ that is as sharp as possible. Lemma \ref{lemma: 2} in the Appendix will give three such bounds, two of which
can be found in the literature, while the third is new to the best of our knowledge. Let us now formulate the main result of this section.

\begin{theorem}
\label{theorem: 2}
Suppose that $\bar u_h \in U_{ad}, \bar y_h \in X_{h0}, \bar p_h \in X_{h0}, (\bar \mu_j)_{x_j \in \mathcal N_h}$ is a solution of 
(\ref{equation: 4})-(\ref{inequality: 2}).
If
\begin{align}
\label{inequality: 18}
\|\bar p_h\|_{L^q(\Omega)} \leq \eta(\alpha,r), 
\end{align}
then $\bar u_h$ is a global minimum for Problem~$(\mathbb{P}_h)$. If the inequality (\ref{inequality: 18}) is strict, then
$\bar u_h$ is the unique global minimum.
\end{theorem}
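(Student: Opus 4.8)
The plan is to establish directly that $J_h(u)\ge J_h(\bar u_h)$ for every $u\in U_{ad}$ that is feasible for $(\mathbb{P}_h)$. The strategy is the usual one for nonconvex problems: expand the difference of cost values, remove everything that is linear in the perturbation by means of the first-order system (\ref{equation: 4})--(\ref{inequality: 2}), and then show that the single genuinely nonconvex contribution is dominated by the leftover quadratic terms precisely under the hypothesis (\ref{inequality: 18}).

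First I would fix such a $u$, put $y_h:=\mathcal G_h(u)$, $w:=y_h-\bar y_h\in X_{h0}$, and use the exact identity
\[
J_h(u)-J_h(\bar u_h)=(w,\bar y_h-y_0)_{L^2}+\alpha(u-\bar u_h,\bar u_h)_{L^2}+\tfrac12\|w\|_{L^2}^2+\tfrac\alpha2\|u-\bar u_h\|_{L^2}^2 .
\]
Testing the adjoint equation (\ref{equation: 2}) with $v_h=w$, and testing the difference of the state equations (\ref{equation: 3}) (for $u$) and (\ref{equation: 4}) with $v_h=\bar p_h$, the two linear terms can be rewritten, with $R:=\phi(y_h)-\phi(\bar y_h)-\phi'(\bar y_h)\,w$, as
\[
(u-\bar u_h,\bar p_h+\alpha\bar u_h)_{L^2}-\sum_{x_j\in\mathcal N_h}\bar\mu_j\bigl(y_h(x_j)-\bar y_h(x_j)\bigr)-\int_\Omega R\,\bar p_h\,dx .
\]
The first term is nonnegative by the variational inequality (\ref{inequality: 1}), and the second is nonnegative by (\ref{inequality: 2}), since feasibility of $u$ means $(y_h(x_j))_{x_j\in\mathcal N_h}\in Y^h_{ad}$. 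Hence
\[
J_h(u)-J_h(\bar u_h)\;\ge\;\tfrac12\|w\|_{L^2}^2+\tfrac\alpha2\|u-\bar u_h\|_{L^2}^2-\int_\Omega R\,\bar p_h\,dx ,
\]
and everything reduces to estimating $\int_\Omega R\,\bar p_h\,dx$.

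For the Taylor defect $R$ I would use Assumption~1: writing $R=w^2\int_0^1(1-t)\phi''(\bar y_h+tw)\,dt$, bounding $|\phi''|\le M(\phi')^{1/r}$, and exploiting that $\phi$ is increasing (Hölder's inequality in the $t$-variable, together with an elementary comparison such as $\int_{\bar y_h}^{y_h}\bigl(\phi(s)-\phi(\bar y_h)\bigr)\,ds\le\bigl(\phi(y_h)-\phi(\bar y_h)\bigr)w$), one obtains a pointwise bound of the form $|R|\le c(M,r)\,|w|^{2-1/r}\,|\phi(y_h)-\phi(\bar y_h)|^{1/r}$ in which the growth constants $c_0,c_1$ from (\ref{phi1})--(\ref{phi0}) do not appear. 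I would then apply Hölder's inequality on $\Omega$ with the three exponents $r$, $q=\tfrac{3r-2}{r-1}$ and $\tfrac1{1-\rho}$ — the value of $q$ is exactly the one for which the $w$-factors combine into $\|w\|_{L^q}^q$ — followed by the Gagliardo--Nirenberg inequality $\|w\|_{L^q}\le C_q\|w\|_{L^2}^{2/q}\|\nabla w\|_{L^2}^{(q-2)/q}$ and the a priori bounds $\int_\Omega\bigl(\phi(y_h)-\phi(\bar y_h)\bigr)w\,dx\le\|u-\bar u_h\|_{L^2}\|w\|_{L^2}$ and $\|\nabla w\|_{L^2}^2\le\|u-\bar u_h\|_{L^2}\|w\|_{L^2}$, both obtained from the difference of state equations tested with $w$ and the monotonicity of $\phi$. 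Tracking the exponents, this produces
\[
\int_\Omega R\,\bar p_h\,dx\;\le\;c\,\|\bar p_h\|_{L^q(\Omega)}\,\|w\|_{L^2(\Omega)}^{\,2-\rho}\,\|u-\bar u_h\|_{L^2(\Omega)}^{\,\rho},
\]
and a final Young inequality with an optimised weight absorbs the right-hand side into $\tfrac12\|w\|_{L^2}^2+\tfrac\alpha2\|u-\bar u_h\|_{L^2}^2$ precisely when $\|\bar p_h\|_{L^q}\le\eta(\alpha,r)$; the closed form of $\eta$ is nothing but the product of the optimal constants accumulated in the Hölder-in-$t$, Gagliardo--Nirenberg and Young steps. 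This yields $J_h(u)\ge J_h(\bar u_h)$, i.e.\ global optimality of $\bar u_h$. If (\ref{inequality: 18}) holds strictly, the Young step becomes strict unless $\|w\|_{L^2}\|u-\bar u_h\|_{L^2}=0$; in either case, substituting back into the reduced inequality and using $\alpha>0$ forces $u=\bar u_h$, which is the uniqueness assertion.

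The step I expect to be the main obstacle is the pointwise estimate of $R$: one must peel off the factor $|\phi(y_h)-\phi(\bar y_h)|^{1/r}$, so that it can afterwards be controlled through the state equation rather than through the uncontrolled growth of $\phi$, while keeping every constant sharp enough to reproduce exactly the expression $\eta(\alpha,r)$ — the factors $\left(\tfrac{r-1}{2r-1}\right)^{(1-r)/r}$, $q^{1/q}r^{1/r}$ and $\rho^{\rho/2}(2-\rho)^{\rho/2-1}$ each emerge from one of these optimisations. In addition, one has to check that the exponents generated by the Hölder, Gagliardo--Nirenberg and Young steps conspire to give precisely the homogeneity $\|w\|_{L^2}^{2-\rho}\|u-\bar u_h\|_{L^2}^{\rho}$ needed to match the two quadratic terms at the threshold.
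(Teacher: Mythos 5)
Your skeleton coincides with the paper's argument: the same expansion of $J_h(u)-J_h(\bar u_h)$, the same elimination of the linear terms via the adjoint equation (\ref{equation: 2}) tested with $w=y_h-\bar y_h$, the difference of the state equations tested with $\bar p_h$, and the sign information (\ref{inequality: 1}), (\ref{inequality: 2}); moreover your pointwise bound $|R|\le M\bigl(\tfrac{r-1}{2r-1}\bigr)^{(r-1)/r}|w|^{2-1/r}\,|\phi(y_h)-\phi(\bar y_h)|^{1/r}$ is exactly Lemma~\ref{lemma: 1} of the paper rewritten through the identity $\int_0^1\phi'(\bar y_h+tw)\,dt=(\phi(y_h)-\phi(\bar y_h))/w$, and your H\"older exponents $q$, $\tfrac{qr}{2r-2}=\tfrac{1}{1-\rho}$, $r$ and the Gagliardo--Nirenberg step match the paper's. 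Up to that point the proposal is sound.

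The genuine gap is the step where you control $\|\nabla w\|_{L^2}^{2/q}\bigl(\int_\Omega(\phi(y_h)-\phi(\bar y_h))w\,dx\bigr)^{1/r}$ by invoking the two a priori bounds \emph{separately}. Setting $a:=\|\nabla w\|_{L^2}^2$, $b:=\int_\Omega(\phi(y_h)-\phi(\bar y_h))w\,dx$ and $S:=\|u-\bar u_h\|_{L^2}\|w\|_{L^2}$, the bounds $a\le S$ and $b\le S$ give only $a^{1/q}b^{1/r}\le S^{\rho}$, whereas the state equation actually controls the \emph{sum}, $a+b\le S$, and the paper exploits this through the optimised two-term Young inequality of Lemma~\ref{lemma:0}, $a^{1/q}b^{1/r}\le q^{-1/q}r^{-1/r}\rho^{-\rho}(a+b)^{\rho}$. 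That is precisely where the factor $q^{1/q}r^{1/r}\rho^{\rho}$ in $\eta(\alpha,r)$ originates (the final weighted Young step contributes only $\rho^{-\rho/2}(2-\rho)^{\rho/2-1}$, not $\rho^{\rho/2}(2-\rho)^{\rho/2-1}$ as your accounting suggests, and Gagliardo--Nirenberg contributes $C_q^{(2-2r)/r}$). With your separate bounds the absorption into $\tfrac12\|w\|_{L^2}^2+\tfrac{\alpha}{2}\|u-\bar u_h\|_{L^2}^2$ works only under $\|\bar p_h\|_{L^q(\Omega)}\le q^{-1/q}r^{-1/r}\rho^{-\rho}\,\eta(\alpha,r)$, a strictly smaller threshold (about $0.62\,\eta$ for $r=2$ and $0.65\,\eta$ for $r=4/3$), so the argument as written proves a weaker statement than Theorem~\ref{theorem: 2} and does not reproduce the stated constant. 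The fix is exactly the joint bound $a+b\le S$ combined with Lemma~\ref{lemma:0}; with that replacement the rest of your outline, including the uniqueness argument under strict inequality, goes through and coincides with the paper's proof.
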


\begin{proof}
Let $u_h \in U_{ad}$ be a feasible control,  $y_h=\mathcal{G}_h(u_h)$ the associated state with 
$(y_h(x_j))_{x_j \in \mathcal N_h}  \in Y^h_{ad}$.   We have
\begin{align*}
J_h(u_h)-J_h(\bar u_h) &= \frac{1}{2} \| y_h-\bar y_h \|_{L^2(\Omega)}^2 + \dfrac{\alpha}{2} \| u_h -\bar u_h\|_{L^2(\Omega)} ^2  + \alpha \int_\Omega \bar u_h (u_h-\bar u_h) \, dx \\ 
& \quad  + \int_\Omega (\bar y_h-y_0)(y_h-\bar y_h) \, dx =: (A) 
\end{align*}
%
Using $v_h:= y_h-\bar y_h$ in \eqref{equation: 2} we get
\begin{align*}
(A) &= \frac{1}{2} \| y_h-\bar y_h \|_{L^2(\Omega)}^2 + \dfrac{\alpha}{2} \| u_h -\bar u_h\|_{L^2(\Omega)} ^2 + \alpha \int_\Omega \bar u_h (u_h-\bar u_h) \, dx \\
& \quad  +\int_\Omega \nabla \bar p_h \cdot \nabla (y_h-\bar y_h) + \phi'(\bar y_h)\bar p_h (y_h-\bar y_h) \, dx - \sum_{x_j \in \mathcal N_h}
\bar \mu_j (y_h(x_j) - \bar y_h(x_j))  \\
& \geq \frac{1}{2} \| y_h-\bar y_h \|_{L^2(\Omega)}^2 + \dfrac{\alpha}{2} \| u_h -\bar u_h\|_{L^2(\Omega)} ^2 + \alpha \int_\Omega \bar u_h (u_h-\bar u_h) \, dx \\
& \quad  +\int_\Omega \nabla \bar p_h \cdot \nabla (y_h-\bar y_h) + \phi'(\bar y_h)\bar p_h (y_h-\bar y_h) \, dx,   \numberthis  \label{side calculation: 1}\\ 
\end{align*}
%
by \eqref{inequality: 2}. Using (\ref{equation: 3}) for  $y_h$ and $\bar y_h$ with test function $\bar p_h$  we get
\begin{eqnarray*}
\lefteqn{
\int_\Omega \nabla \bar p_h \cdot \nabla (y_h-\bar y_h)\, dx  = \int_\Omega (u_h-\bar u_h ) \bar p_h  \, dx - \int_\Omega \big( \phi(y_h)-\phi(\bar y_h)   \big) \bar p_h \, dx } \\
&=& \int_\Omega (u_h-\bar u_h ) \bar p_h  \, dx - \int_\Omega \bar p_h (y_h-\bar y_h) \int_0^1 \phi'\big(t y_h+ (1-t)\bar y_h \big) \, dt \, dx.
\end{eqnarray*}
%
Using this in \eqref{side calculation: 1} and recalling \eqref{inequality: 1} we arrive at
\begin{eqnarray}
\lefteqn{
J_h(u_h)-J_h(\bar u_h)  }  \label{side calculation: 4} \\
& \geq &  \frac{1}{2} \| y_h-\bar y_h \|_{L^2(\Omega)}^2 + \dfrac{\alpha}{2} \| u_h -\bar u_h\|_{L^2(\Omega)} ^2 + \int_\Omega  (\alpha \bar u_h
+ \bar p_h) (u_h-\bar u_h) \, dx \nonumber  \\
& & - \int_\Omega \bar p_h (y_h-\bar y_h) \int_0^1 \phi'\big(t y_h+ (1-t)\bar y_h \big) - \phi'(\bar y_h) \, dt \, dx \nonumber  \\
& \geq &  \frac{1}{2} \| y_h-\bar y_h \|_{L^2(\Omega)}^2 + \dfrac{\alpha}{2} \| u_h -\bar u_h\|_{L^2(\Omega)} ^2 - R_h(u_h), \nonumber 
\end{eqnarray}
where  
\begin{equation*}
 R_h(u_h) := \int_\Omega \bar p_h (y_h-\bar y_h) \int_0^1 \phi'\big(t y_h+ (1-t)\bar y_h \big)-\phi'(\bar y_h) \, dt \, dx. 
\end{equation*}
The aim is now to estimate $ R_h(u_h) $. To begin, Lemma~\ref{lemma: 1} implies that
\begin{align*}
|  R_h(u_h)  | &\leq L_r \int_\Omega |\bar p_h | |y_h-\bar y_h|^2  \Big( \int_0^1 \phi'\big(t y_h+ (1-t)\bar y_h \big) \, dt \Big)^{\frac{1}{r}} \, dx \\
%
& = L_r \int_\Omega |\bar p_h | |y_h-\bar y_h|^{\frac{2r-2}{r}}  \Big( \int_0^1 \phi'\big(t y_h+ (1-t)\bar y_h \big) \, dt |y_h-\bar y_h|^2 \Big)^{\frac{1}{r}} \, dx, \end{align*}
where $L_r = M \bigl( \frac{r-1}{2r-1} \bigr)^{(r-1)/r}$. 
Next, H\"{o}lder's inequality with exponents 
\begin{displaymath}
q=\frac{3r-2}{r-1}, \;  \frac{r(3r-2)}{2(r-1)^2}=  \frac{qr}{2r-2} \mbox{  and }  r
\end{displaymath}
together with Lemma \ref{lemma: 2} yield
\begin{eqnarray*}
\lefteqn{  \hspace{-1.7cm} | R_h(u_h) |  \leq L_r \|\bar p_h\|_{L^q(\Omega)}  \| y_h- \bar y_h\|^{\frac{2r-2}{r}}_{L^q(\Omega)}  
\bigg ( \int_\Omega \int_0^1 \phi'\big(t y_h+ (1-t)\bar y_h \big) \, dt |y_h-\bar y_h|^2 \, dx \bigg)^{\frac{1}{r}} } \\
& \leq &  L_r C^{\frac{2r-2}{r}}_q \|\bar p_h\|_{L^q(\Omega)} \| y_h- \bar y_h\|^{\frac{4r-4}{q r}}_{L^2(\Omega)}  \| \nabla( y_h- \bar y_h) \|^{\frac{2}{q}}_{L^2(\Omega)} \\
& &  \times \bigg ( \int_\Omega \int_0^1 \phi'\big(t y_h+ (1-t)\bar y_h \big) \, dt |y_h-\bar y_h|^2 \, dx \bigg)^{\frac{1}{r}}.
\end{eqnarray*}
Here we also made use of the relation $\frac{2r-2}{r} (1-\frac{2}{q}) = \frac{2}{q}$.
Applying Lemma \ref{lemma:0} with
\[
a:= \int_\Omega | \nabla( y_h- \bar y_h) |^2 \, dx, \;
b:= \int_\Omega \int_0^1 \phi'\big(t y_h+ (1-t)\bar y_h \big) \, dt |y_h-\bar y_h|^2 \, dx, \quad  \lambda:= \frac{1}{q}, \mu:=\frac{1}{r}
\]
we obtain 
\begin{eqnarray}
|  R_h(u_h)  | &\leq & L_r C^{\frac{2r-2}{r}}_q d_r \|\bar p_h\|_{L^q(\Omega)} \| y_h- \bar y_h\|^{\frac{4r-4}{q r}}_{L^2(\Omega)} \label{side calculation: 2}\\
&  &\times \bigg( \int_\Omega | \nabla( y_h- \bar y_h) |^2 \, dx +   \int_\Omega \int_0^1 \phi'\big(t y_h+ (1-t)\bar y_h \big) \, dt |y_h-\bar y_h|^2 \, dx    \bigg)^{\rho}, \nonumber
\end{eqnarray}
where 
\[
d_r = q^{-1/q}  r^{-1/r} \rho^{-\rho}, \quad \rho= \frac{r+q}{rq}.
\]
Using again (\ref{equation: 3}) for  $y_h, \bar y_h$, this time with test function $y_h-\bar y_h$ yields 
\begin{align*}
\int_\Omega | \nabla( y_h- \bar y_h) |^2 \, dx +   \int_\Omega \int_0^1 \phi'\big(t y_h+ (1-t)\bar y_h \big) \, dt |y_h-\bar y_h|^2 \, dx   \\
\leq \| u_h- \bar u_h\|_{L^2(\Omega)} \| y_h- \bar y_h\|_{L^2(\Omega)}. 
\end{align*}
Inserting this estimate into \eqref{side calculation: 2} and observing that $\tfrac{4r-4}{qr} + \rho =2-\rho$ we deduce
\begin{align*}
|  R_h(u_h)  | & \leq L_r C^{\frac{2r-2}{r}}_q d_r \|\bar p_h\|_{L^q(\Omega)} \| y_h- \bar y_h\|^{2-\rho}_{L^2(\Omega)}  \| u_h- \bar u_h\|^{\rho}_{L^2(\Omega)} \\
& = 2 \alpha^{-\frac{\rho}{2}} L_r C^{\frac{2r-2}{r}}_q d_r \|\bar p_h\|_{L^q(\Omega)} 
 \Big( \frac{1}{2} \| y_h- \bar y_h \|_{L^2(\Omega)}^2  \Big)^{1-\frac{\rho}{2}}   \Big( \frac{\alpha}{2} \| u_h- \bar u_h \|_{L^2(\Omega)}^2  \Big)^{\frac{\rho}{2}}.  
\end{align*}
Applying again Lemma \ref{lemma:0},  this time with the choices 
\[
a := \frac{1}{2} \| y_h- \bar y_h \|_{L^2(\Omega)}^2, \; b := \frac{\alpha}{2} \| u_h- \bar u_h \|_{L^2(\Omega)}^2,
\quad  \lambda := 1-\frac{\rho}{2},\;  \mu:= \frac{\rho}{2},
\]
we obtain 
\begin{equation}  \label{side calculation: 3}
\displaystyle
|  R_h(u_h)  |   \leq 2 \alpha^{-\frac{\rho}{2}} L_r C^{\frac{2r-2}{r}}_q d_r e_r  \|\bar p_h\|_{L^q(\Omega)} 
 \Big(  \frac{1}{2} \| y_h- \bar y_h \|_{L^2(\Omega)}^2 +   \frac{\alpha}{2} \| u_h- \bar u_h \|_{L^2(\Omega)}^2   \Big),   
\end{equation}
where 
\[
e_r = \bigl( 1 - \frac{\rho}{2} \bigr)^{1- \frac{\rho}{2}} \bigl( \frac{\rho}{2} \bigr)^{\frac{\rho}{2}}.
\]
Using \eqref{side calculation: 3} in \eqref{side calculation: 4} we get
\begin{eqnarray*}
\lefteqn{ J_h(u_h)-J_h(\bar u_h) } \\
&\geq &  \Big(  \frac{1}{2} \| y_h- \bar y_h \|_{L^2(\Omega)}^2 +   \frac{\alpha}{2} \| u_h- \bar u_h \|_{L^2(\Omega)}^2   \Big)  \Big(  1-2 \alpha^{-\frac{\rho}{2}} L_r C^{\frac{2r-2}{r}}_q d_r e_r  \|\bar p_h\|_{L^q(\Omega)} \Big)   
\end{eqnarray*}
so that $J_h(u_h) \geq J_h(\bar u_h)$ provided that
\begin{align}
\label{side calculation: 5}
\|\bar p_h\|_{L^q(\Omega)} \leq \big( 2 \alpha^{-\frac{\rho}{2}} L_r C^{\frac{2r-2}{r}}_q d_r e_r \big)^{-1}.
\end{align} 
By direct calculations, we have
\begin{displaymath}
2 d_r e_r  = q^{-1/q} r^{-1/r} \rho^{- \rho/2} (2 - \rho)^{1-\frac{\rho}{2}}.
\end{displaymath}
Hence, using the above result and the value of $L_r$ from Lemma~\ref{lemma: 1} we can rewrite \eqref{side calculation: 5} as
\begin{align*}
\|\bar p_h\|_{L^q(\Omega)} &\leq \alpha^{\frac{\rho}{2}}  C^{\frac{2-2r}{r}}_q  M^{-1} \big(\frac{r-1}{2r-1} \big)^{\frac{1-r}{r}} 
q^{1/q} r^{1/r} \rho^{ \rho/2} (2 - \rho)^{\frac{\rho}{2}-1}
\end{align*}
which is the desired result.
\end{proof}

\noindent
Since  the adjoint state $\bar p_h$ and the quantity $\eta(\alpha,r)$ can be computed {\it explicitly},  Theorem \ref{theorem: 2} allows
us to decide whether a function $\bar u_h$ which satisfies the necessary conditions of first order is a global minimum of 
$(\mathbb{P}_h)$. A natural question then is, whether a sequence $(\bar u_h)_{0<h \leq h_0}$ of minima satisfying
(\ref{inequality: 18}) uniformly in $h$ converges to a global minimum of  $(\mathbb{P})$. We shall address this problem in the
next section and  it will be useful to have a continuous analogue of Theorem~\ref{theorem: 2}.  A function $\bar u \in U_{ad}$
satisfies the necessary first order conditions for problem $(\mathbb{P})$ if there exist $\bar p \in L^2(\Omega)$ and a 
measure $\bar \mu \in \mathcal M(K)$ such that 
\begin{eqnarray}
 \int_\Omega \nabla \bar y   \cdot  \nabla v +  \phi( \bar y) v \, dx & = & \int_\Omega \bar u v \,dx \qquad \forall \, v \in H^1_0(\Omega), \label{equation: 4c}  \\
\int_\Omega  \bar p (- \Delta v)  + \phi'(\bar y)\bar p v \, dx 
& = &  \nonumber \\
\lefteqn{ \hspace{-2cm}  \int_\Omega (\bar y -y_0) v \, dx  +   \int_K v \, d\bar\mu  \quad \forall v \in 
H^1_0(\Omega) \cap H^2(\Omega), } 
\label{equation: 2c}  \\
\int_\Omega (\bar p + \alpha \bar u)(u-\bar u) \, dx &\geq & 0 \qquad \forall \, u \in U_{ad}, \label{inequality: 1c}\\
\int_K (z-\bar y)\, d\bar\mu & \leq & 0 \qquad \forall \, z \in Y_{ad}. \label{inequality: 2c}
\end{eqnarray}
It is well--known that the function $\bar p$ then belongs to $W^{1,s}_0(\Omega)$ for any $1<s<2$ and hence to
$L^q(\Omega)$ for any $q<\infty$ (recall that $\Omega \subset \mathbb{R}^2$). Arguing in almost the same way as in the proof of Theorem \ref{theorem: 2} we obtain:

\begin{theorem}
\label{theorem: 5}
Suppose that $\bar u \in U_{ad}, \bar y \in H^1_0(\Omega), \bar p \in L^2(\Omega), \bar \mu \in \mathcal M(K)$ is a solution of 
(\ref{equation: 4c})-(\ref{inequality: 2c}).
If
\begin{align}
\label{18strich}
\|\bar p \|_{L^q(\Omega)} \leq \eta(\alpha,r), 
\end{align}
then $\bar u$ is a global minimum for Problem~$(\mathbb{P})$. If the inequality (\ref{18strich}) is strict, then
$\bar u$ is the unique global minimum.
\end{theorem}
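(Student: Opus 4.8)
The plan is to mimic the proof of Theorem~\ref{theorem: 2} almost verbatim, working directly at the continuous level. Let $u \in U_{ad}$ be an arbitrary feasible control with state $y = \mathcal G(u)$ satisfying $y_{|K} \in Y_{ad}$, and expand
\[
J(u) - J(\bar u) = \tfrac{1}{2}\|y-\bar y\|_{L^2(\Omega)}^2 + \tfrac{\alpha}{2}\|u-\bar u\|_{L^2(\Omega)}^2 + \alpha\int_\Omega \bar u(u-\bar u)\,dx + \int_\Omega(\bar y - y_0)(y-\bar y)\,dx.
\]
First I would test the adjoint equation \eqref{equation: 2c} with $v := y - \bar y$; this requires $y - \bar y \in H^1_0(\Omega)\cap H^2(\Omega)$, which holds because of the $H^2$-regularity of $\mathcal G$ on a convex polygonal domain. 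This replaces $\int_\Omega(\bar y - y_0)(y-\bar y)\,dx$ by $\int_\Omega \bar p(-\Delta(y-\bar y))\,dx + \int_\Omega \phi'(\bar y)\bar p(y-\bar y)\,dx - \int_K (y-\bar y)\,d\bar\mu$. The last term is $\geq 0$ to subtract by \eqref{inequality: 2c} (since $y_{|K}\in Y_{ad}$), giving a lower bound. Then I would use the state equation \eqref{equation: 4c} for $y$ and $\bar y$ with test function $\bar p$ (legitimate since $\bar p \in H^1_0(\Omega)$, in fact $\bar p \in W^{1,s}_0$ for $s<2$, and $-\Delta(y-\bar y)\in L^2$ so the pairing $\int_\Omega \bar p(-\Delta(y-\bar y))$ equals $\int_\Omega \nabla\bar p\cdot\nabla(y-\bar y)$) to convert the leading term into $\int_\Omega(u-\bar u)\bar p\,dx - \int_\Omega \bar p(y-\bar y)\int_0^1\phi'(ty+(1-t)\bar y)\,dt\,dx$. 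Combining with the variational inequality \eqref{inequality: 1c} for $u - \bar u$, exactly as in \eqref{side calculation: 4}, yields
\[
J(u) - J(\bar u) \geq \tfrac{1}{2}\|y-\bar y\|_{L^2(\Omega)}^2 + \tfrac{\alpha}{2}\|u-\bar u\|_{L^2(\Omega)}^2 - R(u),
\]
with $R(u) := \int_\Omega \bar p(y-\bar y)\int_0^1\bigl(\phi'(ty+(1-t)\bar y)-\phi'(\bar y)\bigr)\,dt\,dx$.

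The estimation of $R(u)$ is then \emph{identical} to the discrete case: apply Lemma~\ref{lemma: 1} to bound the integrand, then Hölder with exponents $q$, $\frac{qr}{2r-2}$, $r$, then the Gagliardo–Nirenberg inequality (Lemma~\ref{lemma: 2}, valid for $f \in H^1_0(\Omega)$), then Lemma~\ref{lemma:0} twice, using at the intermediate step the energy identity obtained by testing \eqref{equation: 4c} for $y, \bar y$ with $y - \bar y$, namely $\int_\Omega|\nabla(y-\bar y)|^2 + \int_\Omega\int_0^1\phi'(ty+(1-t)\bar y)\,dt\,|y-\bar y|^2\,dx \leq \|u-\bar u\|_{L^2}\|y-\bar y\|_{L^2}$. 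This produces
\[
|R(u)| \leq 2\alpha^{-\rho/2}L_r C_q^{(2r-2)/r}d_r e_r \|\bar p\|_{L^q(\Omega)}\Bigl(\tfrac{1}{2}\|y-\bar y\|_{L^2(\Omega)}^2 + \tfrac{\alpha}{2}\|u-\bar u\|_{L^2(\Omega)}^2\Bigr),
\]
and the assumption $\|\bar p\|_{L^q(\Omega)} \leq \eta(\alpha,r)$ makes the prefactor $\leq 1$, so $J(u) \geq J(\bar u)$; strict inequality in \eqref{18strich} forces $\|y - \bar y\|_{L^2} = \|u-\bar u\|_{L^2} = 0$ whenever $J(u) = J(\bar u)$, giving uniqueness.

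The main obstacle is purely a matter of \emph{justifying the integrations by parts in the low-regularity continuous setting}, where the multiplier $\bar\mu$ is only a measure and $\bar p$ lies in $W^{1,s}_0(\Omega)$ for $s < 2$ rather than $H^1_0(\Omega)$. Concretely one must check: (i) that $v = y - \bar y \in H^1_0(\Omega)\cap H^2(\Omega)$ is an admissible test function in \eqref{equation: 2c}, which is fine by $H^2$-regularity; (ii) that $\int_\Omega \bar p(-\Delta(y-\bar y))\,dx = \int_\Omega \nabla\bar p\cdot\nabla(y-\bar y)\,dx$, which follows from density since $-\Delta(y-\bar y) = (u - \bar u) - (\phi(y) - \phi(\bar y)) \in L^2(\Omega)$ and $\bar p \in W^{1,s}_0$ with $\nabla(y-\bar y) \in L^{s'}$ for the conjugate exponent; and (iii) that the boundary/trace term $\int_K(y-\bar y)\,d\bar\mu$ is well defined and signed via \eqref{inequality: 2c}, which is immediate since $y - \bar y \in C(\bar\Omega)$. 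Once these technical points are dispatched, the algebra leading from \eqref{side calculation: 5} to the formula for $\eta(\alpha,r)$ is word-for-word the same as in Theorem~\ref{theorem: 2}, so no new computation is needed.
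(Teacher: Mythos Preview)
Your proposal is correct and matches the paper's own approach, which simply says ``Arguing in almost the same way as in the proof of Theorem~\ref{theorem: 2}'' without spelling out details. You have correctly identified the only genuine differences from the discrete case: the adjoint equation \eqref{equation: 2c} is written with $\bar p(-\Delta v)$ rather than $\nabla\bar p\cdot\nabla v$, and the multiplier is a measure rather than a finite sum of Diracs; both are handled exactly as you indicate, using $y-\bar y\in H^1_0(\Omega)\cap H^2(\Omega)$ and the strong form $-\Delta(y-\bar y)=(u-\bar u)-(\phi(y)-\phi(\bar y))$ in $L^2(\Omega)$, so that no integration by parts against $\bar p$ is actually required to reach the analogue of \eqref{side calculation: 4}. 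One small wording slip: you write ``legitimate since $\bar p\in H^1_0(\Omega)$'' before correcting to $W^{1,s}_0$; the latter is what is known, and as you observe it suffices (indeed the gradient pairing in your point~(ii) is not even needed once you use the strong state equation directly).
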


\section{Convergence analysis}

Let $(\mathcal T_h)_{0 < h \leq h_0}$ be a quasiuniform sequence of triangulations of $\bar{\Omega}$. We consider the corresponding
sequence of control problems $(\mathbb{P}_h)$ and suppose that $\bar u_h \in U_{ad}$ satisfies the hypotheses of Theorem~\ref{theorem: 2}
uniformly in $0< h \leq h_0$. Thus there exist $\bar{p}_h \in X_{h0}$ and
$(\bar{\mu}_j)_{x_j \in \mathcal N_h}$ satisfying (\ref{equation: 4})-(\ref{inequality: 2}) as well as
\begin{align}
\label{inequality: 18u}
\|\bar p_h\|_{L^q(\Omega)} \leq \eta(\alpha,r), \quad 0< h \leq h_0.
\end{align}
It is convenient to introduce the measure $\bar \mu_h \in \mathcal M(\Omega)$ by
\begin{displaymath}
\bar \mu_h:= \sum_{x_j \in \mathcal N_h} \mu_j \delta_{x_j},
\end{displaymath}
where $\delta_{x_j}$ is the Dirac measure at $x_j$. Since $K \subset \Omega, \mbox{dist}(x_j,K) \leq h, x_j \in \mathcal N_h$ and
$y_a(x) < y_b(x), x \in K$ there exists a compact set $\tilde{K} \subset \Omega$, $\delta>0$ and $0< h_1 \leq h_0$ such that
$K \subset \tilde{K}$ and
\begin{eqnarray}
\mathcal N_h \subset \tilde{K}, & & 0 < h \leq h_1, \label{supportmu} \\
y_a(x) + \delta \leq \frac{1}{2} ( y_a(x) + y_b(x)) \leq y_b(x) -\delta, & & x \in \tilde{K}.
\end{eqnarray}
Applying a smoothing procedure to $x \mapsto \frac{1}{2}( y_a+y_b) \in C_0(\Omega)$ we obtain a function $w \in C^{\infty}_0(\Omega)$
such that
\begin{displaymath}
y_a(x) + \frac{\delta}{2} \leq w(x) \leq y_b(x) - \frac{\delta}{2}, \quad x \in \tilde{K}.
\end{displaymath}
Let us denote by  $R_h: H^1_0(\Omega) \rightarrow X_{h0}$ the Ritz projection defined by
\begin{equation} \label{ritz}
\displaystyle 
\int_{\Omega} \nabla R_h w \cdot \nabla v_h dx = \int_{\Omega} \nabla w \cdot \nabla v_h dx \qquad \forall v_h \in X_{h0}.
\end{equation}
Since $R_h w \rightarrow w$ uniformly in $\bar{\Omega}$, we may assume after choosing $h_1$ smaller if necessary that
\begin{equation} \label{w1}
\displaystyle
y_a(x) + \frac{\delta}{4} \leq R_h w(x) \leq y_b(x) - \frac{\delta}{4}, \qquad x \in \tilde{K}.
\end{equation}
Our first step in the convergence analysis are uniform bounds on the optimal control $\bar u_h$ as well as on $\bar y_h=\mathcal G_h(\bar u_h)$ and
$\bar \mu_h$.

\begin{lemma}
\label{lemma: 7} 
Let $\bar u_h \in U_{ad}, \bar y_h, \bar p_h \in X_{h0}$ and $(\bar \mu_j)_{x_j \in \mathcal N_h}$ be a solution of (\ref{equation: 4})-(\ref{inequality: 2}).
Then there exists a constant $C>0$, which is independent of $h$, such that
\[
\|\bar u_h\|_{L^2(\Omega)},\|\bar y_h\|_{H^1(\Omega)} ,  \|\bar \mu_h\|_{\mathcal{M}(\tilde K)} \leq C. 
\]
\end{lemma}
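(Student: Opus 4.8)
The plan is to derive the three uniform bounds by a chain of energy estimates, using the structure of the first–order system together with the uniform bound \eqref{inequality: 18u} on $\|\bar p_h\|_{L^q(\Omega)}$ and the admissible comparison function $R_h w$ constructed above. The order I would follow is: first bound $\bar\mu_h$, then bound $\bar u_h$ (and thus $\bar y_h$), using that $\bar u_h = P_{[u_a,u_b]}(-\bar p_h/\alpha)$ only when $u_a,u_b$ are finite, and otherwise a separate argument via \eqref{inequality: 1}.

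First I would test the discrete adjoint equation \eqref{equation: 2} with $v_h = \bar p_h$ to get
\[
\|\nabla\bar p_h\|_{L^2(\Omega)}^2 + \int_\Omega \phi'(\bar y_h)\bar p_h^2\,dx = \int_\Omega(\bar y_h - y_0)\bar p_h\,dx + \sum_{x_j\in\mathcal N_h}\bar\mu_j\bar p_h(x_j),
\]
but this mixes $\bar\mu_h$ with $\bar y_h$, so instead I would first extract the sign/total-variation information from \eqref{inequality: 2}. Testing \eqref{equation: 2} with $v_h = R_h w - \bar y_h$ and using \eqref{w1} together with $(\bar y_h(x_j))\in Y^h_{ad}$, the constraint \eqref{inequality: 2} gives $\sum_j\bar\mu_j(R_hw(x_j)-\bar y_h(x_j)) \le 0$; since $R_hw - \bar y_h$ has a definite sign relative to the active bounds up to the margin $\delta/4$, this forces a bound on the total mass of the \emph{positive} and \emph{negative} parts of $\bar\mu_h$, i.e. $\|\bar\mu_h\|_{\mathcal M(\tilde K)} \le C(\delta)\big(1 + \|\bar y_h\|_{L^2} + \|y_0\|_{L^2} + \|\nabla\bar p_h\|_{L^2} + \|\bar p_h\|_{L^2}\big)$, where the right side comes from the remaining terms of \eqref{equation: 2} after testing with $R_hw-\bar y_h$ and using $H^1$-stability of $R_h$ plus $\|\phi'(\bar y_h)\bar p_h\|$ controlled via \eqref{phi1}, Hölder, and the Gagliardo–Nirenberg/Lemma~\ref{lemma: 2} bound together with the uniform $L^q$ bound on $\bar p_h$.

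Next I would bound $\bar u_h$. Testing the state equation \eqref{equation: 4} with $v_h = \bar y_h$ and using monotonicity of $\phi$ ($\phi(s)s \ge \phi(0)s$, say, so $\int_\Omega\phi(\bar y_h)\bar y_h \ge -C\|\bar y_h\|_{L^2}$ by \eqref{phi0} at $s=0$) gives $\|\nabla\bar y_h\|_{L^2}^2 \le \|\bar u_h\|_{L^2}\|\bar y_h\|_{L^2} + C\|\bar y_h\|_{L^2}$, hence $\|\bar y_h\|_{H^1} \le C(1+\|\bar u_h\|_{L^2})$ by Poincaré. It therefore suffices to bound $\|\bar u_h\|_{L^2}$. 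If $-\infty < u_a \le u_b < +\infty$ this is immediate. In general I would use \eqref{inequality: 1}: since $\bar u_h = P_{[u_a,u_b]}(-\bar p_h/\alpha)$, and $\bar p_h\in L^q \subset L^2$ with $\|\bar p_h\|_{L^2}\le |\Omega|^{1/2-1/q}\|\bar p_h\|_{L^q} \le C$ by \eqref{inequality: 18u}, the projection is non-expansive so $\|\bar u_h\|_{L^2} \le \|\bar u_h - P_{[u_a,u_b]}(0)\|_{L^2} + \|P_{[u_a,u_b]}(0)\|_{L^2} \le \frac1\alpha\|\bar p_h\|_{L^2} + C \le C$; the constant term is finite because $0$ is a fixed admissible value or $u_a,u_b$ have at least one finite bound — and when $U_{ad}=L^2(\Omega)$ the projection is the identity and $\|\bar u_h\|_{L^2} = \frac1\alpha\|\bar p_h\|_{L^2}\le C$ directly. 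Combining, $\|\bar y_h\|_{H^1}\le C$, and feeding this back into the $\bar\mu_h$ estimate closes all three bounds.

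The main obstacle I expect is the $\bar\mu_h$ bound: one must convert the one-sided variational inequality \eqref{inequality: 2} into a genuine bound on the total variation $\|\bar\mu_h\|_{\mathcal M(\tilde K)}$ rather than just on a signed pairing. The key is that $w$ (and hence $R_hw$) was chosen to sit \emph{strictly} between $y_a$ and $y_b$ with a fixed gap $\delta/4$ on $\tilde K\supset\mathcal N_h$, so that $R_hw(x_j)-\bar y_h(x_j)$ is bounded below by $\delta/4$ wherever $\bar\mu_j>0$ (the upper constraint is active) and above by $-\delta/4$ wherever $\bar\mu_j<0$; summing over the two sign classes turns $\sum_j\bar\mu_j(R_hw(x_j)-\bar y_h(x_j))\le 0$ into $\frac{\delta}{4}\sum_j|\bar\mu_j| \le \sum_j|\bar\mu_j(R_hw(x_j)-\bar y_h(x_j))|$, and the latter is exactly what the tested adjoint equation controls. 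Everything else is a routine application of $H^1$-stability of the Ritz projection, Hölder's inequality, and the already-established uniform $L^q$ bound on $\bar p_h$.
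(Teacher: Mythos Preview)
Your overall plan is close to the paper's, and the sign/complementarity argument you sketch for turning \eqref{inequality: 2} into a total-variation bound on $\bar\mu_h$ is exactly the right idea (modulo a sign slip: when $\bar\mu_j>0$ the \emph{upper} constraint is active, so $R_hw(x_j)-\bar y_h(x_j)\le -\delta/4$, not $\ge\delta/4$; test with $\bar y_h - R_hw$ to get the signs straight). Your bound on $\|\bar u_h\|_{L^2}$ via the projection formula is a fine alternative to the paper's choice of plugging a fixed $u_0\in U_{ad}$ into \eqref{inequality: 1}.

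There is, however, a genuine gap. When you test \eqref{equation: 2} with $R_hw-\bar y_h$, you write the resulting right-hand side as involving $\|\nabla\bar p_h\|_{L^2}$, and you never bound this quantity. You only have the uniform $L^q$ bound \eqref{inequality: 18u} on $\bar p_h$; there is no reason for $\|\nabla\bar p_h\|_{L^2}$ to stay bounded as $h\to 0$ when state constraints are active (in the continuous limit $\bar p$ lies only in $W^{1,s}_0$ for $s<2$). The paper avoids $\nabla\bar p_h$ entirely by two tricks you are missing: for $\int_\Omega\nabla\bar p_h\cdot\nabla\bar y_h\,dx$ it inserts $v_h=\bar p_h$ into the \emph{state} equation \eqref{equation: 4}, rewriting the term as $\int_\Omega(\bar u_h-\phi(\bar y_h))\bar p_h\,dx$; for $\int_\Omega\nabla\bar p_h\cdot\nabla R_hw\,dx$ it uses the defining property of the Ritz projection together with integration by parts (exploiting $w\in C_0^\infty(\Omega)$) to get $\int_\Omega\bar p_h\,\Delta w\,dx$. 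Both rewritten terms are then controlled using only $\|\bar p_h\|_{L^2}\le C\|\bar p_h\|_{L^q}$. Without these two moves your $\bar\mu_h$ estimate does not close.
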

\begin{proof}
To begin, fix a function $u_0 \in U_{ad}$. Inserting $u_0$ into (\ref{inequality: 1}) we infer
\begin{eqnarray*}
\alpha \Vert \bar u_h \Vert_{L^2(\Omega)}^2 & \leq &  \int_{\Omega} u_0 ( \alpha \bar u_h + \bar p_h) dx - \int_{\Omega} \bar u_h \bar p_h dx \\
& \leq & \Vert u_0 \Vert_{L^2(\Omega)} \bigl( \alpha \Vert \bar u_h \Vert_{L^2(\Omega)} + \Vert \bar p_h \Vert_{L^2(\Omega)} \bigr) + 
\Vert \bar u_h \Vert_{L^2(\Omega)} \Vert \bar p_h \Vert_{L^2(\Omega)}.
\end{eqnarray*}
Since $q=\frac{3r-2}{r-1} \geq 3$ we deduce with the help of (\ref{inequality: 18u})
\begin{displaymath}
\Vert \bar u_h \Vert_{L^2(\Omega)} \leq C \bigl( \Vert u_0 \Vert_{L^2(\Omega)}  + \Vert \bar p_h \Vert_{L^2(\Omega)} \bigr)
 \leq C \bigl( \Vert u_0 \Vert_{L^2(\Omega)}  + \Vert \bar p_h \Vert_{L^q(\Omega)} \bigr) \leq C.
\end{displaymath}
Testing (\ref{equation: 4}) with $\bar y_h$, using the monotonicity of $\phi$ and Poincar\'e's inequality we infer
\begin{equation} \label{h1bound}
\displaystyle
\|\bar y_h\|_{H^1(\Omega)} \leq C \bigl( 1+ \| \bar u_h\|_{L^2(\Omega)} \bigr)  \leq C.
\end{equation}
Furthermore, (\ref{phi1}), (\ref{phi0}) along with the continuous embedding $H^1(\Omega) \hookrightarrow L^t(\Omega)$ for all $1 \leq t < \infty$ yield
\begin{equation}  \label{nonlinearbound}
\displaystyle \Vert \phi(\bar y_h) \Vert_{L^2(\Omega)}, \; \Vert \phi'(\bar y_h) \Vert_{L^2(\Omega)} \leq C.
\end{equation}
In order to verify the uniform boundedness of $\|\bar\mu_h\|_{\mathcal{M}(\tilde{K})}$ we first observe that
(\ref{inequality: 2}) implies 
\begin{displaymath}
\bar y_h(x_j) =
\left\{
\begin{array}{ll}
y_b(x_j), & \mbox{ if } \bar \mu_j >0, \\
y_a(x_j), & \mbox{ if } \bar \mu_j <0.
\end{array}
\right.
\end{displaymath}
As a result we deduce with the help of (\ref{w1})
\begin{displaymath}
\frac{\delta}{4} \Vert \bar \mu_h \Vert_{\mathcal M(\tilde K)} = \frac{\delta}{4} \sum_{x_j \in \mathcal N_h} | \bar \mu_j |
\leq \sum_{x_j \in \mathcal N_h} \bar \mu_j \bigl( \bar y_h(x_j) - R_h w(x_j) \bigr).
\end{displaymath}
Using $v_h = \bar y_h - R_h w$ in (\ref{equation: 2}) we may continue
\begin{eqnarray}
\frac{\delta}{4} \Vert \bar \mu_h \Vert_{\mathcal M(\tilde K)}  & \leq &  \int_{\Omega} \nabla \bar p_h \cdot \nabla \bar y_h dx 
 - \int_{\Omega} \nabla \bar p_h \cdot \nabla R_h w dx \nonumber \\
& & + \int_{\Omega} \phi'(\bar y_h) \bar p_h (\bar y_h-R_h w) dx  - \int_{\Omega} (\bar y_h - y_0)( \bar y_h- R_h w) dx \nonumber  \\
& \equiv & \sum_{i=1}^4 S_i. \label{muest1}
\end{eqnarray}
If we let $v_h = \bar p_h$ in  (\ref{equation: 4}) we obtain with the help of (\ref{nonlinearbound}) and (\ref{inequality: 18u})
\begin{displaymath}
| S_1 |  = | \int_{\Omega} (\bar u_h - \phi(\bar y_h)) \bar p_h dx | \leq
\bigl( \Vert \bar u_h \Vert_{L^2(\Omega)} + \Vert \phi(\bar y_h) \Vert_{L^2(\Omega)} \bigr) \Vert \bar p_h \Vert_{L^2(\Omega)} \leq C.
\end{displaymath}
Next, the definition of the Ritz projection and integration by parts yields
\begin{displaymath}
S_2 =  - \int_{\Omega} \nabla \bar p_h \cdot \nabla  w dx = \int_{\Omega} \bar p_h \Delta w dx
\end{displaymath}
so that
\begin{displaymath}
| S_2 | \leq \Vert \bar p_h \Vert_{L^2(\Omega)} \Vert \Delta w \Vert_{L^2(\Omega)} \leq C.
\end{displaymath}
H\"older's inequality along with (\ref{nonlinearbound}), (\ref{inequality: 18u}) and  (\ref{h1bound}) implies that
\begin{displaymath}
| S_3 | \leq \Vert \phi'(\bar y_h) \Vert_{L^2(\Omega)} \Vert \bar p_h \Vert_{L^q(\Omega)} \Vert \bar y_h - R_h w \Vert_{L^{\frac{2q}{q-2}}(\Omega)}
\leq C \Vert \bar y_h - R_h w \Vert_{H^1(\Omega)} \leq C.
\end{displaymath}
Finally,
\begin{displaymath}
| S_4 | \leq \bigl( \Vert \bar y_h \Vert_{L^2(\Omega)} + \Vert y_0 \Vert_{L^2(\Omega)} \bigr) \bigl( \Vert \bar y_h \Vert_{L^2(\Omega)}
+ \Vert R_h w \Vert_{L^2(\Omega)} \bigr) \leq C.
\end{displaymath}
Inserting the above estimates into (\ref{muest1}) yields the bound on $\Vert \bar \mu_h \Vert_{\mathcal M(\tilde K)}$. 
\end{proof}


\noindent
Now, we are in position to formulate the main theorem in this section:
\begin{theorem}
Suppose that $(\bar u_h, \bar y_h, \bar p_h, \bar \mu_h)_{0<h \leq h_1}$ is a sequence 
satisfying (\ref{equation: 4})-(\ref{inequality: 2}) as well as (\ref{inequality: 18u}). Then
\[
\bar u_h \to \bar u \mbox{ in } L^2(\Omega) \mbox{ for a subsequence } h \to 0,
\]
where $\bar u$ is a global minimum for Problem~$(\mathbb{P})$. If 
\begin{equation} \label{strict}
\displaystyle
\|\bar p_h\|_{L^q(\Omega)} \le \kappa \eta(\alpha,r), \quad 0< h \leq h_1,
\end{equation}
for some $0 < \kappa <1$, then $\bar u$ is the unique global solution of $(\mathbb{P})$ and the whole sequence $( \bar u_h)_{0<h \leq h_1}$ converges 
to $\bar u$. 
\end{theorem}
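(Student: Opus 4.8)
The plan is to pass to the limit in the discrete first order system (\ref{equation: 4})--(\ref{inequality: 2}), to identify the limit as a solution of the continuous system (\ref{equation: 4c})--(\ref{inequality: 2c}) whose adjoint state still obeys $\|\bar p\|_{L^q(\Omega)}\le\eta(\alpha,r)$, and then to invoke Theorem~\ref{theorem: 5}. First I would record that, by Lemma~\ref{lemma: 7} and (\ref{inequality: 18u}), the families $(\bar u_h)$ in $L^2(\Omega)$, $(\bar y_h)$ in $H^1_0(\Omega)$, $(\bar\mu_h)$ in $\mathcal M(\tilde K)$ and $(\bar p_h)$ in $L^q(\Omega)$ are bounded uniformly in $h$. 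Passing to a subsequence I obtain $\bar u_h\rightharpoonup\bar u$ in $L^2(\Omega)$ with $\bar u\in U_{ad}$ (the latter being convex and closed, hence weakly closed), $\bar y_h\rightharpoonup\bar y$ in $H^1_0(\Omega)$ and thus $\bar y_h\to\bar y$ in $L^t(\Omega)$ for every $t<\infty$, $\bar\mu_h\stackrel{*}{\rightharpoonup}\bar\mu$ in $\mathcal M(\tilde K)$, and $\bar p_h\rightharpoonup\bar p$ in $L^q(\Omega)$. Testing (\ref{equation: 4}) with $R_h v$ for $v\in C^\infty_0(\Omega)$, using $R_hv\to v$ in $H^1_0(\Omega)$ and $\phi(\bar y_h)\to\phi(\bar y)$ in $L^2(\Omega)$ (which follows from the growth bound (\ref{phi0}), the strong $L^t$-convergence, and the $L^t$-bounds for $t>2$ giving uniform integrability), one passes to the limit to get $\bar y=\mathcal G(\bar u)$; in particular $\bar y\in H^2(\Omega)\hookrightarrow C(\bar\Omega)$.

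The decisive step is to upgrade the weak convergence of $\bar p_h$ to a strong one. Since $\bar p_h$ is the finite element solution of a linear elliptic equation whose right hand side $(\bar y_h-y_0)-\phi'(\bar y_h)\bar p_h+\bar\mu_h$ is bounded in $L^{s_0}(\Omega)+\mathcal M(\tilde K)$ for some $s_0>1$ (the product $\phi'(\bar y_h)\bar p_h$ being bounded in $L^{s_0}$ by H\"older's inequality together with (\ref{phi1}) and the $L^q$-bound on $\bar p_h$), finite element regularity estimates for elliptic problems with measure data on quasiuniform triangulations yield $\|\bar p_h\|_{W^{1,s}(\Omega)}\le C$ for every $1<s<2$; by compactness of the embedding $W^{1,s}(\Omega)\hookrightarrow L^2(\Omega)$ and after a further subsequence, $\bar p_h\to\bar p$ in $L^2(\Omega)$. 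A similar argument, using $\bar y=\mathcal G(\bar u)\in C(\bar\Omega)$ and $L^\infty$-estimates for the discrete states, gives $\bar y_h\to\bar y$ in $C(\bar\Omega)$. Consequently $\bar u_h=P_{[u_a,u_b]}(-\tfrac{\bar p_h}{\alpha})\to P_{[u_a,u_b]}(-\tfrac{\bar p}{\alpha})=\bar u$ strongly in $L^2(\Omega)$, which is the convergence asserted in the theorem, and the limit satisfies (\ref{inequality: 1c}).

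It remains to pass to the limit in (\ref{equation: 2}) and (\ref{inequality: 2}). Testing (\ref{equation: 2}) with $R_hv$, $v\in H^1_0(\Omega)\cap H^2(\Omega)$, and using $\phi'(\bar y_h)\bar p_h\rightharpoonup\phi'(\bar y)\bar p$ (a product of a strongly $L^t$-convergent and a weakly $L^q$-convergent sequence, with $\phi'(\bar y)\in L^\infty(\Omega)$ since $\bar y$ is bounded) together with $\int_\Omega R_hv\,d\bar\mu_h\to\int_K v\,d\bar\mu$ (because $R_hv\to v$ uniformly and $\mathcal N_h\subset\tilde K$), one obtains (\ref{equation: 2c}); moreover $\mathrm{dist}(x_j,K)\le h$ forces $\mathrm{supp}\,\bar\mu\subset K$. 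Passing to the limit in (\ref{inequality: 2}), with the help of $\bar y_h\to\bar y$ uniformly and an approximation of an arbitrary $z\in Y_{ad}$ by convex combinations with the strictly feasible $w$ of (\ref{w1}), yields (\ref{inequality: 2c}); the same uniform convergence and the discrete feasibility of $\bar y_h$ show $\bar y_{|K}\in Y_{ad}$, since every point of $K$ lies within distance $h$ of $\mathcal N_h$. Hence $(\bar u,\bar y,\bar p,\bar\mu)$ solves (\ref{equation: 4c})--(\ref{inequality: 2c}), and weak lower semicontinuity of the $L^q$-norm gives $\|\bar p\|_{L^q(\Omega)}\le\liminf_{h\to0}\|\bar p_h\|_{L^q(\Omega)}\le\eta(\alpha,r)$. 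By Theorem~\ref{theorem: 5}, $\bar u$ is a global minimum of $(\mathbb{P})$.

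Finally, if (\ref{strict}) holds, then $\|\bar p\|_{L^q(\Omega)}\le\kappa\,\eta(\alpha,r)<\eta(\alpha,r)$, so the strict case of Theorem~\ref{theorem: 5} shows that $\bar u$ is the \emph{unique} global minimum of $(\mathbb{P})$. A routine subsequence argument then upgrades the convergence to the full sequence: every subsequence of $(\bar u_h)_{0<h\le h_1}$ satisfies the same hypotheses, hence has a further subsequence converging in $L^2(\Omega)$ to a global minimum of $(\mathbb{P})$, which must equal $\bar u$; therefore $\bar u_h\to\bar u$ in $L^2(\Omega)$ as $h\to0$. I expect the main obstacle to be the strong $L^2$-convergence of the discrete adjoint states $\bar p_h$, which relies on finite element regularity estimates in $W^{1,s}(\Omega)$, $1<s<2$, for elliptic equations with measure-valued right hand side on quasiuniform triangulations; the passage to the limit in the state-constraint multiplier inequality (\ref{inequality: 2c}) is a secondary technical point, handled via the strictly feasible function $w$.
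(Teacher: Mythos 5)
Your argument is essentially sound and reaches the conclusion along the same skeleton as the paper (weak limits from Lemma~\ref{lemma: 7} and (\ref{inequality: 18u}), identification of the limit as a solution of (\ref{equation: 4c})--(\ref{inequality: 2c}), lower semicontinuity of the $L^q$--norm, Theorem~\ref{theorem: 5}, and the standard subsequence argument for the strict case), but you take a genuinely different route at the decisive step, the strong $L^2(\Omega)$--convergence of the controls. You obtain it by upgrading $\bar p_h$ to strong $L^2$--convergence via uniform $W^{1,s}(\Omega)$, $1<s<2$, bounds for the discrete adjoint equation with measure-valued right hand side (equivalently, $W^{1,s}$--stability of the Ritz projection on quasiuniform meshes) plus compact embedding, and then pass the projection formula $\bar u_h = P_{[u_a,u_b]}(-\bar p_h/\alpha)$ to the limit; this is legitimate under the quasiuniformity assumed in this section, but it imports a nontrivial external finite element regularity result that you do not prove. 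The paper avoids this entirely: it inserts $\bar u$ into the discrete variational inequality (\ref{inequality: 1}), rewrites $\int_\Omega \bar u_h \bar p_h\,dx$ by testing (\ref{equation: 4}) and (\ref{equation: 2}) against each other, and passes to the limit using only the already established weak convergences together with the uniform convergence $\bar y_h \to \bar y$ (proved there via an auxiliary $H^2$--bounded function $y^h$ with $\bar y_h = R_h y^h$, which you only sketch through unspecified $L^\infty$--estimates); this yields $\limsup_h \Vert \bar u_h\Vert_{L^2} \leq \Vert \bar u \Vert_{L^2}$ and hence norm convergence, after which (\ref{inequality: 1c}) follows from strong convergence of $\bar u_h$ and weak convergence of $\bar p_h$. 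In short, your route buys strong $L^2$--convergence of the discrete adjoints (and a slightly more direct proof of (\ref{inequality: 1c})) at the price of heavier discrete regularity machinery, whereas the paper's argument is self-contained, exploiting only the structure of the optimality systems; also note that your appeal to convex combinations with $w$ for (\ref{inequality: 2c}) is unnecessary, since a bound-respecting continuous extension of $z$ to $\tilde K$ already yields admissible discrete test vectors, as in the paper.
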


\begin{proof}
From Lemma~\ref{lemma: 7}, we deduce the existence of a subsequence $h \rightarrow 0$ and $\bar u \in L^2(\Omega)$, $\bar y \in H^1_0(\Omega)$,
$\bar p \in L^q(\Omega)$, $\bar \mu \in \mathcal M(\tilde K)$ such that
\begin{eqnarray}
\bar u_h &\rightharpoonup & \bar u \quad \mbox{ in } L^2(\Omega), \label{uconv} \\
\bar y_h & \rightharpoonup & \bar y \quad \mbox{ in } H^1_0(\Omega) \mbox{ and } \bar y_h \rightarrow \bar y \mbox{ in } L^t(\Omega), 1 \leq t < \infty. 
\label{yconv} \\
\bar\mu_h &\rightharpoonup & \bar\mu \quad \mbox{ in } \mathcal{M}(\tilde K), \label{muconv} \\
\bar p_h & \rightharpoonup & \bar p \quad \mbox{ in } L^q(\Omega), \label{pconv} 
\end{eqnarray}
Our aim is to show that $(\bar u,\bar y, \bar p, \bar \mu)$ is a solution of (\ref{equation: 4c})-(\ref{inequality: 2c}).
It is easy to see that $\bar u \in U_{ad}$ and that $\bar y =\mathcal{G}(\bar u)$, so that (\ref{equation: 4c}) is satisfied. Furthermore, the fact that $\mbox{dist}(x_j,K) \leq h, x_j \in \mathcal N_h$ implies that
$\mbox{supp}(\bar \mu) \subset K$. Combining this with the bound $\Vert \bar \mu_h \Vert_{\mathcal M(\tilde K)} \leq C$ we infer that
\begin{equation} \label{dualconv}
\displaystyle \int_{\tilde K} z^h d \bar \mu_h \rightarrow \int_K z d \bar \mu \quad \mbox{ as } h \rightarrow 0 
\end{equation} 
for every sequence $(z^h)_{0 < h \leq h_1} \subset C(\tilde{K})$ converging uniformly to $z$ on $\tilde{K}$.
Next, we claim that
\begin{equation} \label{uniform}
\displaystyle
\bar y_h \rightarrow \bar y \qquad \mbox{ uniformly in } \bar{\Omega}.
\end{equation}
To see this, denote by $y^h \in H^2(\Omega) \cap H^1_0(\Omega)$ the solution of
\begin{displaymath}
- \Delta y^h = \bar u_h - \phi(\bar y_h) \; \mbox{ in } \Omega, \quad y^h = 0 \; \mbox{ on } \partial \Omega.
\end{displaymath}
We deduce from Lemma \ref{lemma: 7} and (\ref{nonlinearbound}) that  $(y^h)_{0<h \leq h_1}$ is bounded in $H^2(\Omega)$, so that there exists a further subsequence and a function $\hat{y} \in H^2(\Omega) \cap H^1_0(\Omega)$ with
\begin{displaymath}
y^h \rightharpoonup \hat{y} \mbox{ in }  H^2(\Omega), \quad y^h \rightarrow \hat{y} \mbox{ in } C(\bar{\Omega}).
\end{displaymath}
Since $\bar u_h - \phi(\bar y_h) \rightharpoonup \bar u - \phi(\bar y)$ in $L^2(\Omega)$ we find that $- \Delta \hat{y} = - \Delta \bar y$ a.e. in
$\Omega$. Hence $\hat{y} = \bar y$ and  $y^h \rightarrow \bar y$ in $C(\bar{\Omega})$. On the other hand, the definition of $y^h$ implies that $\bar y_h = R_h y^h$, so that
 standard interpolation and inverse estimates imply
\begin{eqnarray*}
\Vert \bar y_h - \bar y \Vert_{L^{\infty}(\Omega)} & \leq & \Vert R_h y^h - y^h \Vert_{L^{\infty}(\Omega)} +  \Vert y^h - \bar y \Vert_{L^{\infty}(\Omega)} \\
& \leq &  C h \Vert y^h \Vert_{H^2(\Omega)} + \Vert y^h - \bar y \Vert_{L^{\infty}(\Omega)} \rightarrow 0 \quad \mbox{ as } h \rightarrow 0,
\end{eqnarray*}
since $\Vert y^h \Vert_{H^2(\Omega)} \leq C$. This proves (\ref{uniform}). \\
Let us check that $\bar y_{|K} \in Y_{ad}$. For a fixed point $x \in K$ we can choose a sequence $(x_{j_h})_{0 <h \leq h_1}$ such that
$x_{j_h} \in \mathcal N_h$ and $| x_{j_h}-x| \leq h$. Since $y_a(x_{j_h}) \leq \bar y_h(x_{j_h}) \leq y_b(x_{j_h})$ we obtain 
$y_a(x) \leq \bar y(x) \leq y_b(x)$ by passing to the limit $h \rightarrow 0$ and
using (\ref{uniform}). \\
Next, let us fix  $z \in Y_{ad}$ and extend $z$ to a function $\tilde z \in C(\tilde K)$ satisfying $y_a(x) \leq \tilde z(x) \leq y_b(x), x \in \tilde K$. We obtain from (\ref{inequality: 2}), (\ref{dualconv}) and (\ref{uniform})
\begin{displaymath}
0 \geq \sum_{x_j \in \mathcal N_h} \bar \mu_j (\tilde z(x_j) - \bar y_h(x_j)) = \int_{\tilde K} (\tilde z - \bar y_h) d \bar \mu_h \rightarrow
\int_K (z - \bar y) d \bar \mu,
\end{displaymath}
which yields (\ref{inequality: 2c}). \\
In order to  derive (\ref{equation: 2c}) we fix $v \in H^2(\Omega)\cap H^1_0(\Omega)$ and insert $v_h = R_h v$ into (\ref{equation: 2}), i.e.
\begin{displaymath}
\int_\Omega \nabla \bar p_h \cdot \nabla R_h v + \phi'(\bar y_h)\bar p_h R_h v \, dx =  \int_\Omega (\bar y_h -y_0) R_h v \, dx  + 
\int_{\tilde K} R_h v \, d \bar \mu_h. 
\end{displaymath}
Using the definition of $R_h$ and integration by parts we may write
\begin{displaymath}
\int_\Omega \nabla \bar p_h \cdot \nabla R_h v \, dx = \int_{\Omega} \nabla \bar p_h \cdot \nabla v \, dx = \int_{\Omega} \bar p_h (- \Delta v) \, dx
\end{displaymath}
so that (\ref{equation: 2c}) follows from passing to the limit $h \rightarrow 0$ taking into account (\ref{pconv}), (\ref{uniform}) and (\ref{dualconv}). \\
Our next goal is to show that $\bar u_h \rightarrow \bar u$ in $L^2(\Omega)$. Inserting $\bar u$ into (\ref{inequality: 1}) and rearranging we infer
\begin{equation}  \label{uhbound}
\displaystyle
\alpha \Vert \bar u_h \Vert_{L^2(\Omega)}^2  \leq   \int_{\Omega} \bar u ( \alpha \bar u_h + \bar p_h) dx - \int_{\Omega} \bar u_h \bar p_h dx.
\end{equation}
The second integral can be rewritten with the help of (\ref{equation: 4}) and (\ref{equation: 2}), namely
\begin{eqnarray*}
\lefteqn{
\int_{\Omega} \bar u_h \bar p_h dx  =  \int_{\Omega} \nabla \bar y_h \cdot \nabla \bar p_h dx + \int_{\Omega} \phi(\bar y_h) \bar p_h dx }  \\
& = & - \int_{\Omega} \phi'(\bar y_h) \bar p_h \bar y_h dx + \int_{\Omega} (\bar y_h - y_0) \bar y_h dx + \int_{\tilde K} \bar y_h d \bar \mu_h  
+ \int_{\Omega} \phi(\bar y_h) \bar p_h dx.
\end{eqnarray*}
This relation allows us to pass to the limit in a similar way as above to give
\begin{eqnarray*}
\lefteqn{ \hspace{-2cm}  \int_{\Omega} \bar u_h \bar p_h dx 
 \rightarrow   - \int_{\Omega} \phi'(\bar y) \bar p \bar y dx + \int_{\Omega} (\bar y - y_0) \bar y dx + \int_K \bar y d \bar \mu  
+ \int_{\Omega} \phi(\bar y) \bar p dx  } \\
& = & \int_{\Omega} (- \Delta \bar y) \bar p dx  + \int_{\Omega} \phi(\bar y) \bar p dx = \int_{\Omega} \bar u \bar p dx,
\end{eqnarray*}
where we used (\ref{equation: 2c}) and the fact that $\bar y = \mathcal G(\bar u)$. We can now pass to the limit
in (\ref{uhbound}) and deduce that
\begin{displaymath}
\limsup_{h \rightarrow 0} \Vert \bar u_h \Vert_{L^2(\Omega)}^2 \leq \Vert \bar u \Vert_{L^2(\Omega)}^2.
\end{displaymath}
Since $\Vert \bar u \Vert_{L^2(\Omega)}^2 \leq \liminf_{h \rightarrow 0} \Vert \bar u_h \Vert_{L^2(\Omega)}^2$ we infer that
$\Vert \bar u_h \Vert_{L^2(\Omega)} \rightarrow \Vert \bar u \Vert_{L^2(\Omega)}$, 
which together with the fact that $\bar u_h \rightharpoonup \bar u$ in $L^2(\Omega)$ implies that 
$\bar u_h \rightarrow \bar u$ in $L^2(\Omega)$. \\
Combining this with the  weak  convergence $\bar p_h \rightharpoonup  \bar p$ in $L^2(\Omega)$, one can pass to the limit in \eqref{inequality: 1} to obtain 
\begin{equation}
\label{inequality: 17}
\int_\Omega (\bar p + \alpha \bar u)(u-\bar u) \, dx \geq 0 \quad \forall u \in U_{ad},
\end{equation} 
which is (\ref{inequality: 1c}).
In conclusion we see that $(\bar u, \bar y, \bar p, \bar \mu)$ is a solution of 
(\ref{equation: 4c})-(\ref{inequality: 2c}). Furthermore, the lower semicontinuity of the $L^q$--norm implies that 
\begin{displaymath}
\|\bar p \|_{L^q(\Omega)} \leq \liminf_{h \rightarrow 0} \|\bar p_h\|_{L^q(\Omega)} \le  \eta(\alpha,r)
\end{displaymath}
and we infer from Theorem~\ref{theorem: 5}, that $\bar u$ is a global minimum of Problem~$(\mathbb{P})$. If (\ref{strict}) holds,
then $\bar p$ satisfies $\Vert \bar p \Vert_{L^q(\Omega)} \leq \kappa \eta(\alpha,r) < \eta(\alpha,r)$ and $\bar u$ is the unique
global minimum of  $(\mathbb{P})$. A standard argument then shows that the whole sequence $(\bar u_h)_{0 < h \leq h_1}$ converges
to $\bar u$.
\end{proof}
\noindent
Before we go to the numerical examples, we make the following general remarks.
\begin{remark} \mbox{}

\noindent
1. We do not require a constraint qualification  such as a Slater condition to deduce that $(\bar u, \bar y, \bar p, \bar \mu)$ satisfies the system 
(\ref{equation: 4c})-(\ref{inequality: 2c}), which represents the first order necessary optimality conditions for Problem~$(\mathbb{P})$.\\[2mm]
2. 
It is well known that  (\ref{equation: 4})-(\ref{inequality: 2}) can be rewritten equivalently as a system of semi-smooth equations and thus can be solved by a semi-smooth Newton method, see for instance \cite{deckelnick2007finite}, \cite{hintermuller2002primal}, \cite{ulbrich2002semismooth}. In particular, we can
avoid the  use of relaxation methods such as Moreau-Yosida relaxation, interior point methods, or Lavrentiev-type regularization. \\[2mm]
3.
Since we solve (\ref{equation: 4})-(\ref{inequality: 2}) in practice on the computer, we consider $\bar u_h$ a global minimum if the inequality \eqref{inequality: 18} is satisfied up to machine precision. Here, the integral $\|\bar p_h\|_{L^q}$ on the left hand side of this inequality is assumed to be calculated exactly. However, this assumption can be achieved easily whenever $q$ is an integer because in this case the function $|\bar p_h|^q$ restricted to every triangle in the mesh is a  (possibly piecewise) polynomial of degree $q$. Hence, one can use an appropriate quadrature rule to evaluate such an integral exactly. \\[2mm]
\end{remark}

\section{Numerical Examples}
In this section we consider variational discretization of the optimal control problem $(\mathbb{P})$ for different choices of the nonlinearity $\phi$ and the data
$y_0,u_a,u_b,y_a,y_b,\alpha$, while $\Omega:=(0,1) \times (0,1)$ is kept fixed in all considered examples.
For the desired state $y_0$ we consider the following two choices
\begin{align*}
\textbf{A1 :} \quad y_0(x)&:= 2\sin(2 \pi x_1)\sin(2\pi x_2), \\
\textbf{A2 :} \quad y_0(x)&:=60+160(x_1(x_1-1)+x_2(x_2-1)).
\end{align*}
We note that in choice~\textbf{A1} the desired state $y_0$ vanishes on the boundary $\partial\Omega$ of the domain, while in choice~\textbf{A2} it doesn't, see Figure~\ref{figure: desired state choiceA1 and choiceA2}.  The numerical solution of the corresponding systems (\ref{equation: 4})-(\ref{inequality: 2}) is performed with the semismooth Newton method proposed in \cite{deckelnick2007finite}, whose extension to the treatment of finite element approximations of semilinear PDEs ist straightforward. All the computations are performed on a uniform triangulation of $\bar\Omega$ with mesh size $h=2^{-5}\sqrt{2}$.

\begin{figure}[p]
        \centering
        \begin{subfigure}[h!]{0.5\textwidth}
                \includegraphics[trim = 40mm 80mm 30mm 70mm, clip, width=\textwidth]{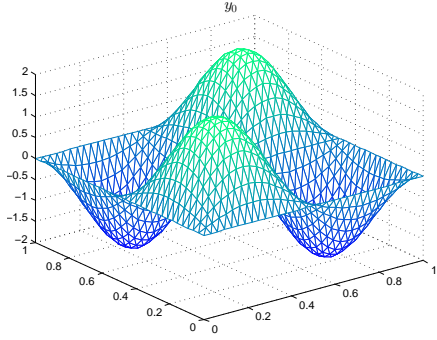}
                \caption{$y_0$ choice \textbf{A1}.}
        \end{subfigure}%
        ~ 
        \begin{subfigure}[h!]{0.5\textwidth}
                \includegraphics[trim = 40mm 80mm 30mm 70mm, clip, width=\textwidth]{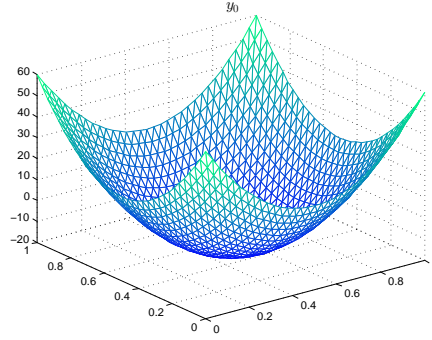}
                \caption{$y_0$ choice \textbf{A2}.}
        \end{subfigure}
        
        \caption{The desired state $y_0$ choices  \textbf{A1} and \textbf{A2}.}
        \label{figure: desired state choiceA1 and choiceA2}
\end{figure}

\begin{example}
\label{example: y power 3}
In this example we define $\phi(s):=s^3$. It is easy to see that this nonlinearity satisfies Assumption 1 with $r=2$ and $M=2\sqrt{3}$. Hence, in view of Theorem~\ref{theorem: 2} we have $q=4$ and a control $\bar u_h$ obtained from solving (\ref{equation: 4})-(\ref{inequality: 2}) is a global minimum if the associated adjoint state $\bar p_h$ satisfies 
\[
\|\bar p_h\|_{L^4(\Omega)} \leq  5^{-\frac{5}{8}} 3^{\frac{3}{8}} \sqrt{2} C^{-1}_4 \alpha^{\frac{3}{8}},
\]
where $C^{-1}_4 \approx 1.543145399297809$ is the constant from Lemma~\ref{lemma: 2}. For this example we consider the following three cases. Let us
abbreviate
\[
\eta(\alpha):= \eta(\alpha,2)= 5^{-\frac{5}{8}} 3^{\frac{3}{8}} \sqrt{2} C^{-1}_4 \alpha^{\frac{3}{8}}.
\]

\noindent
\textbf{Case~1} (unconstrained problem)
In this case we set
\begin{align*}
u_b &=-u_a=\infty, \\
y_b &=-y_a=\infty. \\
\end{align*}
In Table~\ref{table: example y3 case(unconstrained) choiceA1} we provide the values of $\|\bar p_h\|_{L^4} $, $\eta(\alpha)$ and $J(\bar u_h)$ for different values of $\alpha$ where  we consider the choice~\textbf{A1} for the desired state $y_0$. The findings are illustrated graphically in Figure~\ref{figure: example y3 case(unconstrained) choiceA1}. We see that for all values of $\alpha$ we can claim that $\bar u_h$ is a global minimum since $\|\bar p_h\|_{L^4} $ is less than its corresponding  $\eta(\alpha)$. On the other hand, if we consider the choice~\textbf{A2} for $y_0$ we can claim $\bar u_h$ is a global minimum only for approximately $\alpha$ greater than $10^{-2}$ as it  can be seen from Figure~\ref{figure: example y3 case(unconstrained) choiceA2}. The numerical values are provided in Table~\ref{table: example y3 case(unconstrained) choiceA2}.

\begin{table}[p]
\caption{Example~\ref{example: y power 3}  Case~1 with choice~\textbf{A1} for $y_0$: The values of  $\|\bar p_h\|_{L^4} $, $\eta(\alpha)$ and $J(\bar u_h)$ for different values of $\alpha$.}
\label{table: example y3 case(unconstrained) choiceA1}
\begin{tabular}{ l  c  c  c}
\toprule
$\alpha$ &	  $\|\bar p_h\|_{L^4}$&        $\eta(\alpha)$ &   $J(\bar u_h)$  \\
\midrule[1pt]

1.0e-06 &	  9.990654861172e-05 &	 	 6.776197632762e-03 &	 	 3.344560044987e-03   \\
1.0e-05 &	  9.328604940252e-04 &	 	 1.606889689070e-02 &	 	 3.128947575776e-02   \\
1.0e-04 &	  5.916313713912e-03 &	 	 3.810535956559e-02 &	 	 1.967337721757e-01   \\
1.0e-03 &	  1.322797500856e-02 &	 	 9.036204771862e-02 &	 	 4.320833160546e-01   \\
1.0e-02 &	  1.509224717529e-02 &	 	 2.142821839497e-01 &	 	 4.922544738762e-01   \\
1.0e-01 &	  1.530600543072e-02 &	 	 5.081431366100e-01 &	 	 4.992144829702e-01   \\
1.0e+00 &	  1.532768796263e-02 &	 	 1.204997272869e+00 &	 	 4.999213370332e-01   \\
1.0e+01 &	  1.532985932323e-02 &	 	 2.857498848277e+00 &	 	 4.999921325890e-01   \\
1.0e+02 &	  1.533007649041e-02 &	 	 6.776197632762e+00 &	 	 4.999992132478e-01   \\
1.0e+03 &	  1.533009820744e-02 &	 	 1.606889689070e+01 &	 	 4.999999213247e-01   \\

\bottomrule 
\end{tabular}
\end{table}

\begin{figure}[p]
        \centering
        \begin{subfigure}[h!]{0.5\textwidth}
                \includegraphics[trim = 40mm 80mm 30mm 70mm, clip, width=\textwidth]{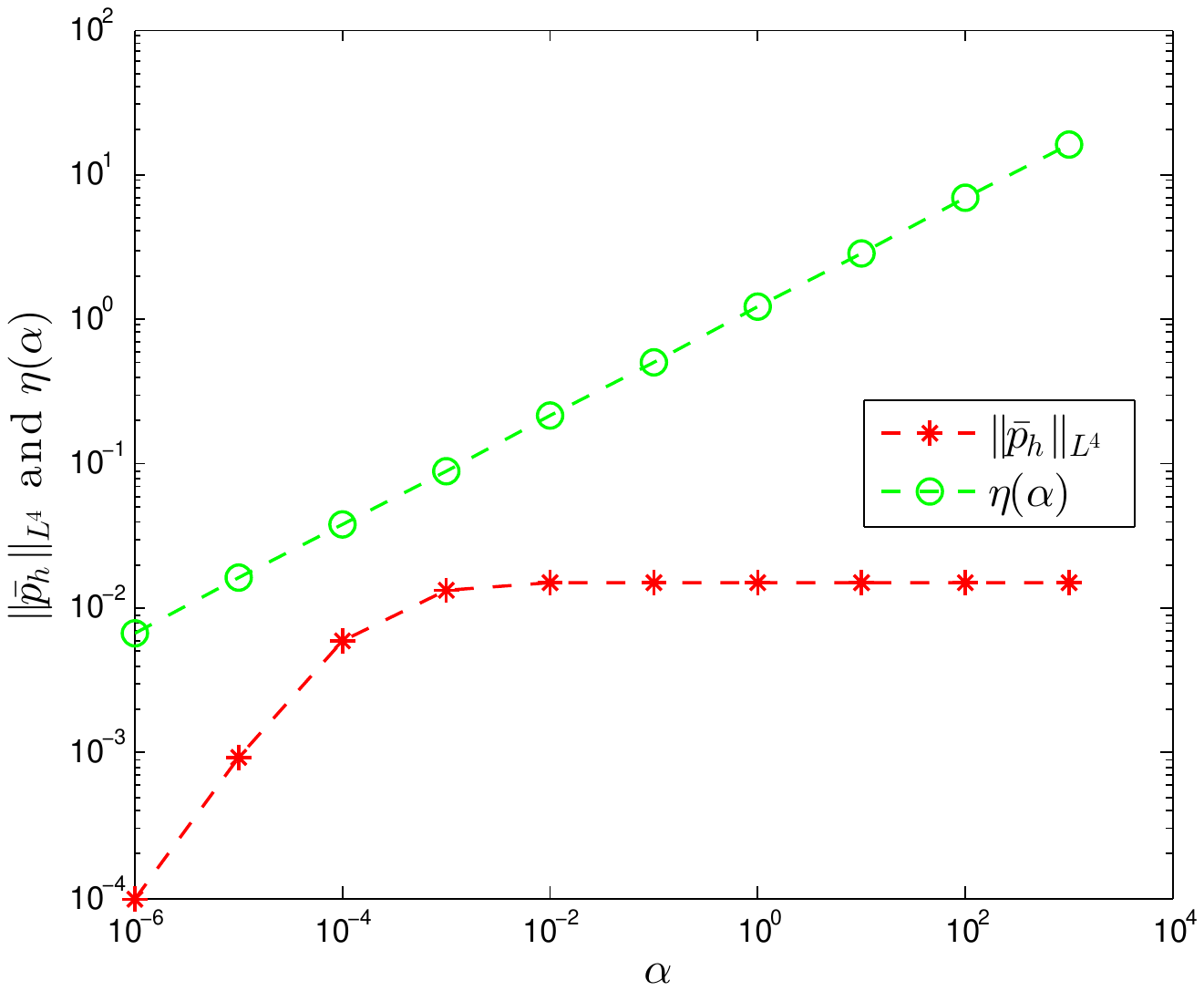}
                \caption{$\|\bar p_h\|_{L^4}$ and $\eta(\alpha)$ vs. $\alpha$.}
        \end{subfigure}%
        ~ 
        \begin{subfigure}[h!]{0.5\textwidth}
                \includegraphics[trim = 35mm 80mm 30mm 70mm, clip, width=\textwidth]{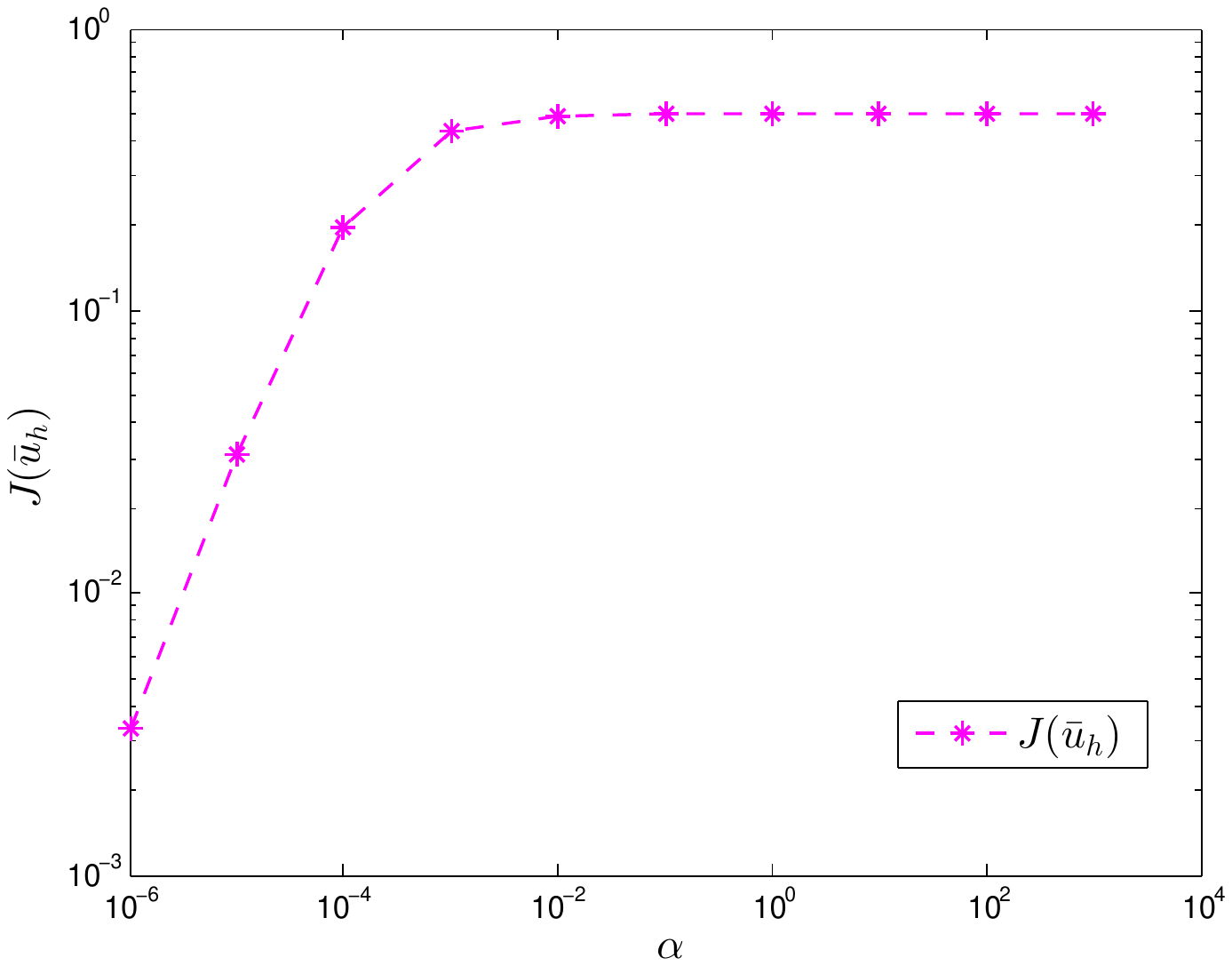}
                \caption{$J(\bar u_h)$  vs. $\alpha$.}
        \end{subfigure}
        
        \begin{subfigure}[h!]{0.5\textwidth}
                  \includegraphics[trim = 40mm 80mm 30mm 70mm, clip, width=\textwidth]{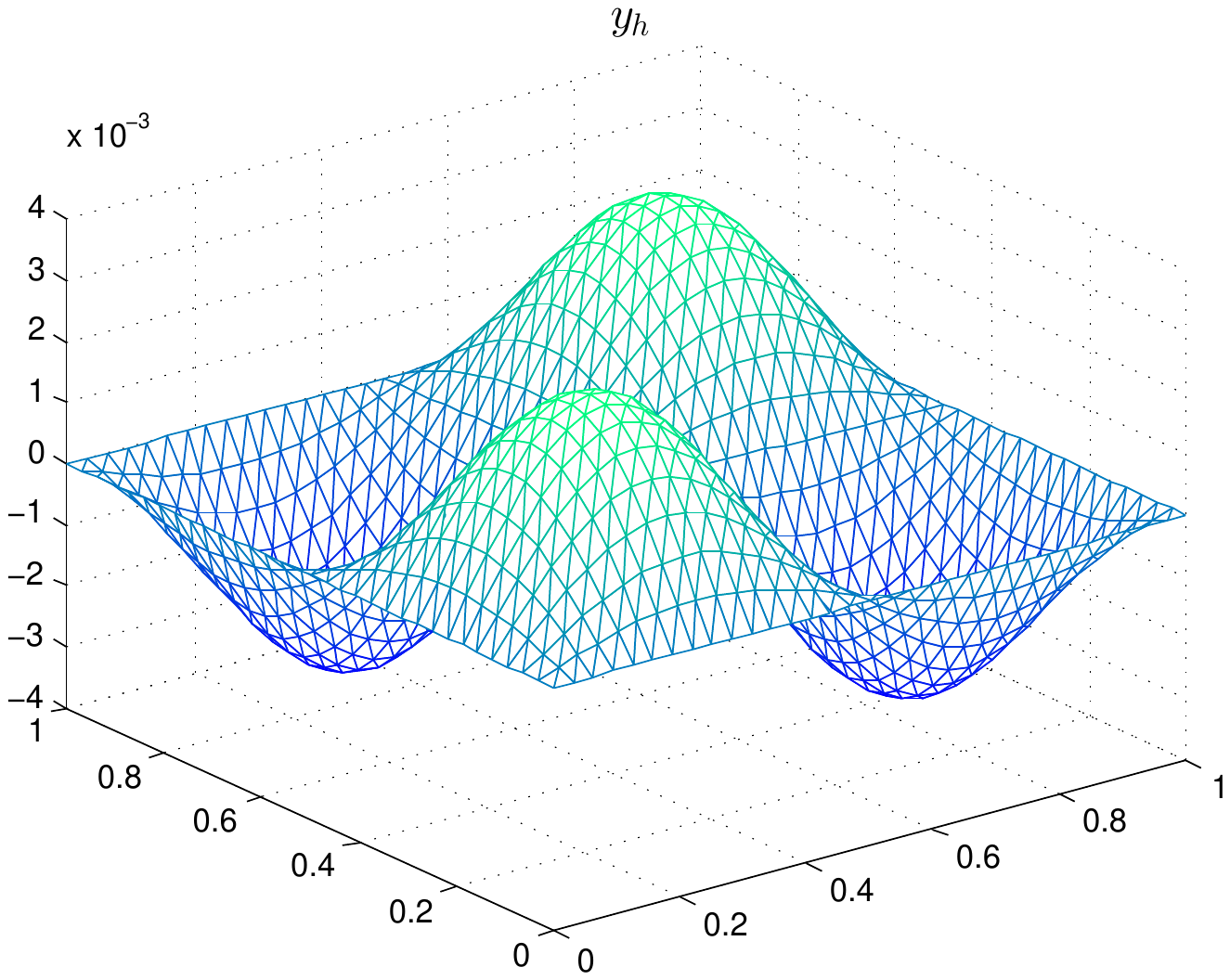}
                \caption{The optimal state $\bar y_h$.}
        \end{subfigure}~
        \begin{subfigure}[h!]{0.5\textwidth}
                  \includegraphics[trim = 40mm 80mm 30mm 70mm, clip, width=\textwidth]{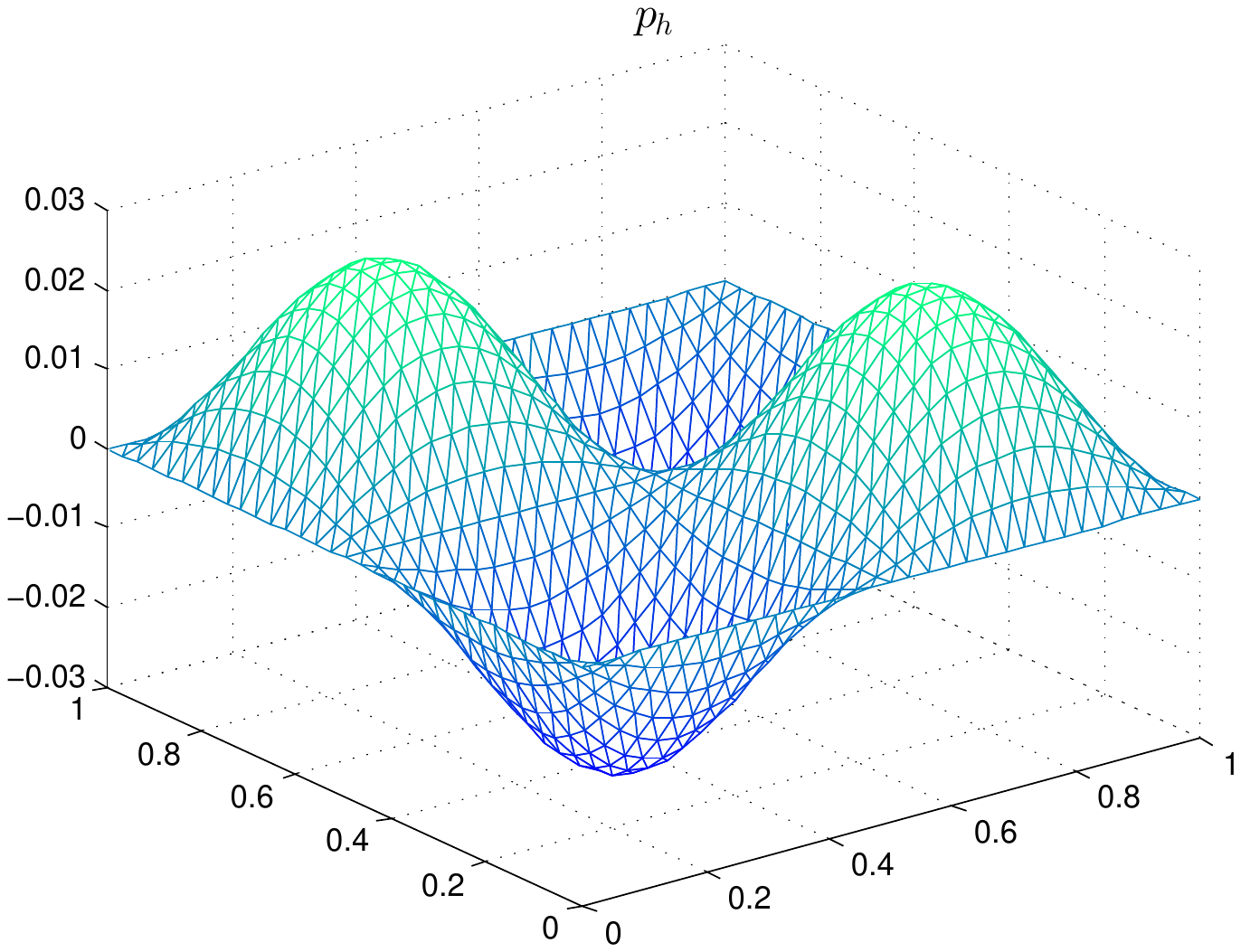}
                \caption{The adjoint state $\bar p_h$.}
        \end{subfigure}
        
        \begin{subfigure}[h!]{0.5\textwidth}
                  \includegraphics[trim = 40mm 80mm 30mm 70mm, clip, width=\textwidth]{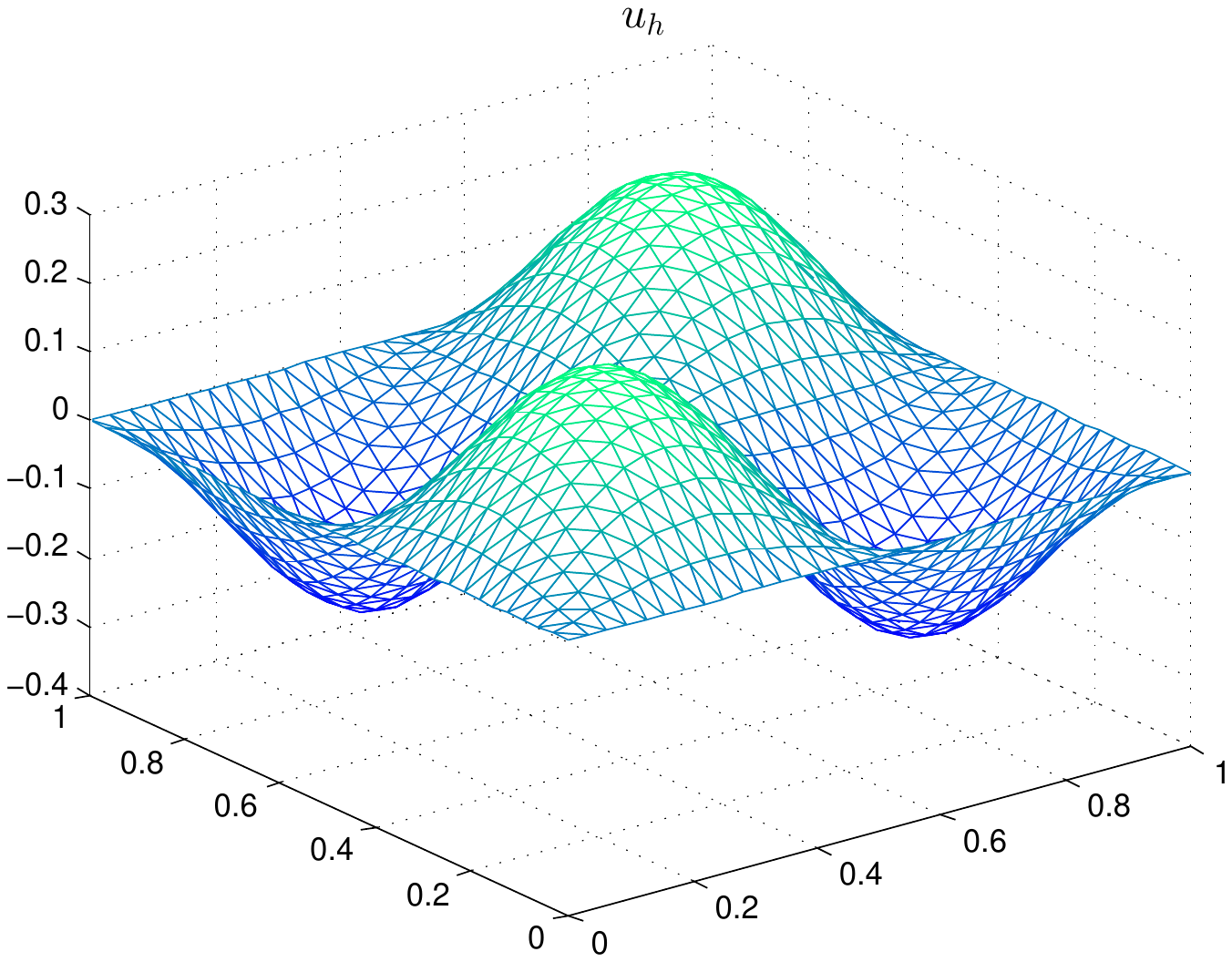}
                \caption{The optimal control $\bar u_h$.}
        \end{subfigure}

        \caption{Example~\ref{example: y power 3} Case~1 with choice~\textbf{A1} for $y_0$: The values of $\|\bar p_h\|_{L^4}$, $\eta(\alpha)$ and $J(\bar u_h)$ vs. $\alpha$. The optimal state $\bar y_h$, the optimal control $\bar u_h$ and the adjoint state $\bar p_h$ for $\alpha=10^{-1}$.}
        \label{figure: example y3 case(unconstrained) choiceA1}
\end{figure}

\begin{table}[p]
\caption{Example~\ref{example: y power 3} Case~1 with choice~\textbf{A2} for $y_0$: The values of  $\|\bar p_h\|_{L^4} $, $\eta(\alpha)$ and $J(\bar u_h)$ for different values of $\alpha$.}
\label{table: example y3 case(unconstrained) choiceA2}
\begin{tabular}{ l  c  c  c}
\toprule
$\alpha$ &	  $\|\bar p_h\|_{L^4}$&        $\eta(\alpha)$ &   $J(\bar u_h)$  \\
\midrule[1pt]

1.0e-06 &	  7.823778739727e-03 &	 	 6.776197632762e-03 &	 	 7.227759688190e+01   \\ 
1.0e-05 &	  2.234541300612e-02 &	 	 1.606889689070e-02 &	 	 1.065710637346e+02   \\
1.0e-04 &	  5.805844706415e-02 &	 	 3.810535956559e-02 &	 	 1.386316936362e+02   \\
1.0e-03 &	  1.125576598202e-01 &	 	 9.036204771862e-02 &	 	 1.568821491955e+02   \\
1.0e-02 &	  2.290137136719e-01 &	 	 2.142821839497e-01 &	 	 1.625724420922e+02   \\
1.0e-01 &	  2.997603240217e-01 &	 	 5.081431366100e-01 &	 	 1.642031427088e+02   \\
1.0e+00 &	  3.061090377257e-01 &	 	 1.204997272869e+00 &	 	 1.644198126030e+02   \\
1.0e+01 &	  3.066635772733e-01 &	 	 2.857498848277e+00 &	 	 1.644419766418e+02   \\
1.0e+02 &	  3.067181181971e-01 &	 	 6.776197632762e+00 &	 	 1.644441976184e+02   \\
1.0e+03 &	  3.067235630566e-01 &	 	 1.606889689070e+01 &	 	 1.644444197614e+02   \\

\bottomrule 
\end{tabular}
\end{table}

\begin{figure}[p]
        \centering
        \begin{subfigure}[h!]{0.5\textwidth}
                \includegraphics[trim = 40mm 80mm 30mm 70mm, clip, width=\textwidth]{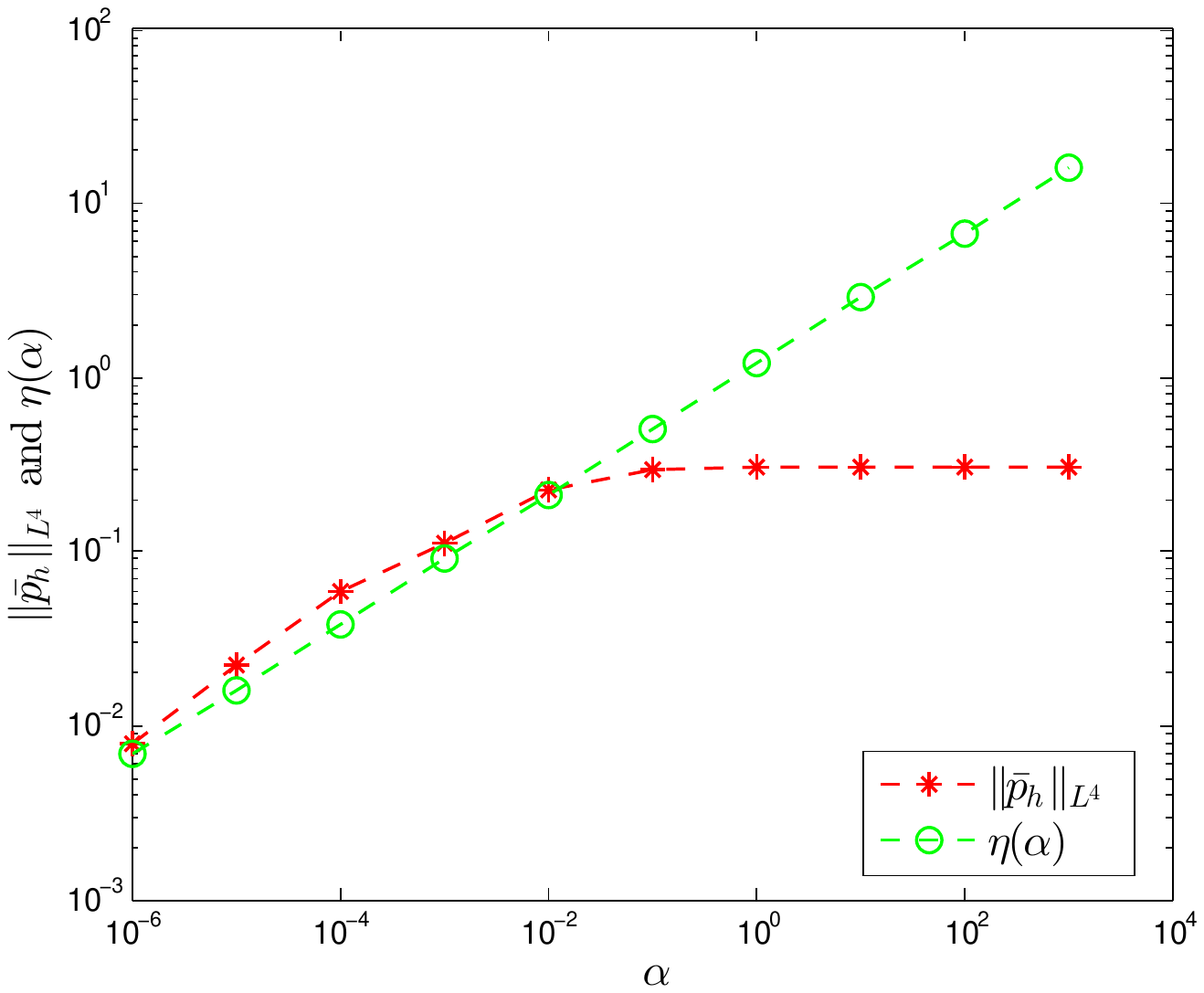}
                \caption{$\|\bar p_h\|_{L^4}$ and $\eta(\alpha)$ vs. $\alpha$.}
        \end{subfigure}%
        ~ 
        \begin{subfigure}[h!]{0.5\textwidth}
                \includegraphics[trim = 35mm 80mm 30mm 70mm, clip, width=\textwidth]{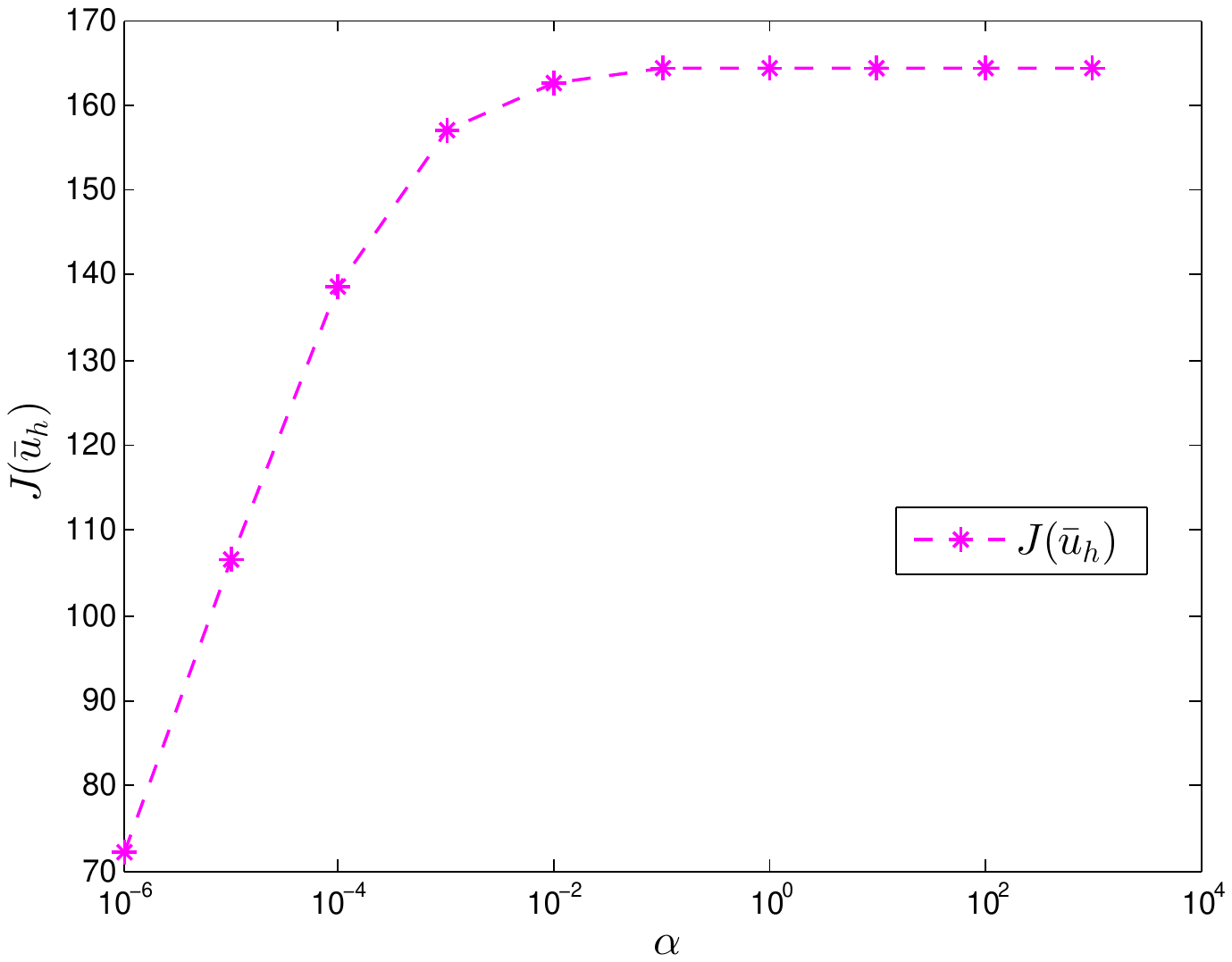}
                \caption{$J(\bar u_h)$  vs. $\alpha$.}
        \end{subfigure}
        
        \begin{subfigure}[h!]{0.5\textwidth}
                \includegraphics[trim = 40mm 80mm 30mm 70mm, clip, width=\textwidth]{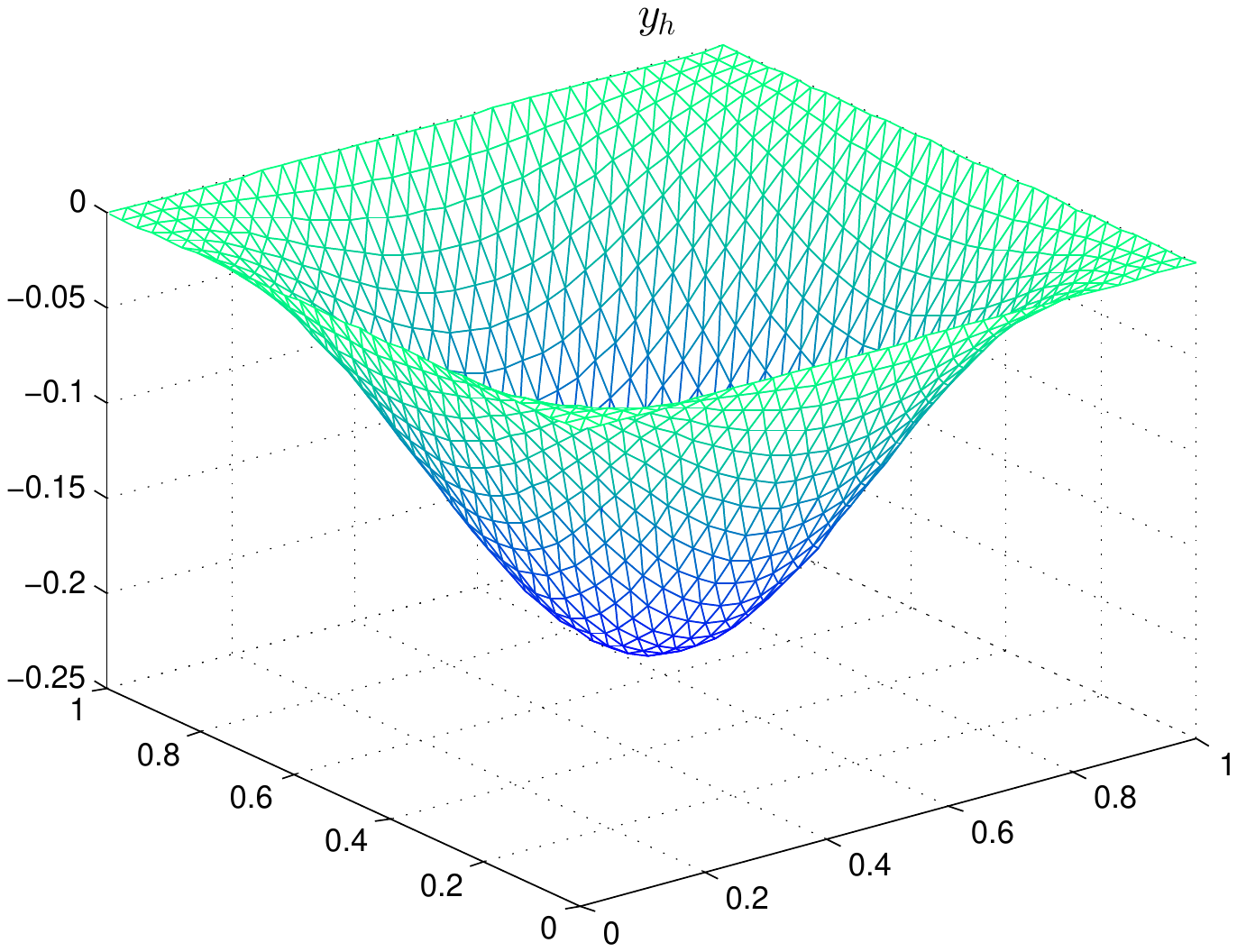}
                \caption{The optimal state $\bar y_h$.}
        \end{subfigure}~
        \begin{subfigure}[h!]{0.5\textwidth}
                \includegraphics[trim = 40mm 80mm 30mm 70mm, clip, width=\textwidth]{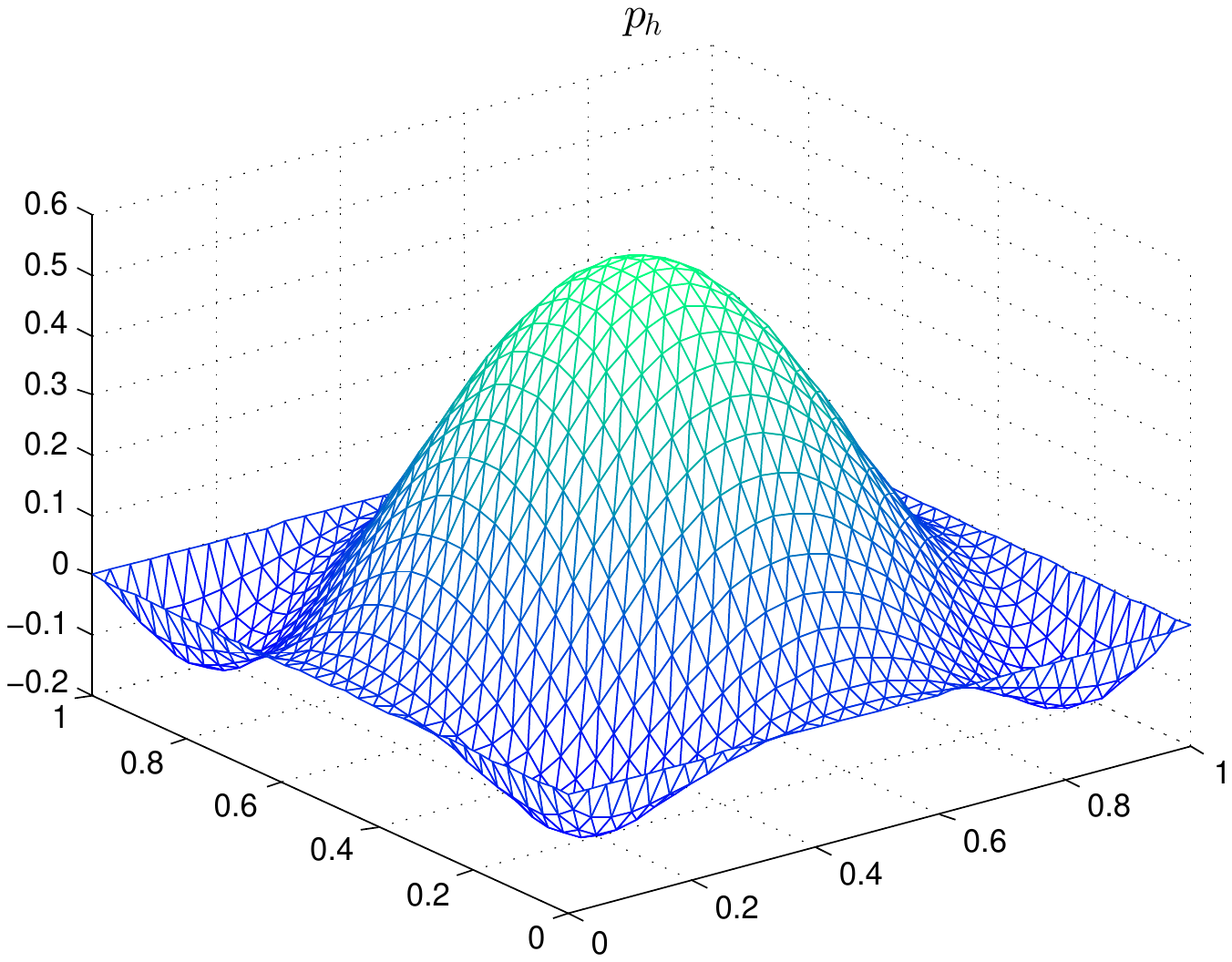}
                \caption{The adjoint state $\bar p_h$.}
        \end{subfigure}
        \begin{subfigure}[h!]{0.5\textwidth}
                \includegraphics[trim = 40mm 80mm 30mm 70mm, clip, width=\textwidth]{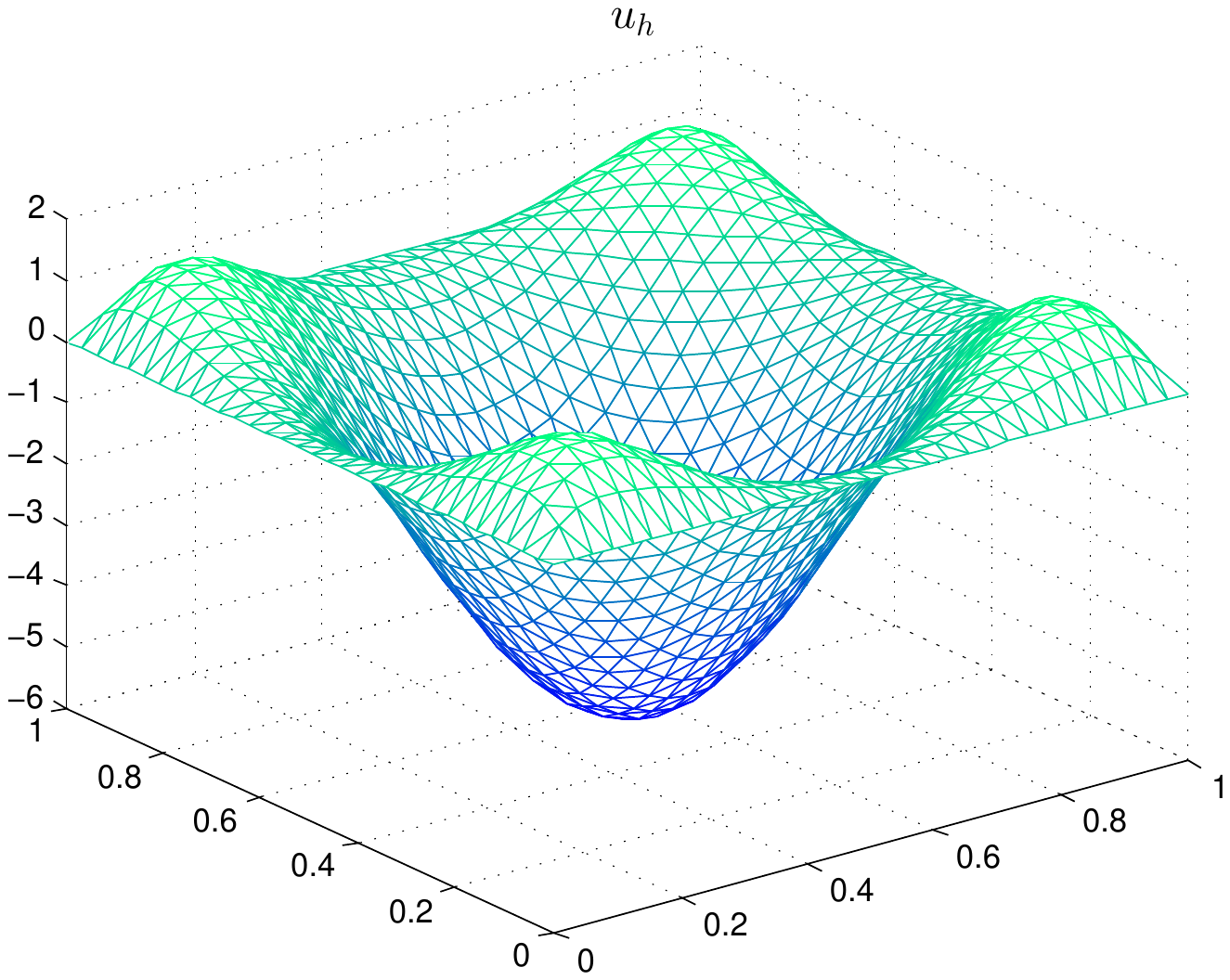}
                \caption{The optimal control $\bar u_h$.}
        \end{subfigure}
        
        \caption{Example~\ref{example: y power 3} Case~1 with choice~\textbf{A2} for $y_0$: The values of $\|\bar p_h\|_{L^4}$, $\eta(\alpha)$ and $J(\bar u_h)$ vs. $\alpha$. The optimal state $\bar y_h$, the optimal control $\bar u_h$ and the adjoint state $\bar p_h$ for $\alpha=10^{-1}$.}
        \label{figure: example y3 case(unconstrained) choiceA2}
\end{figure}


\noindent
\textbf{Case~2} (constrained control)
In this case we consider constraints only on the control, we set
\begin{align*}
u_a & = -5, \\
u_b & = 5, \\
y_b &=-y_a=\infty.
\end{align*}
Table~\ref{table: example y3 case(constrained control) choiceA1} shows the values of  $\|\bar p_h\|_{L^4} $, $\eta(\alpha)$ and $J(\bar u_h)$ computed for different values of $\alpha$ while considering the choice~\textbf{A1} for $y_0$. The graphical illustration of these findings are shown in Figure~\ref{figure: example y3 case(constrained control) choiceA1}. We see that $\bar u_h$ is a global minimum for $\alpha$ approximately greater than $10^{-5}$. The numerical results associated with the choice~\textbf{A2} are given in Table~\ref{table: example y3 case(constrained control) choiceA2} and illustrated in Figure~\ref{figure: example y3 case(constrained control) choiceA2}. In this case $\bar u_h$ is a global minimum for $\alpha$ approximately greater than $10^{-1}$.

\begin{table}[p]
\caption{Example~\ref{example: y power 3} Case~2 with choice~\textbf{A1} for $y_0$: The values of  $\|\bar p_h\|_{L^4} $, $\eta(\alpha)$ and $J(\bar u_h)$ for different values of $\alpha$.}
\label{table: example y3 case(constrained control) choiceA1}
\begin{tabular}{ l  c  c  c}
\toprule
$\alpha$ &	  $\|\bar p_h\|_{L^4}$&        $\eta(\alpha)$ &   $J(\bar u_h)$  \\
\midrule[1pt]

1.0e-06 &	  1.455724773650e-02 &	 	 6.776197632762e-03 &	 	 4.507886038196e-01   \\ 
1.0e-05 &	  1.455724403855e-02 &	 	 1.606889689070e-02 &	 	 4.508916148391e-01   \\
1.0e-04 &	  1.455717724977e-02 &	 	 3.810535956559e-02 &	 	 4.519082323790e-01   \\
1.0e-03 &	  1.457338622672e-02 &	 	 9.036204771862e-02 &	 	 4.612690393001e-01   \\
1.0e-02 &	  1.509224717529e-02 &	 	 2.142821839497e-01 &	 	 4.922544738762e-01   \\
1.0e-01 &	  1.530600543072e-02 &	 	 5.081431366100e-01 &	 	 4.992144829702e-01   \\
1.0e+00 &	  1.532768796263e-02 &	 	 1.204997272869e+00 &	 	 4.999213370332e-01   \\
1.0e+01 &	  1.532985932323e-02 &	 	 2.857498848277e+00 &	 	 4.999921325890e-01   \\
1.0e+02 &	  1.533007649041e-02 &	 	 6.776197632762e+00 &	 	 4.999992132478e-01   \\
1.0e+03 &	  1.533009820744e-02 &	 	 1.606889689070e+01 &	 	 4.999999213247e-01   \\

\bottomrule 
\end{tabular}
\end{table}

\begin{figure}[p]
        \centering
        \begin{subfigure}[h!]{0.5\textwidth}
                \includegraphics[trim = 40mm 80mm 30mm 70mm, clip, width=\textwidth]{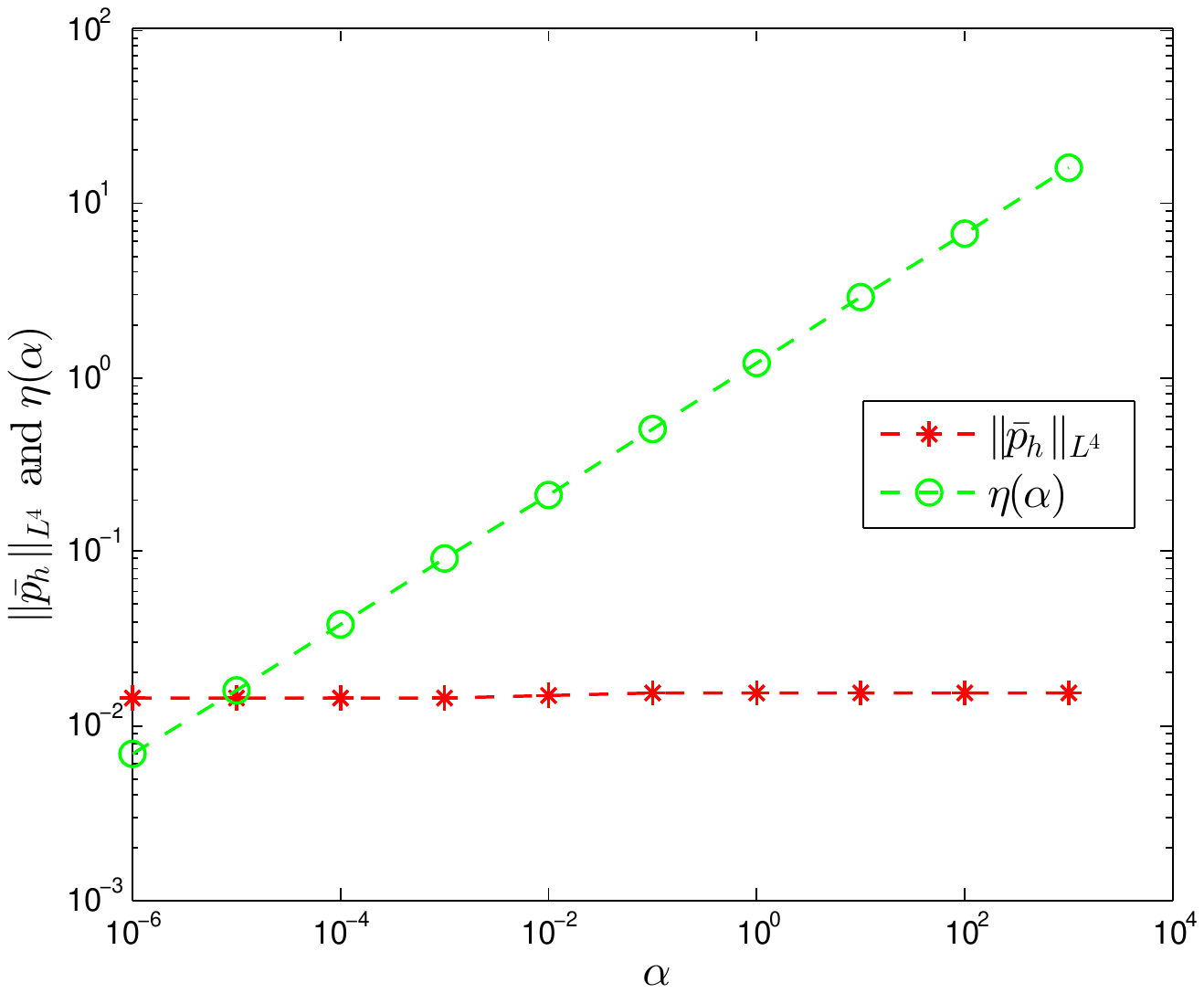}
                \caption{$\|\bar p_h\|_{L^4}$ and $\eta(\alpha)$ vs. $\alpha$.}
        \end{subfigure}%
        ~ 
        \begin{subfigure}[h!]{0.5\textwidth}
                \includegraphics[trim = 30mm 80mm 30mm 70mm, clip, width=\textwidth]{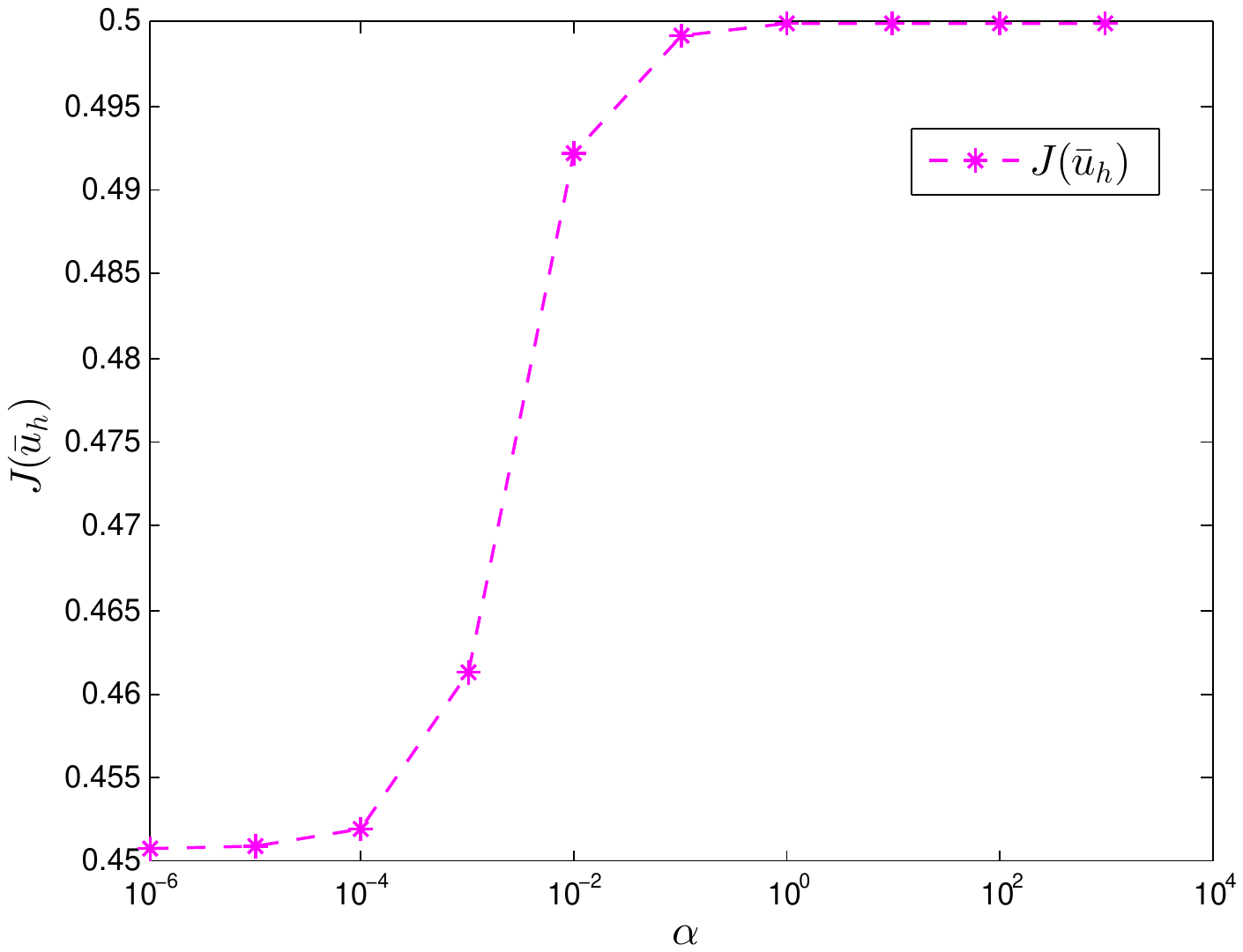}
                \caption{$J(\bar u_h)$  vs. $\alpha$.}
        \end{subfigure}
        
        \begin{subfigure}[h!]{0.5\textwidth}
                \includegraphics[trim = 40mm 80mm 30mm 70mm, clip, width=\textwidth]{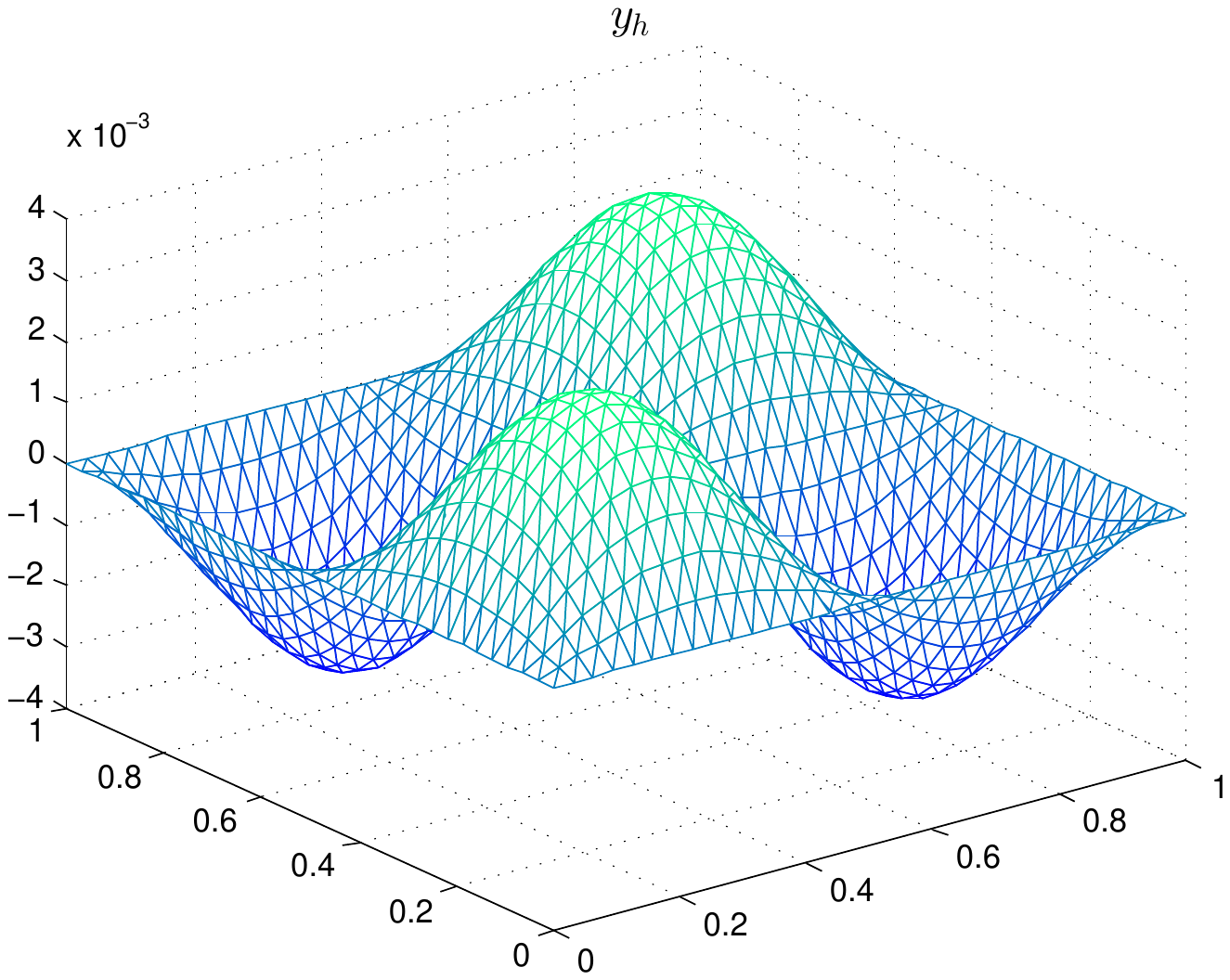}
                \caption{The optimal state $\bar y_h$.}
        \end{subfigure}~
        \begin{subfigure}[h!]{0.5\textwidth}
                \includegraphics[trim = 40mm 80mm 30mm 70mm, clip, width=\textwidth]{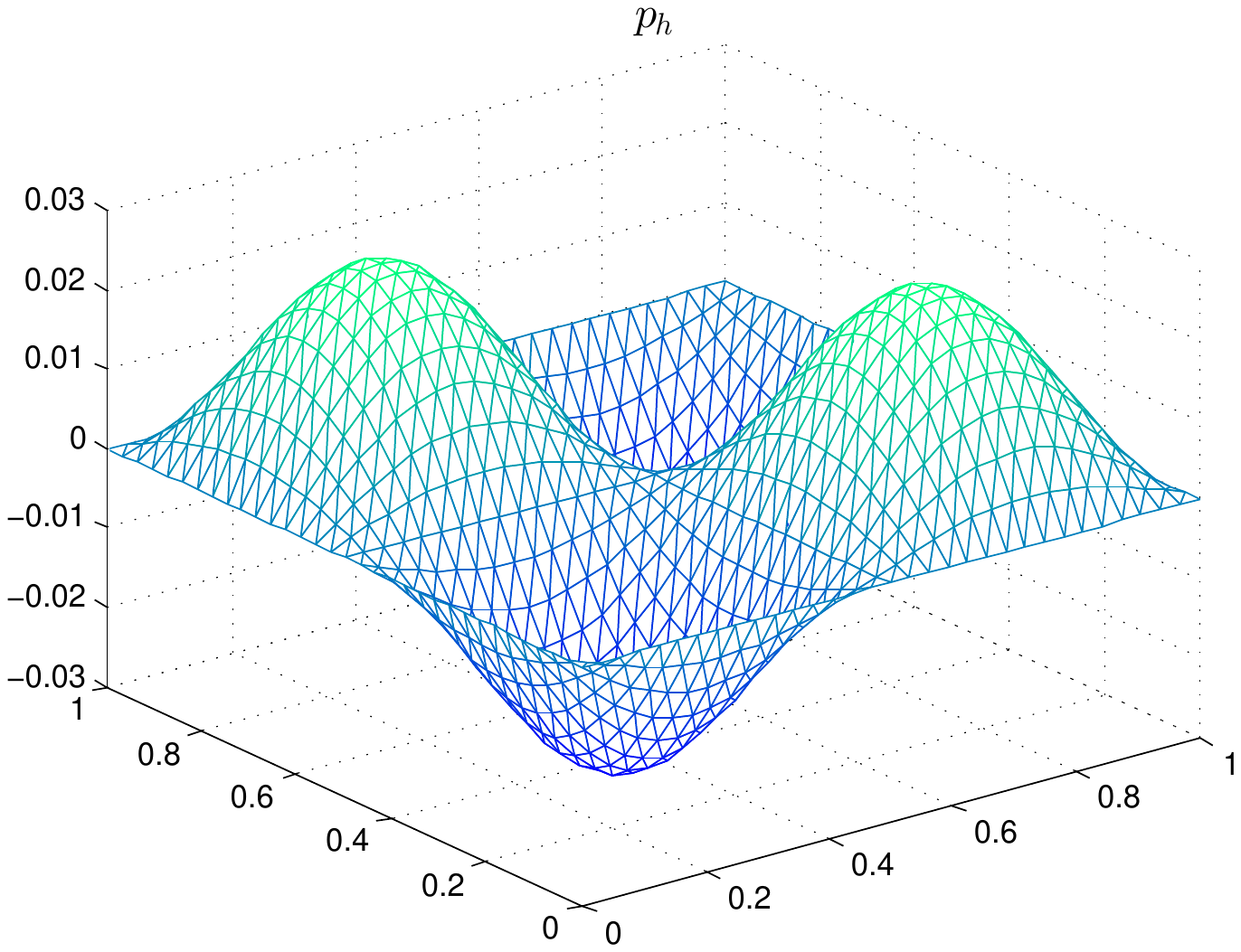}
                \caption{The adjoint state $\bar p_h$.}
        \end{subfigure}
        
        \begin{subfigure}[h!]{0.5\textwidth}
                \includegraphics[trim = 40mm 80mm 30mm 70mm, clip, width=\textwidth]{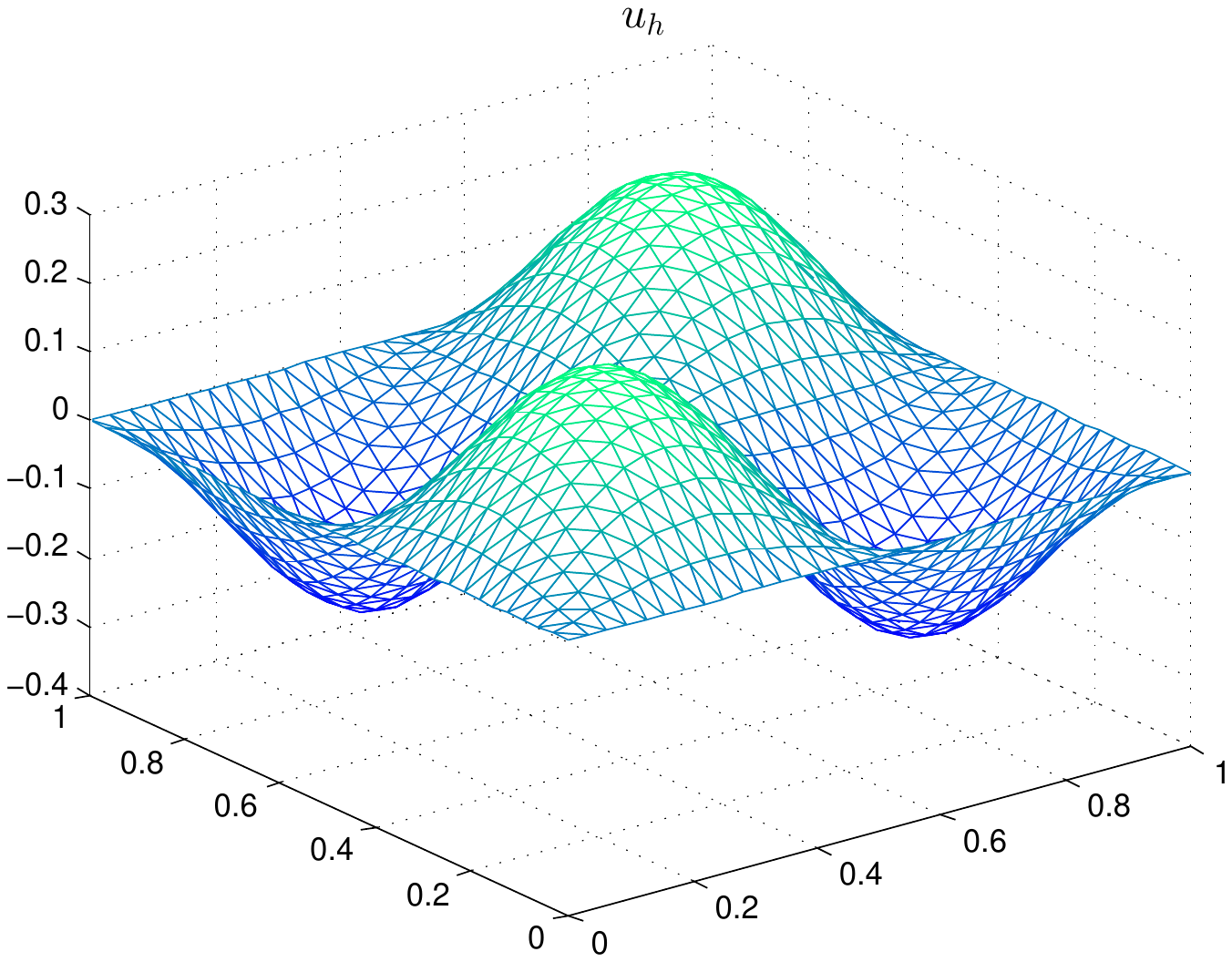}
                \caption{The optimal control $\bar u_h$.}
        \end{subfigure}

        \caption{Example~\ref{example: y power 3} Case~2 with choice~\textbf{A1} for $y_0$: The values of $\|\bar p_h\|_{L^4}$, $\eta(\alpha)$ and $J(\bar u_h)$ vs. $\alpha$. The optimal state $\bar y_h$, the optimal control $\bar u_h$ and the adjoint state $\bar p_h$ for $\alpha=10^{-1}$.}
        \label{figure: example y3 case(constrained control) choiceA1}
\end{figure}

\begin{table}[p]
\caption{Example~\ref{example: y power 3} Case~2 with choice~\textbf{A2} for $y_0$: The values of  $\|\bar p_h\|_{L^4} $, $\eta(\alpha)$ and $J(\bar u_h)$ for different values of $\alpha$.}
\label{table: example y3 case(constrained control) choiceA2}
\begin{tabular}{ l  c  c  c}
\toprule
$\alpha$ &	  $\|\bar p_h\|_{L^4}$&        $\eta(\alpha)$ &   $J(\bar u_h)$  \\
\midrule[1pt]

1.0e-06 &	  2.954513493743e-01 &	 	 6.776197632762e-03 &	 	 1.636849171437e+02   \\ 
1.0e-05 &	  2.954513728927e-01 &	 	 1.606889689070e-02 &	 	 1.636850204333e+02   \\
1.0e-04 &	  2.954526968135e-01 &	 	 3.810535956559e-02 &	 	 1.636860509190e+02   \\
1.0e-03 &	  2.954464960067e-01 &	 	 9.036204771862e-02 &	 	 1.636961799251e+02   \\
1.0e-02 &	  2.955530339094e-01 &	 	 2.142821839497e-01 &	 	 1.637871978058e+02   \\
1.0e-01 &	  2.998739300063e-01 &	 	 5.081431366100e-01 &	 	 1.642034478360e+02   \\
1.0e+00 &	  3.061090377257e-01 &	 	 1.204997272869e+00 &	 	 1.644198126030e+02   \\
1.0e+01 &	  3.066635772733e-01 &	 	 2.857498848277e+00 &	 	 1.644419766418e+02   \\
1.0e+02 &	  3.067181181971e-01 &	 	 6.776197632762e+00 &	 	 1.644441976184e+02   \\
1.0e+03 &	  3.067235630566e-01 &	 	 1.606889689070e+01 &	 	 1.644444197614e+02   \\

\bottomrule 
\end{tabular}
\end{table}

\begin{figure}[p]
        \centering
        \begin{subfigure}[h!]{0.5\textwidth}
                \includegraphics[trim = 40mm 80mm 30mm 70mm, clip, width=\textwidth]{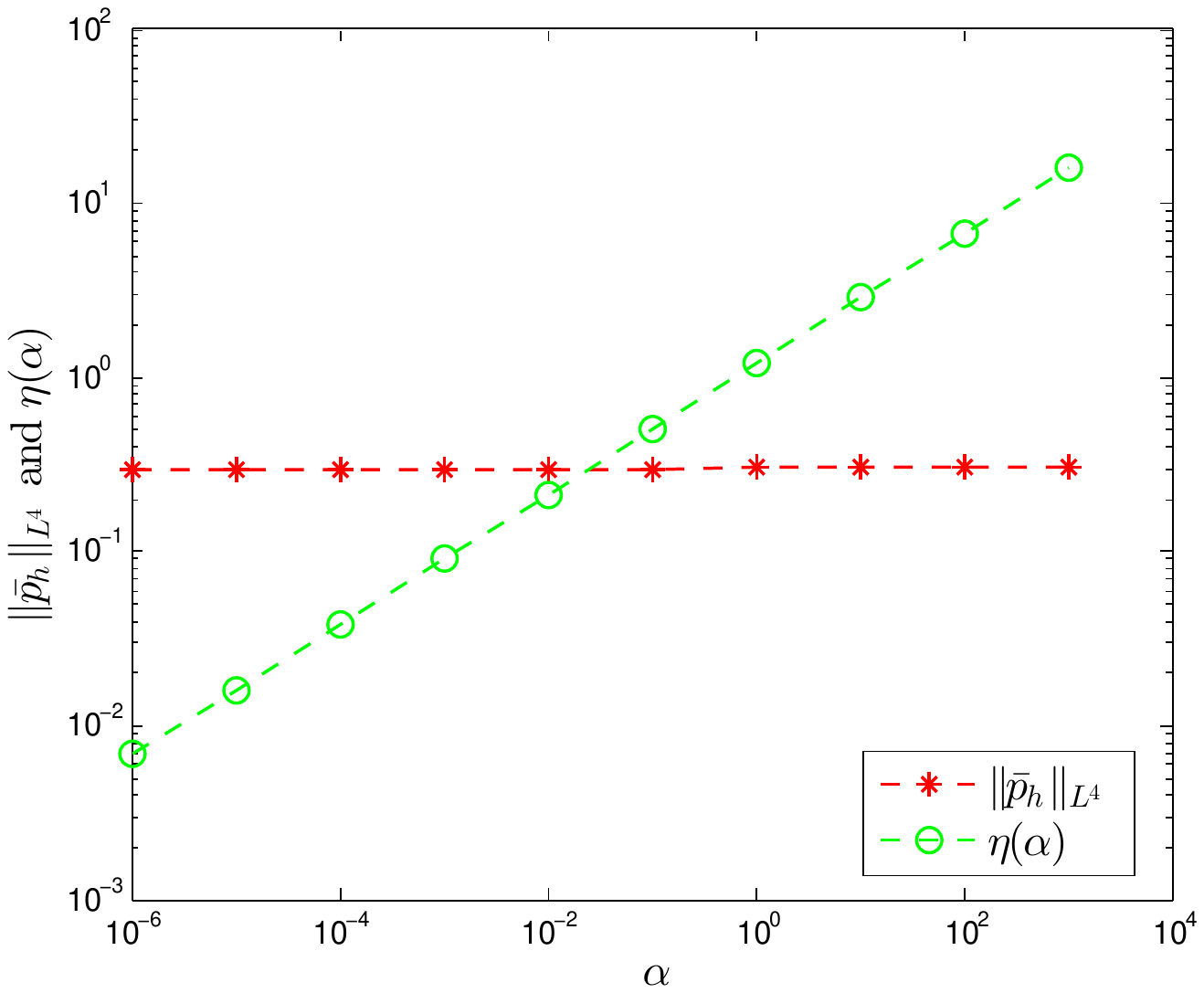}
                \caption{$\|\bar p_h\|_{L^4}$ and $\eta(\alpha)$ vs. $\alpha$.}
        \end{subfigure}%
        ~ 
        \begin{subfigure}[h!]{0.5\textwidth}
                \includegraphics[trim = 30mm 80mm 30mm 70mm, clip, width=\textwidth]{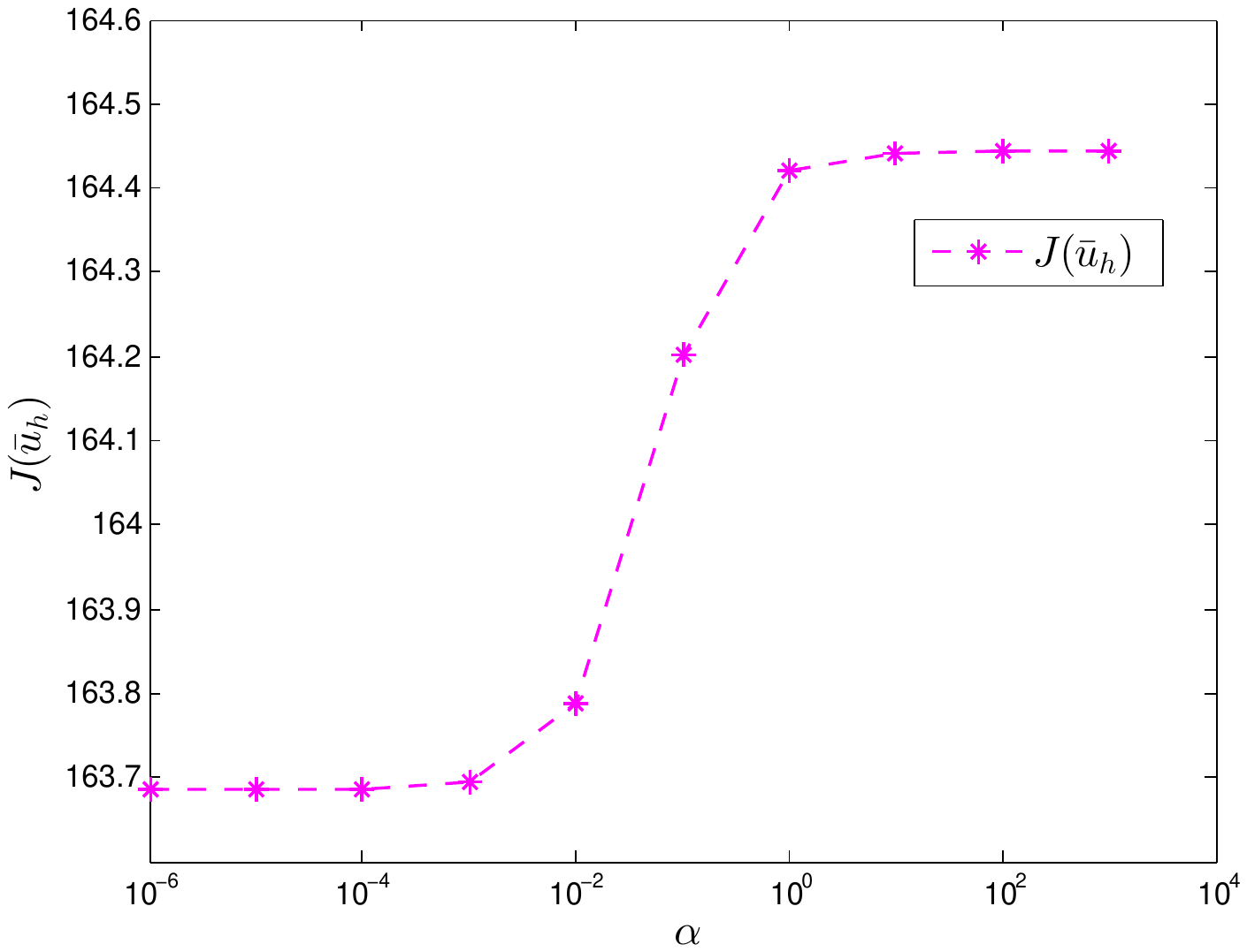}
                \caption{$J(\bar u_h)$  vs. $\alpha$.}
        \end{subfigure}
        
        \begin{subfigure}[h!]{0.5\textwidth}
                \includegraphics[trim = 40mm 80mm 30mm 70mm, clip, width=\textwidth]{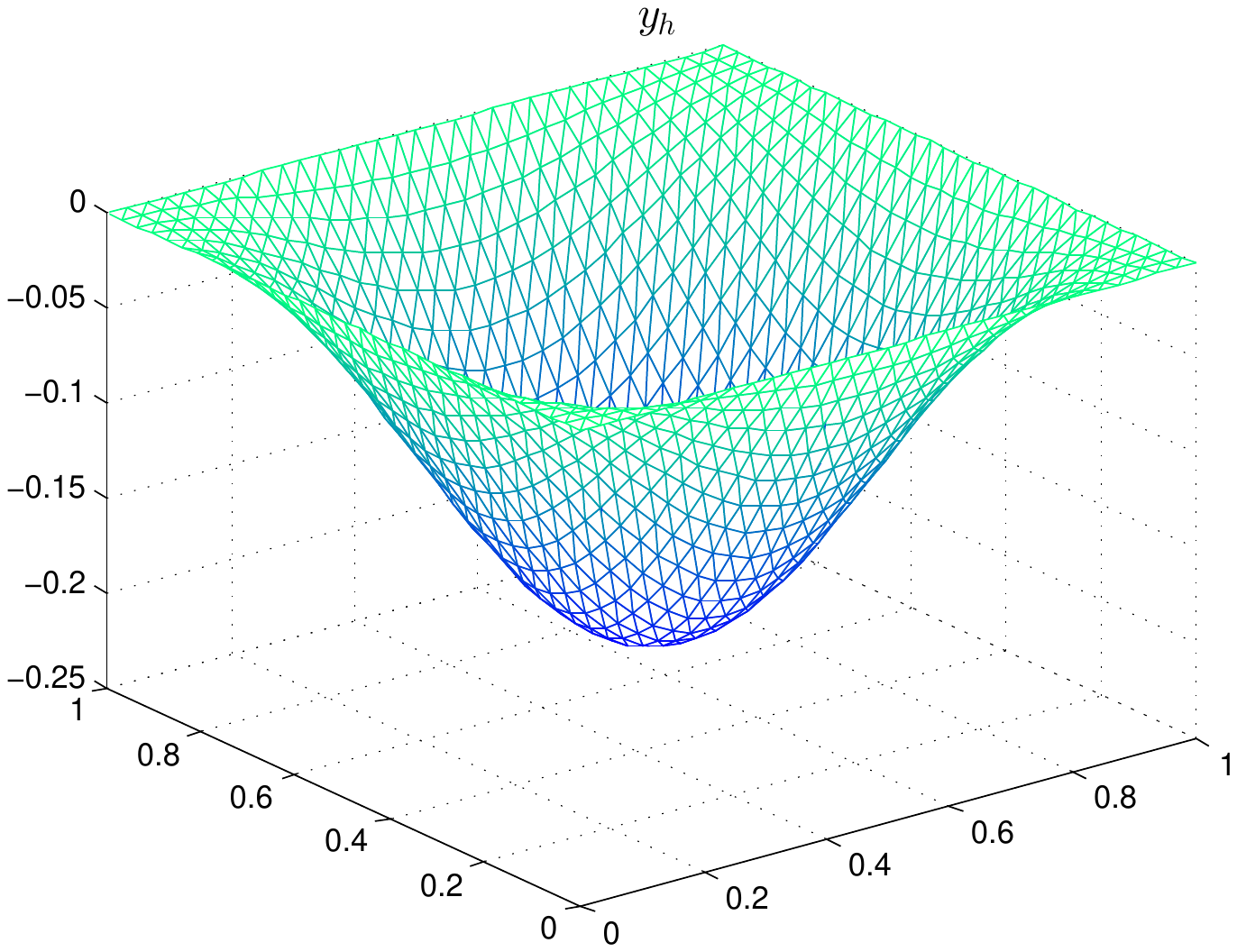}
                \caption{The optimal state $\bar y_h$.}
        \end{subfigure}~
        \begin{subfigure}[h!]{0.5\textwidth}
                \includegraphics[trim = 40mm 80mm 30mm 70mm, clip, width=\textwidth]{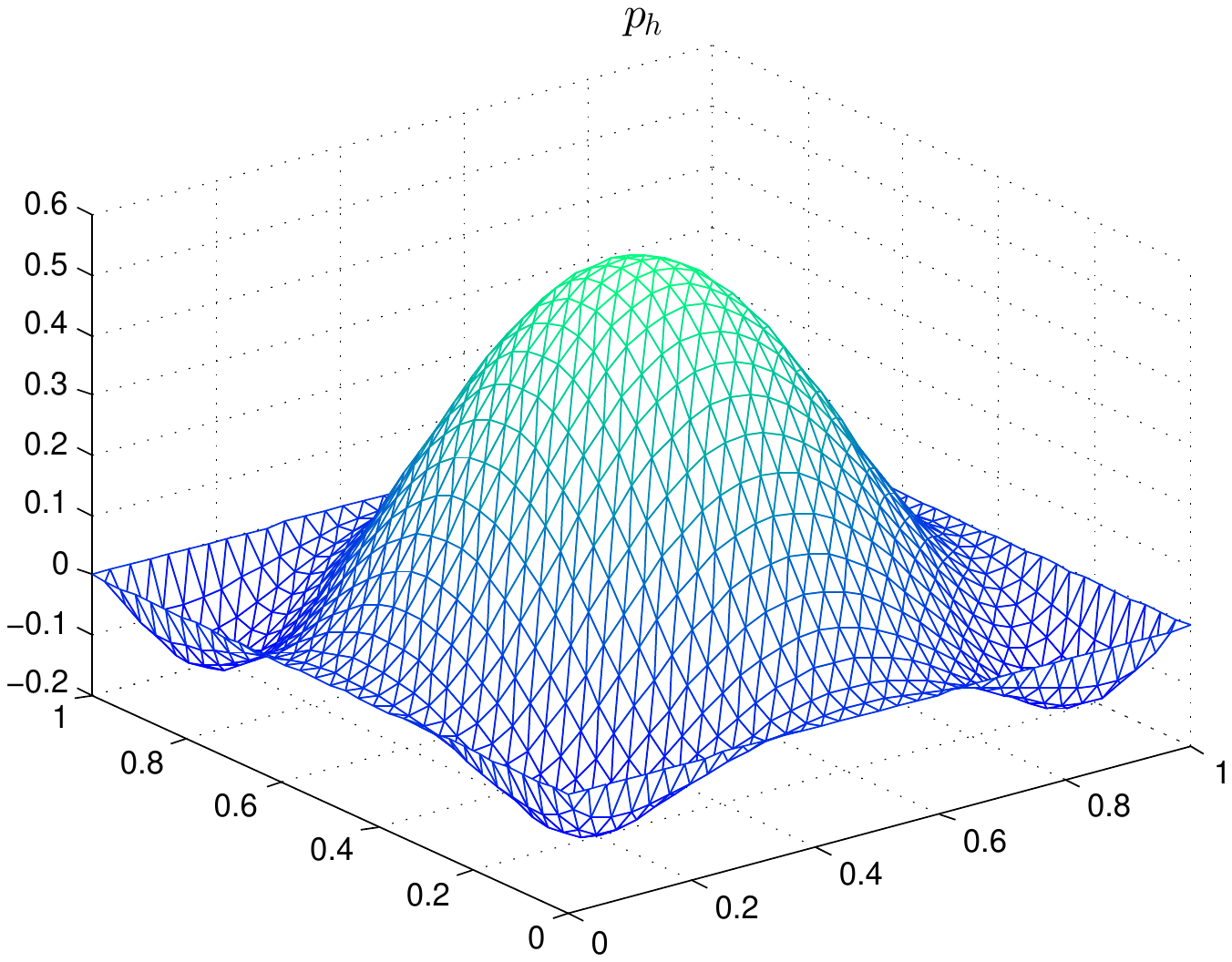}
                \caption{The adjoint state $\bar p_h$.}
        \end{subfigure}
        
        \begin{subfigure}[h!]{0.5\textwidth}
                \includegraphics[trim = 40mm 80mm 30mm 70mm, clip, width=\textwidth]{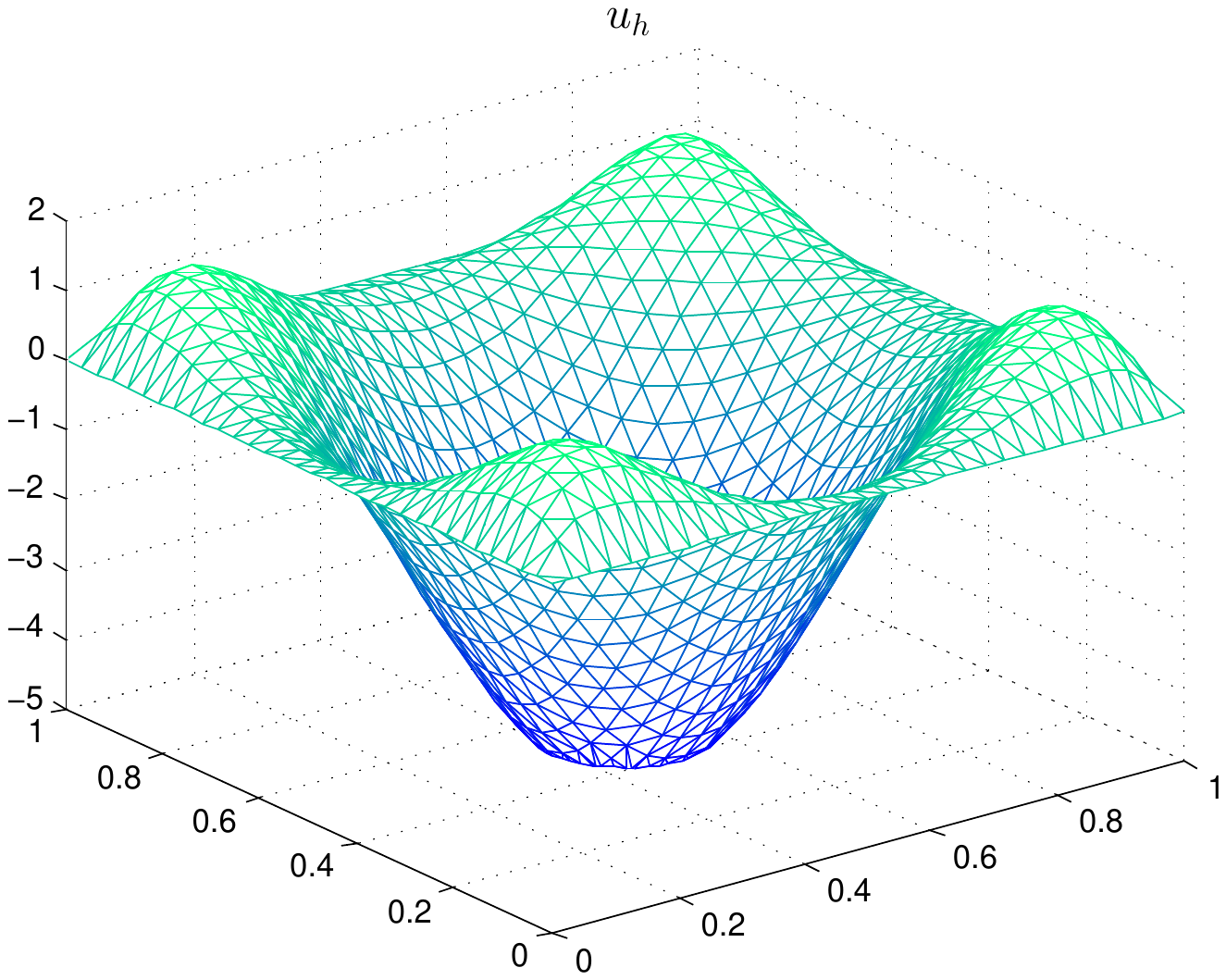}
                \caption{The optimal control $\bar u_h$.}
        \end{subfigure}~
        \begin{subfigure}[h!]{0.5\textwidth}
                \includegraphics[trim = 40mm 80mm 30mm 70mm, clip, width=\textwidth]{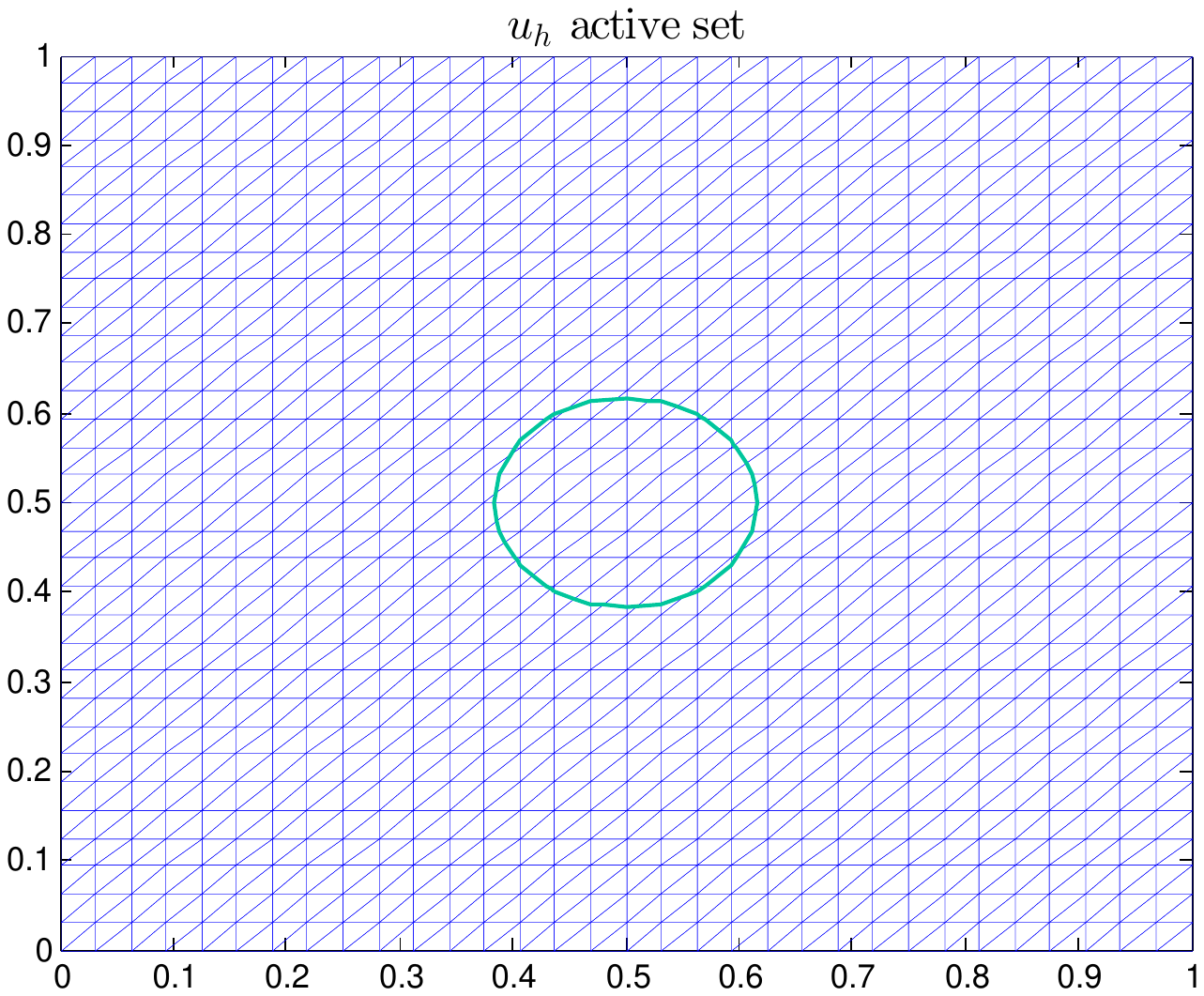}
                \caption{The control active set ($\bar u_h=-5$ inside the polygonal region).}
        \end{subfigure}
        
        \caption{Example~\ref{example: y power 3} Case~2 with choice~\textbf{A2} for $y_0$: The values of $\|\bar p_h\|_{L^4}$, $\eta(\alpha)$ and $J(\bar u_h)$ vs. $\alpha$. The optimal state $\bar y_h$, the optimal control $\bar u_h$, the control active set and the adjoint state $\bar p_h$ for $\alpha=10^{-1}$}
        \label{figure: example y3 case(constrained control) choiceA2}
\end{figure}


\noindent
\textbf{Case~3} (constrained state)
In this case we consider constrains only on the state, we set
\begin{align*}
u_b & = -u_a=\infty, \\
y_a &=-1, \\
y_b &=1.
\end{align*}
The numerical findings associated with choice~\textbf{A1} are provided in Table~\ref{table: example y3 case(constrained state) choiceA1} and illustrated in Figure~\ref{figure: example y3 case(constrained state) choiceA1}. For the choice~\textbf{A2} the results are given in Table~\ref{table: example y3 case(constrained state) choiceA2} and illustrated in Figure~\ref{figure: example y3 case(constrained state) choiceA2}. In both cases we see that $\bar u_h$ is a global minimum for all available values of $\alpha$.

\begin{table}[p]
\caption{Example~\ref{example: y power 3} Case~3 with choice~\textbf{A1} for $y_0$:  The values of  $\|\bar p_h\|_{L^4} $, $\eta(\alpha)$ and $J(\bar u_h)$ for different values of $\alpha$.}
\label{table: example y3 case(constrained state) choiceA1}
\begin{tabular}{ l  c  c  c}
\toprule
$\alpha$ &	  $\|\bar p_h\|_{L^4}$&        $\eta(\alpha)$ &   $J(\bar u_h)$  \\
\midrule[1pt]

1.0e-06 &	  1.166321621310e-04 &	 	 6.776197632762e-03 &	 	 6.248613075636e-02   \\ 
1.0e-05 &	  8.045399583166e-04 &	 	 1.606889689070e-02 &	 	 8.942494600427e-02   \\
1.0e-04 &	  5.009426247692e-03 &	 	 3.810535956559e-02 &	 	 2.037409649052e-01   \\
1.0e-03 &	  1.322797500856e-02 &	 	 9.036204771862e-02 &	 	 4.320833160546e-01   \\
1.0e-02 &	  1.509224717529e-02 &	 	 2.142821839497e-01 &	 	 4.922544738762e-01   \\
1.0e-01 &	  1.530600543072e-02 &	 	 5.081431366100e-01 &	 	 4.992144829702e-01   \\
1.0e+00 &	  1.532768796263e-02 &	 	 1.204997272869e+00 &	 	 4.999213370332e-01   \\
1.0e+01 &	  1.532985932323e-02 &	 	 2.857498848277e+00 &	 	 4.999921325890e-01   \\
1.0e+02 &	  1.533007649041e-02 &	 	 6.776197632762e+00 &	 	 4.999992132478e-01   \\
1.0e+03 &	  1.533009820744e-02 &	 	 1.606889689070e+01 &	 	 4.999999213247e-01   \\

\bottomrule 
\end{tabular}
\end{table}

\begin{figure}[p]
        \centering
        \begin{subfigure}[h!]{0.5\textwidth}
                \includegraphics[trim = 40mm 80mm 30mm 70mm, clip, width=\textwidth]{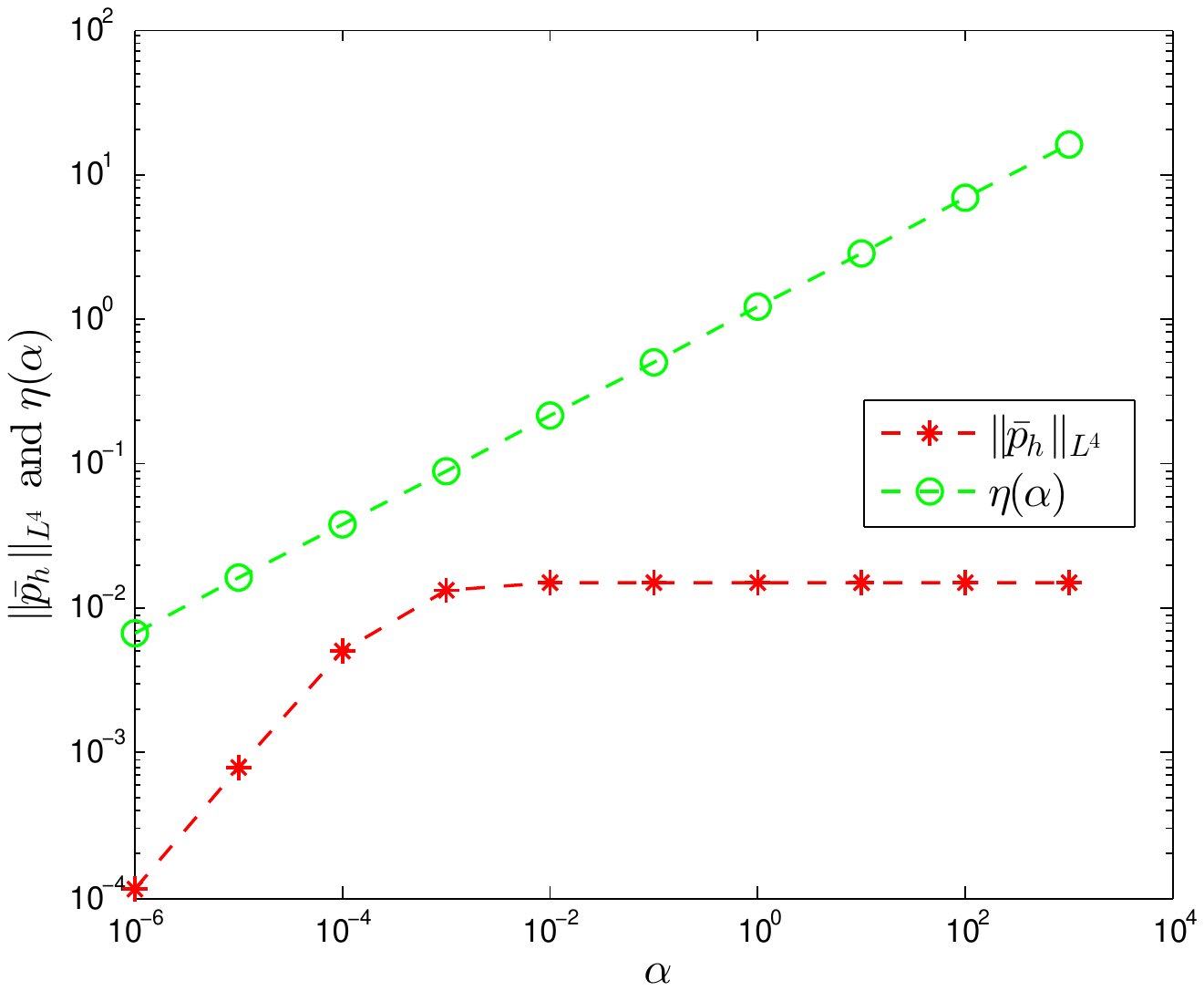}
                \caption{$\|\bar p_h\|_{L^4}$ and $\eta(\alpha)$ vs. $\alpha$.}
        \end{subfigure}%
        ~ 
        \begin{subfigure}[h!]{0.5\textwidth}
                \includegraphics[trim = 30mm 80mm 30mm 70mm, clip, width=\textwidth]{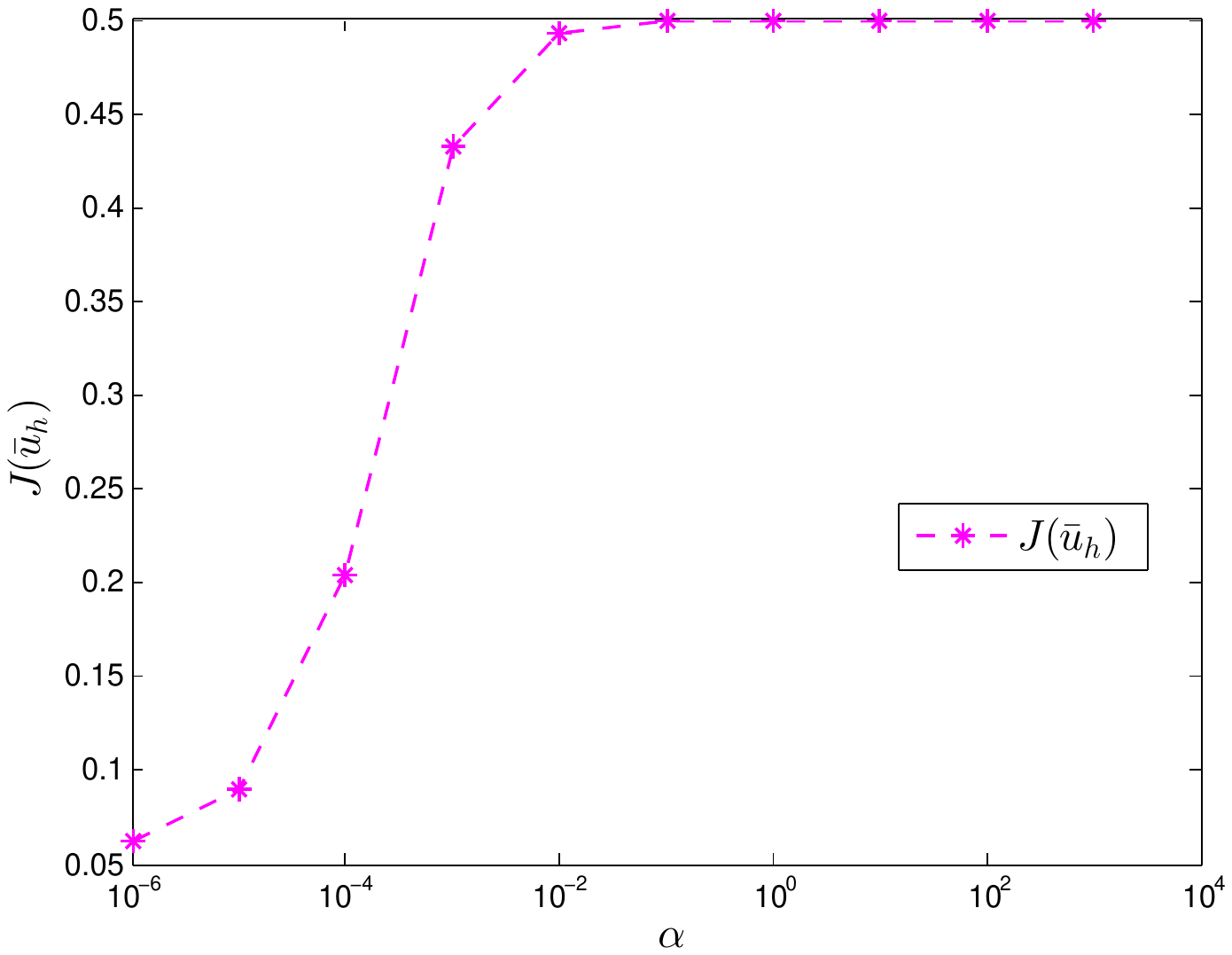}
                \caption{$J(\bar u_h)$  vs. $\alpha$.}
        \end{subfigure}
        
        \begin{subfigure}[h!]{0.5\textwidth}
                \includegraphics[trim = 40mm 80mm 30mm 70mm, clip, width=\textwidth]{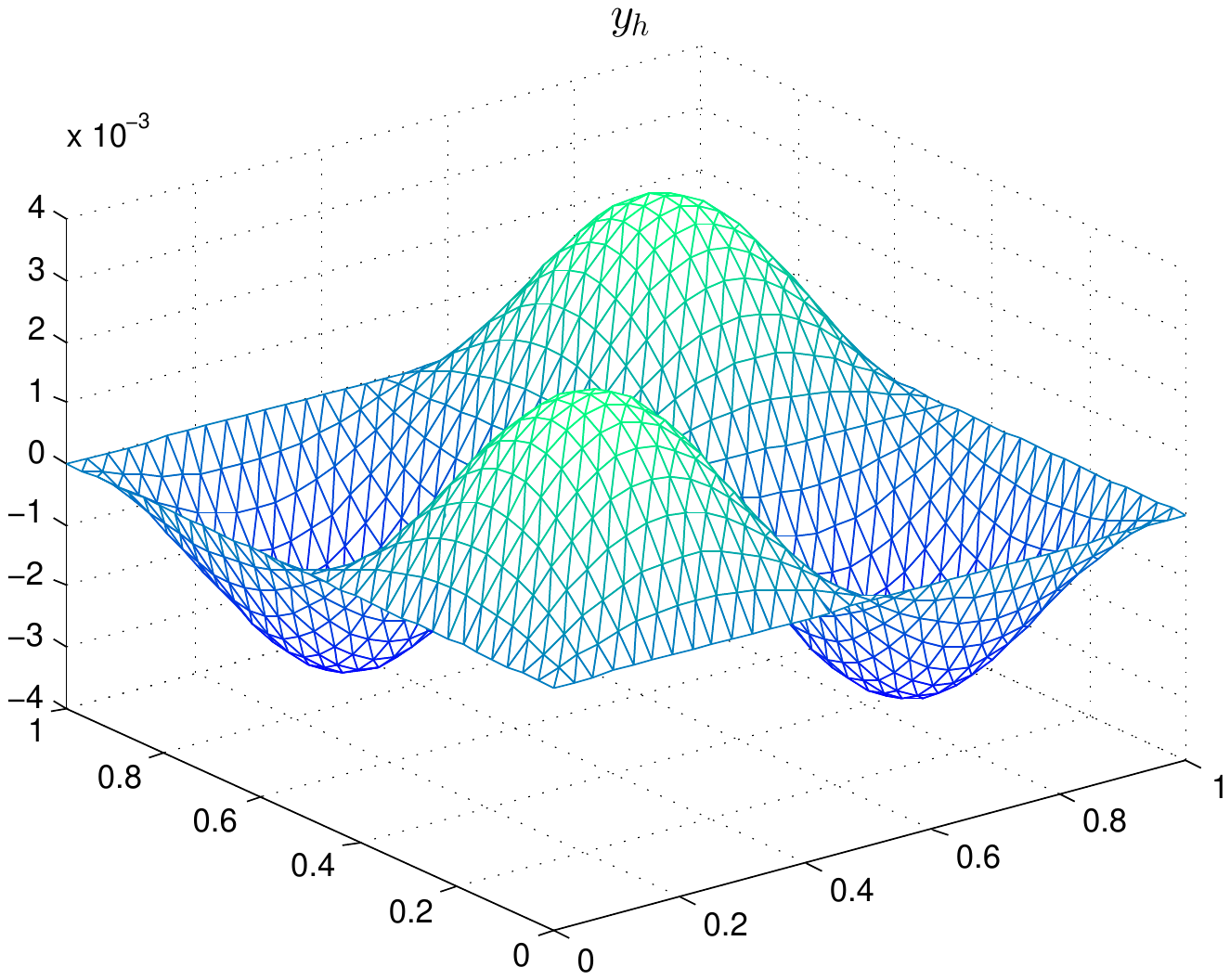}
                \caption{The optimal state $\bar y_h$.}
        \end{subfigure}~
        \begin{subfigure}[h!]{0.5\textwidth}
                \includegraphics[trim = 40mm 80mm 30mm 70mm, clip, width=\textwidth]{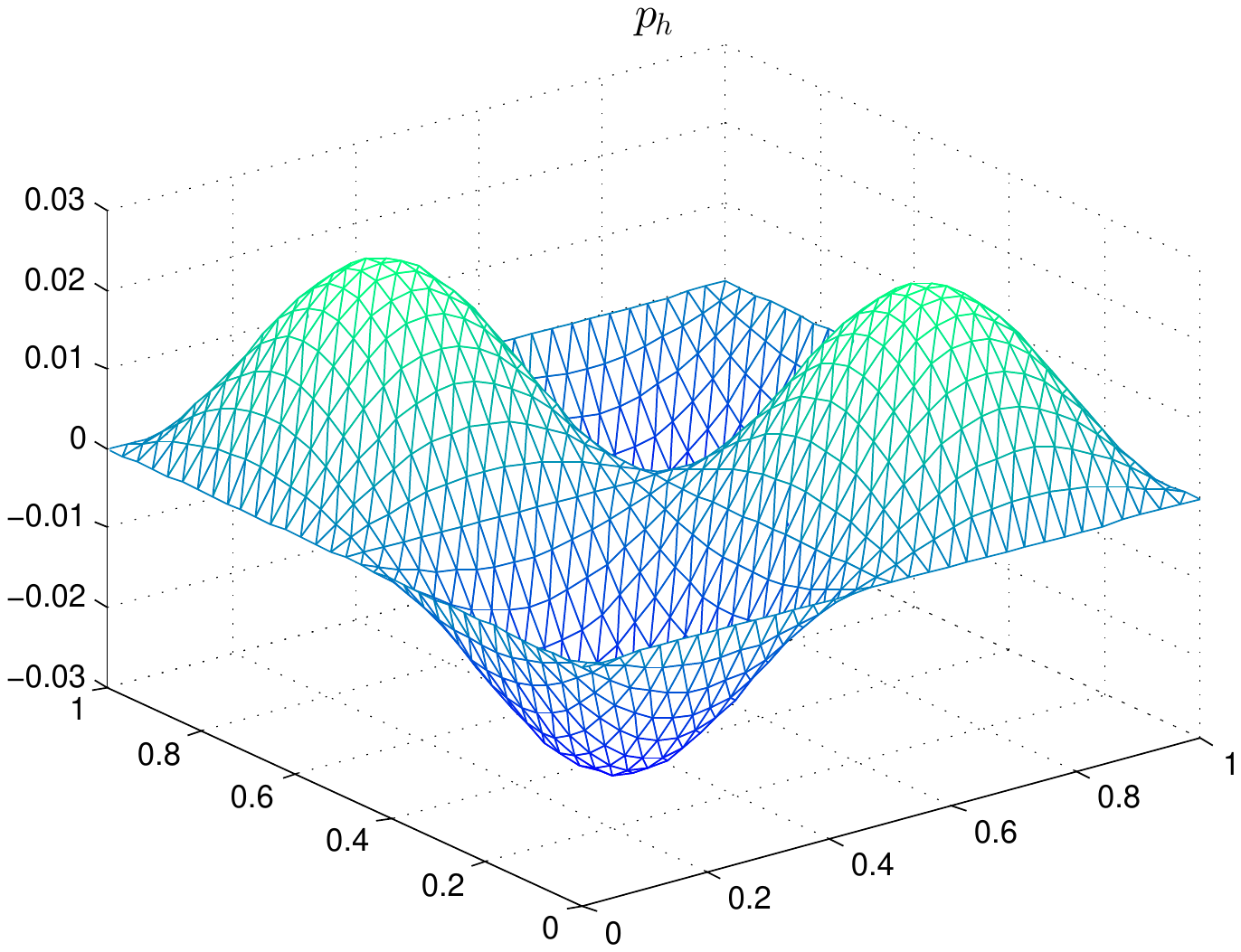}
                \caption{The adjoint state $\bar p_h$.}
        \end{subfigure}
        
        \begin{subfigure}[h!]{0.5\textwidth}
                \includegraphics[trim = 40mm 80mm 30mm 70mm, clip, width=\textwidth]{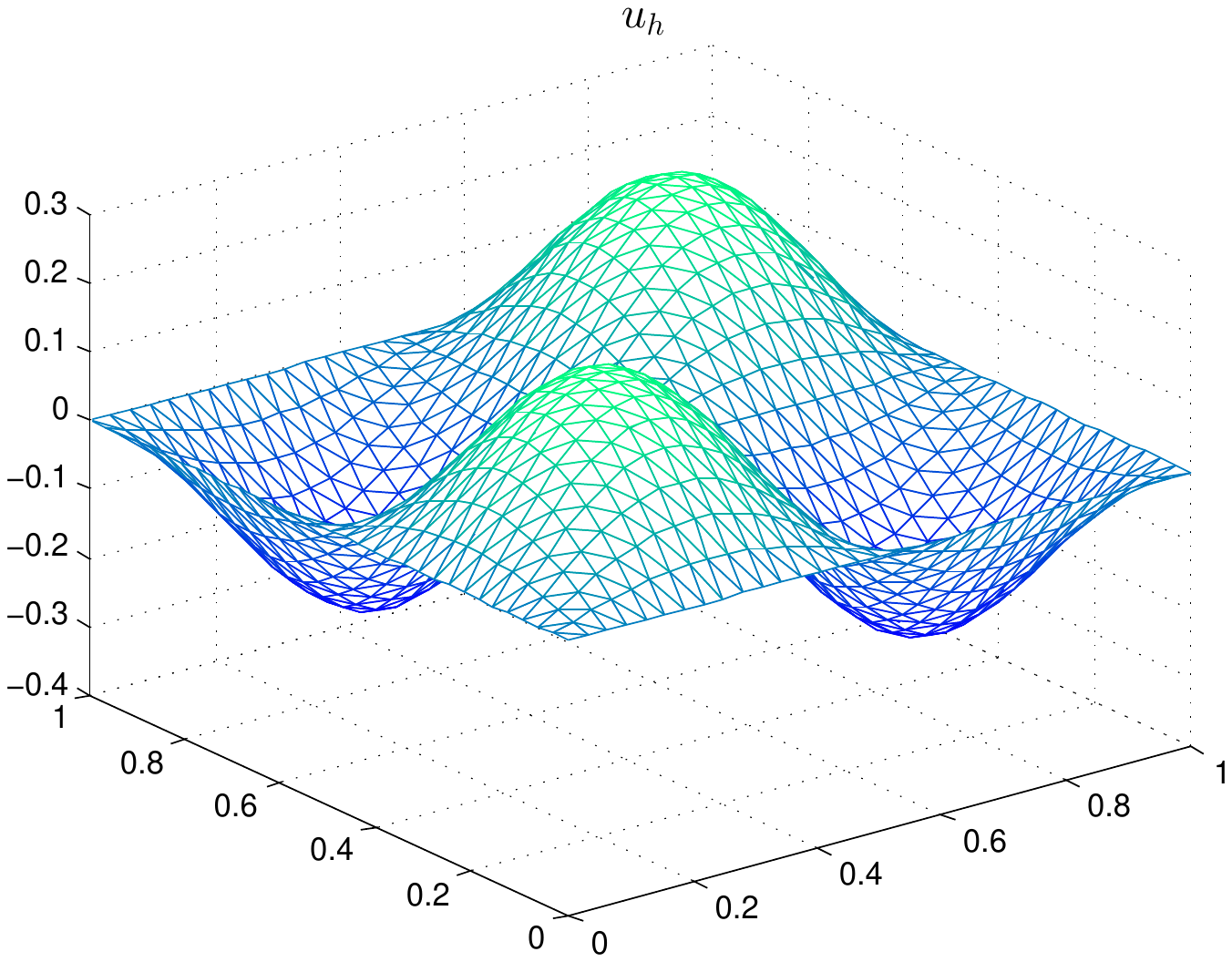}
                \caption{The optimal control $\bar u_h$.}
        \end{subfigure}
        
        \caption{Example~\ref{example: y power 3} Case~3 with choice~\textbf{A1} for $y_0$: The values of $\|\bar p_h\|_{L^4}$, $\eta(\alpha)$ and $J(\bar u_h)$ vs. $\alpha$. The optimal state $\bar y_h$, the optimal control $\bar u_h$ and the adjoint state $\bar p_h$ for $\alpha=10^{-1}$.}
        \label{figure: example y3 case(constrained state) choiceA1}
\end{figure}

\begin{table}[h!]
\caption{Example~\ref{example: y power 3} Case~3 with choice~\textbf{A2} for $y_0$: The values of  $\|\bar p_h\|_{L^4} $, $\eta(\alpha)$ and $J(\bar u_h)$ for different values of $\alpha$.}
\label{table: example y3 case(constrained state) choiceA2}
\begin{tabular}{ l  c  c  c}
\toprule
$\alpha$ &	  $\|\bar p_h\|_{L^4}$&        $\eta(\alpha)$ &   $J(\bar u_h)$  \\
\midrule[1pt]

1.0e-06 &	  8.727496956489e-04 &	 	 6.776197632762e-03 &	 	 1.525635329141e+02   \\
1.0e-05 &	  6.303449080470e-03 &	 	 1.606889689070e-02 &	 	 1.536018906075e+02   \\
1.0e-04 &	  2.143214405409e-02 &	 	 3.810535956559e-02 &	 	 1.559131574621e+02   \\
1.0e-03 &	  8.541044896637e-02 &	 	 9.036204771862e-02 &	 	 1.600259053817e+02   \\
1.0e-02 &	  1.596641521237e-01 &	 	 2.142821839497e-01 &	 	 1.627648901943e+02   \\
1.0e-01 &	  2.997603240217e-01 &	 	 5.081431366100e-01 &	 	 1.642031427088e+02   \\
1.0e+00 &	  3.061090377257e-01 &	 	 1.204997272869e+00 &	 	 1.644198126030e+02   \\
1.0e+01 &	  3.066635772733e-01 &	 	 2.857498848277e+00 &	 	 1.644419766418e+02   \\
1.0e+02 &	  3.067181181971e-01 &	 	 6.776197632762e+00 &	 	 1.644441976184e+02   \\
1.0e+03 &	  3.067235630566e-01 &	 	 1.606889689070e+01 &	 	 1.644444197614e+02   \\

\bottomrule 
\end{tabular}
\end{table}

\begin{figure}[p]
        \centering
        \begin{subfigure}[h!]{0.5\textwidth}
                \includegraphics[trim = 40mm 80mm 30mm 70mm, clip, width=\textwidth]{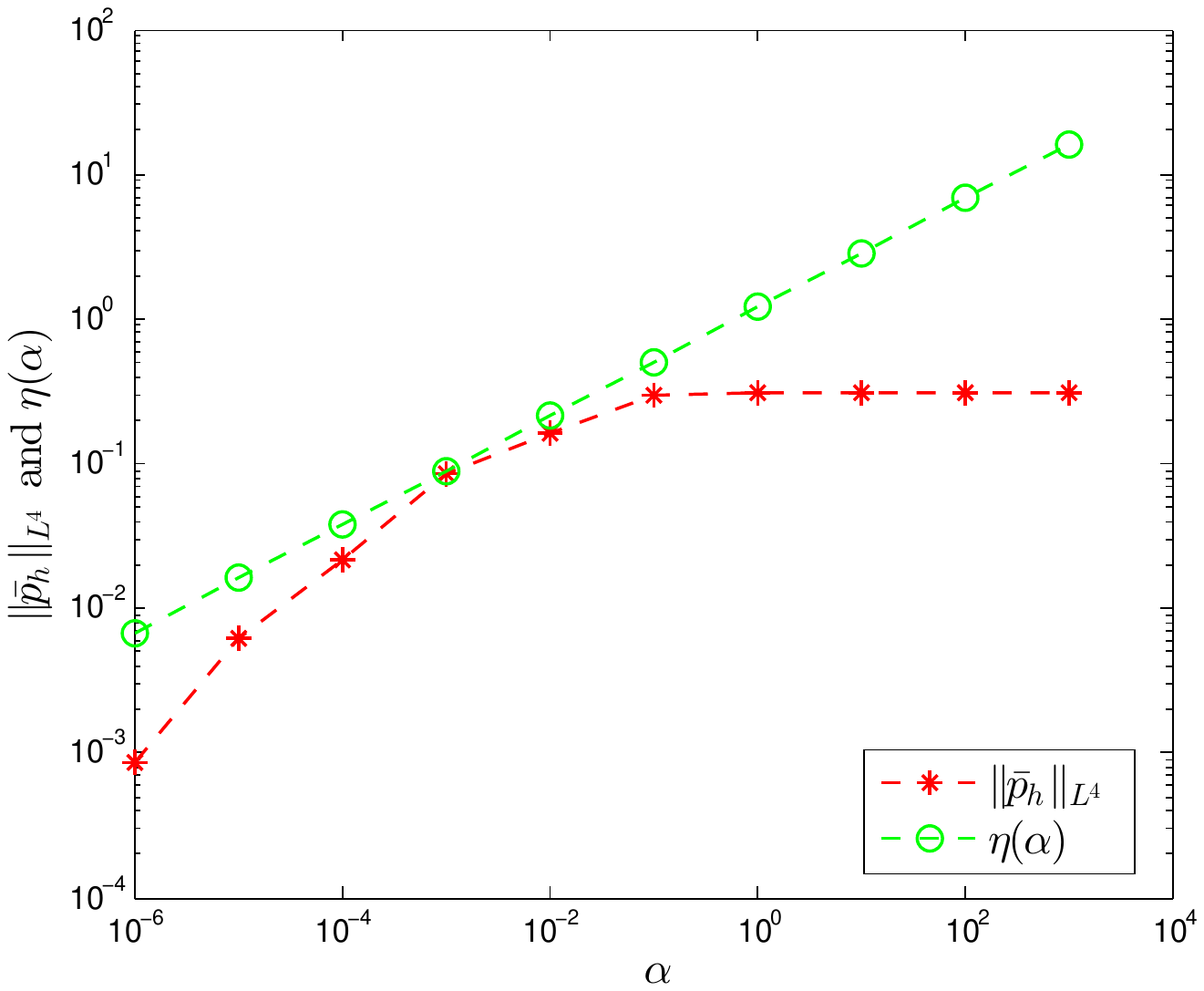}
                \caption{$\|\bar p_h\|_{L^4}$ and $\eta(\alpha)$ vs. $\alpha$.}
        \end{subfigure}%
        ~ 
        \begin{subfigure}[h!]{0.5\textwidth}
                \includegraphics[trim = 30mm 80mm 30mm 70mm, clip, width=\textwidth]{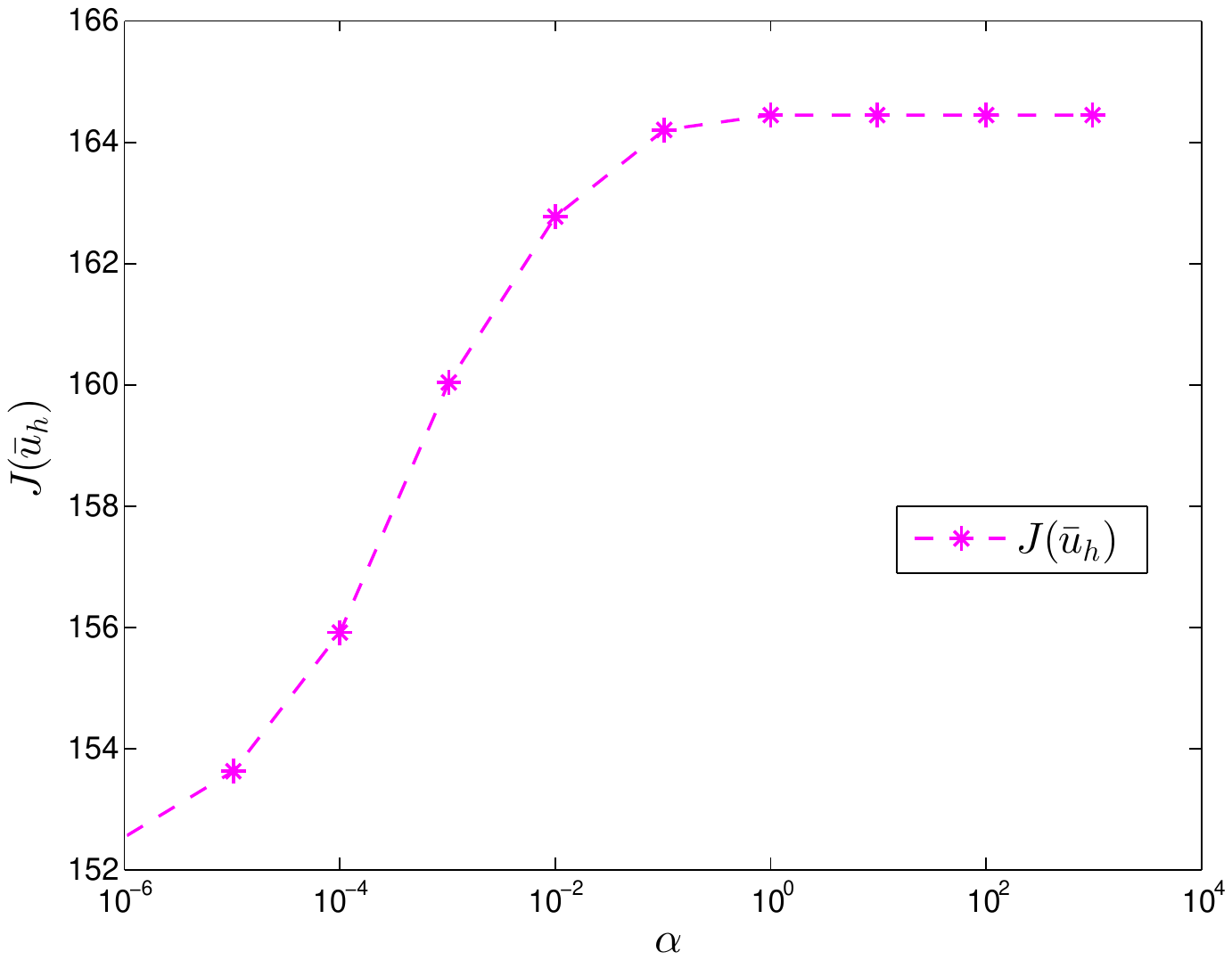}
                \caption{$J(\bar u_h)$  vs. $\alpha$.}
        \end{subfigure}
        
        \begin{subfigure}[h!]{0.5\textwidth}
                \includegraphics[trim = 40mm 80mm 30mm 70mm, clip, width=\textwidth]{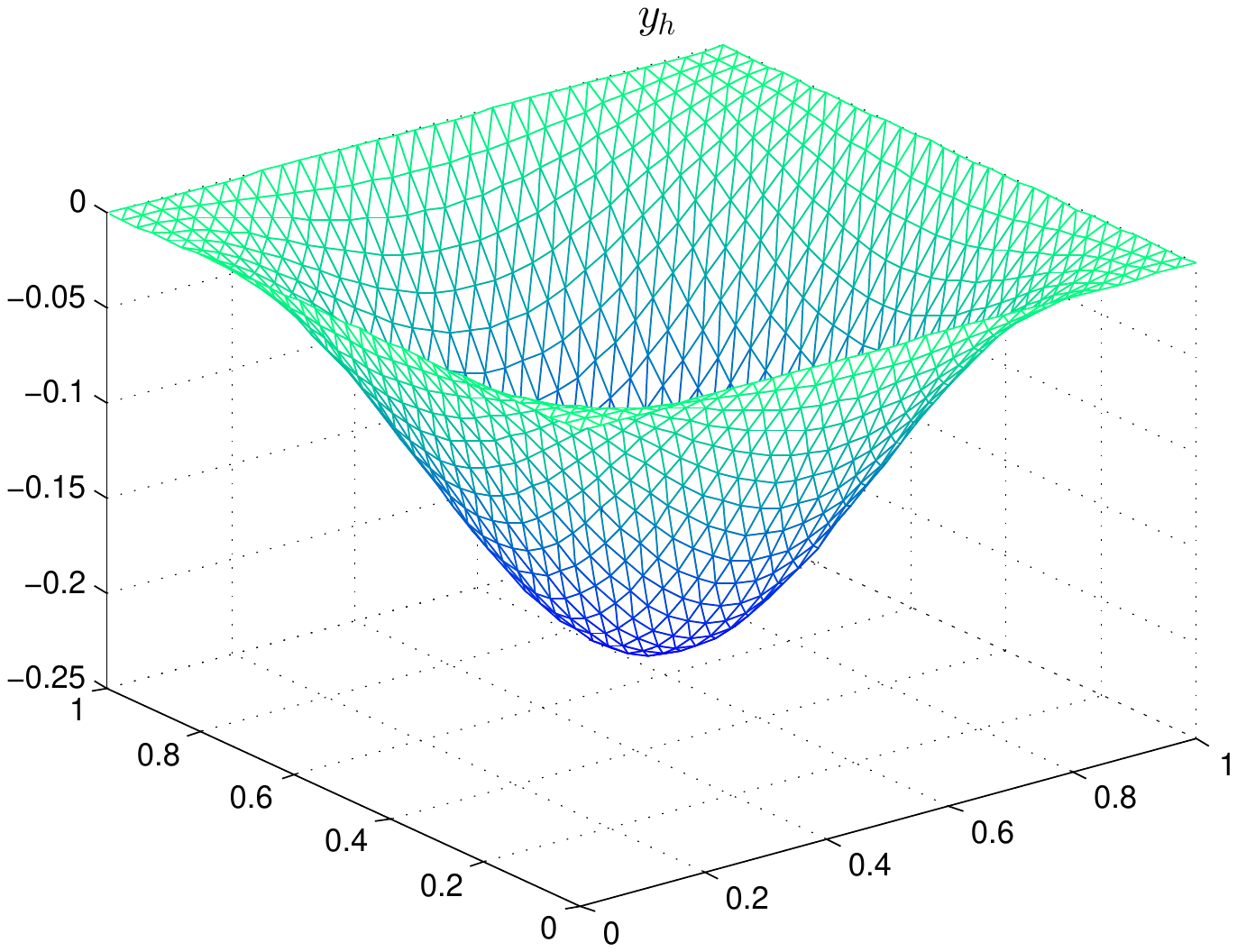}
                \caption{The optimal state $\bar y_h$.}
        \end{subfigure}~
        \begin{subfigure}[h!]{0.5\textwidth}
                \includegraphics[trim = 40mm 80mm 30mm 70mm, clip, width=\textwidth]{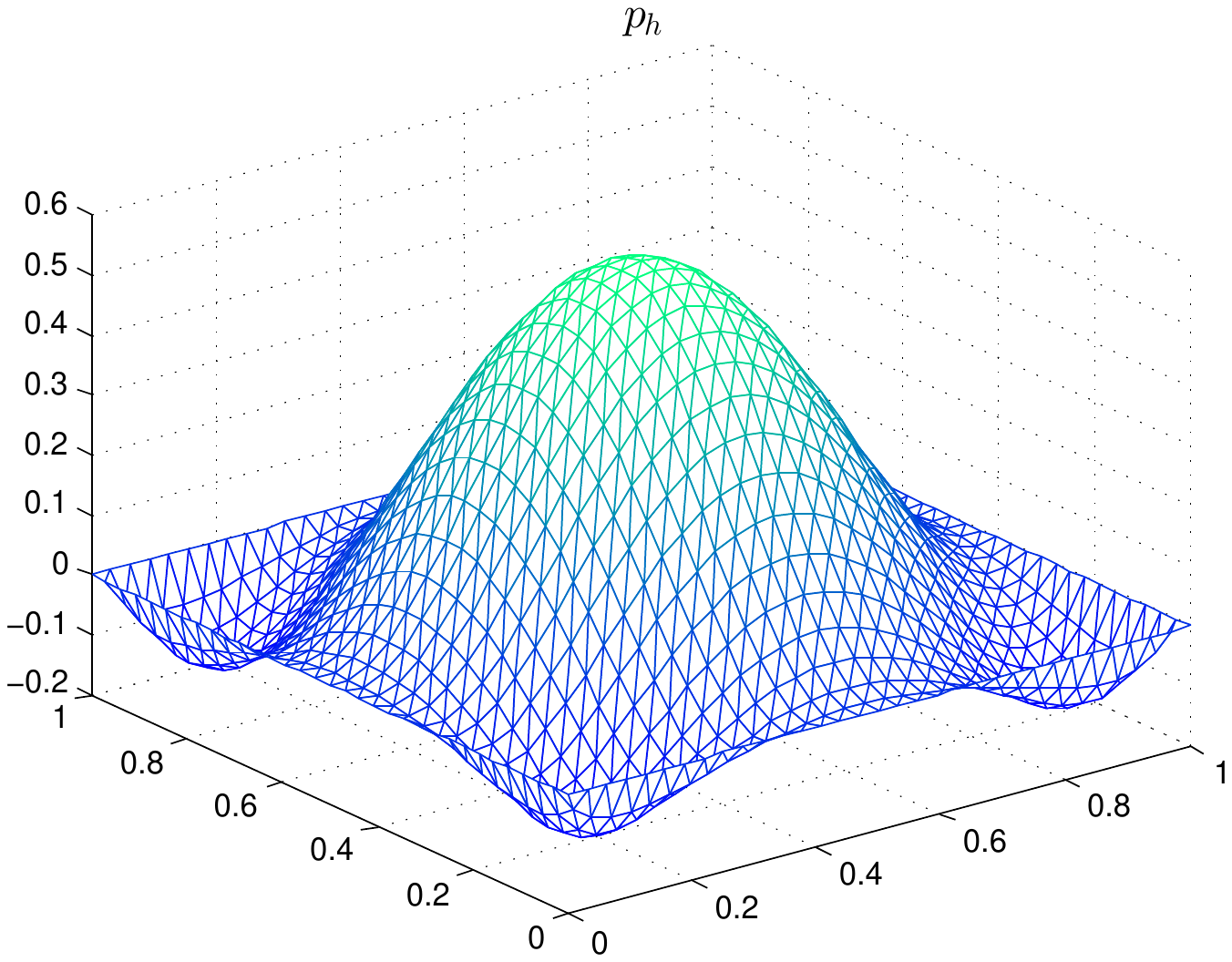}
                \caption{The adjoint state $\bar p_h$.}
        \end{subfigure}
        
        \begin{subfigure}[h!]{0.5\textwidth}
                \includegraphics[trim = 40mm 80mm 30mm 70mm, clip, width=\textwidth]{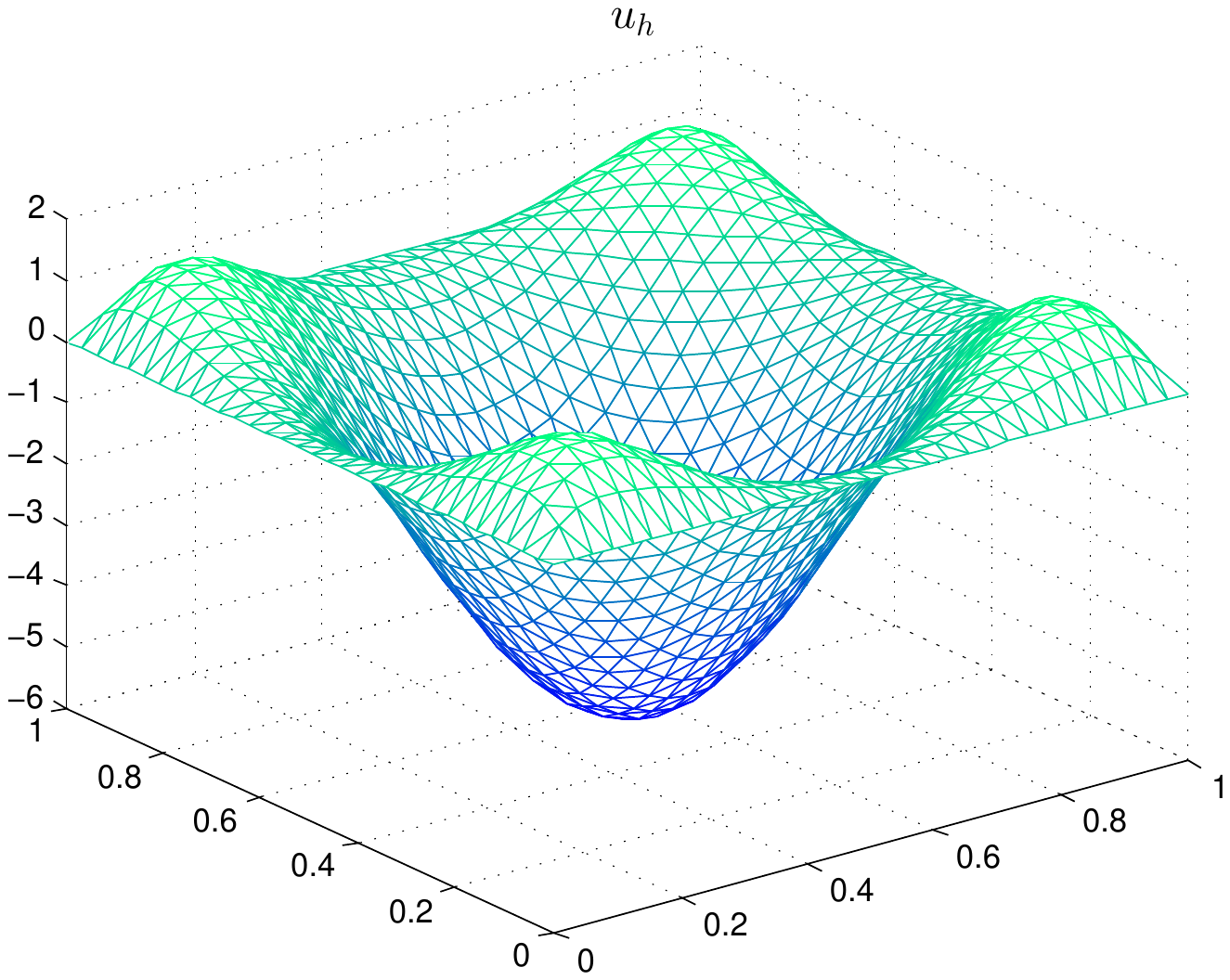}
                \caption{The optimal control $\bar u_h$.}
        \end{subfigure}
        
        \caption{Example~\ref{example: y power 3} Case~3 with choice~\textbf{A2} for $y_0$: The values of $\|\bar p_h\|_{L^4}$, $\eta(\alpha)$ and $J(\bar u_h)$ vs. $\alpha$. The optimal state $\bar y_h$, the optimal control $\bar u_h$ and the adjoint state $\bar p_h$ for $\alpha=10^{-1}$.}
        \label{figure: example y3 case(constrained state) choiceA2}
\end{figure}

\noindent
\textbf{Case~4} 
The following example is taken from  \cite[Section 7]{neitzel2014finite}. In particular, $\phi(s)=s^3$ and
\begin{align*}
u_b &=-u_a=\infty, \\
y_b &=\infty, \\
y_a(x)&=-\frac{2}{3}+\min \bigg( \frac{1}{2}(x_1 + x_2), \frac{1}{2}(1+ x_1 - x_2),  \frac{1}{2}(1- x_1 + x_2),1- \frac{1}{2}(x_1 + x_2) \bigg),\\
y_0 &=-1 \\
\alpha &=10^{-3}.
\end{align*}

The numerical findings for this case are given in Table~\ref{table: example y3 case(NeitzelExample)} and they are illustrated graphically in Figure~\ref{figure: example y3 case(NeitzelExample)}. It is clear that $\bar u_h$ is a global minimum for the given values of $\alpha$.
\begin{table}[p]
\caption{Example~\ref{example: y power 3}  Case~4: The values of  $\|\bar p_h\|_{L^4} $, $\eta(\alpha)$ and $J(\bar u_h)$ for different values of $\alpha$.}
\label{table: example y3 case(NeitzelExample)}
\begin{tabular}{ l  c  c  c}
\toprule
$\alpha$ &	  $\|\bar p_h\|_{L^4}$&        $\eta(\alpha)$ &   $J(\bar u_h)$  \\
\midrule[1pt]

1.0e-06 &	  1.961933031441e-04 &	 	 6.776197632762e-03 &	 	 2.143984056211e-01   \\
1.0e-05 &	  7.663887131231e-04 &	 	 1.606889689070e-02 &	 	 2.410556714493e-01   \\
1.0e-04 &	  2.844056064106e-03 &	 	 3.810535956559e-02 &	 	 2.890783107664e-01   \\
1.0e-03 &	  1.055630139945e-02 &	 	 9.036204771862e-02 &	 	 3.690000948128e-01   \\
1.0e-02 &	  2.397197977885e-02 &	 	 2.142821839497e-01 &	 	 4.449373232494e-01   \\
1.0e-01 &	  4.706175447556e-02 &	 	 5.081431366100e-01 &	 	 4.917394785652e-01   \\
1.0e+00 &	  4.818113594926e-02 &	 	 1.204997272869e+00 &	 	 4.991551130306e-01   \\
1.0e+01 &	  4.829535470702e-02 &	 	 2.857498848277e+00 &	 	 4.999153188201e-01   \\
1.0e+02 &	  4.830679945384e-02 &	 	 6.776197632762e+00 &	 	 4.999915299530e-01   \\
1.0e+03 &	  4.830794415727e-02 &	 	 1.606889689070e+01 &	 	 4.999991529760e-01   \\

\bottomrule 
\end{tabular}
\end{table}

\begin{figure}[p]
        \centering
        \begin{subfigure}[h!]{0.5\textwidth}
                \includegraphics[trim = 40mm 80mm 30mm 70mm, clip, width=\textwidth]{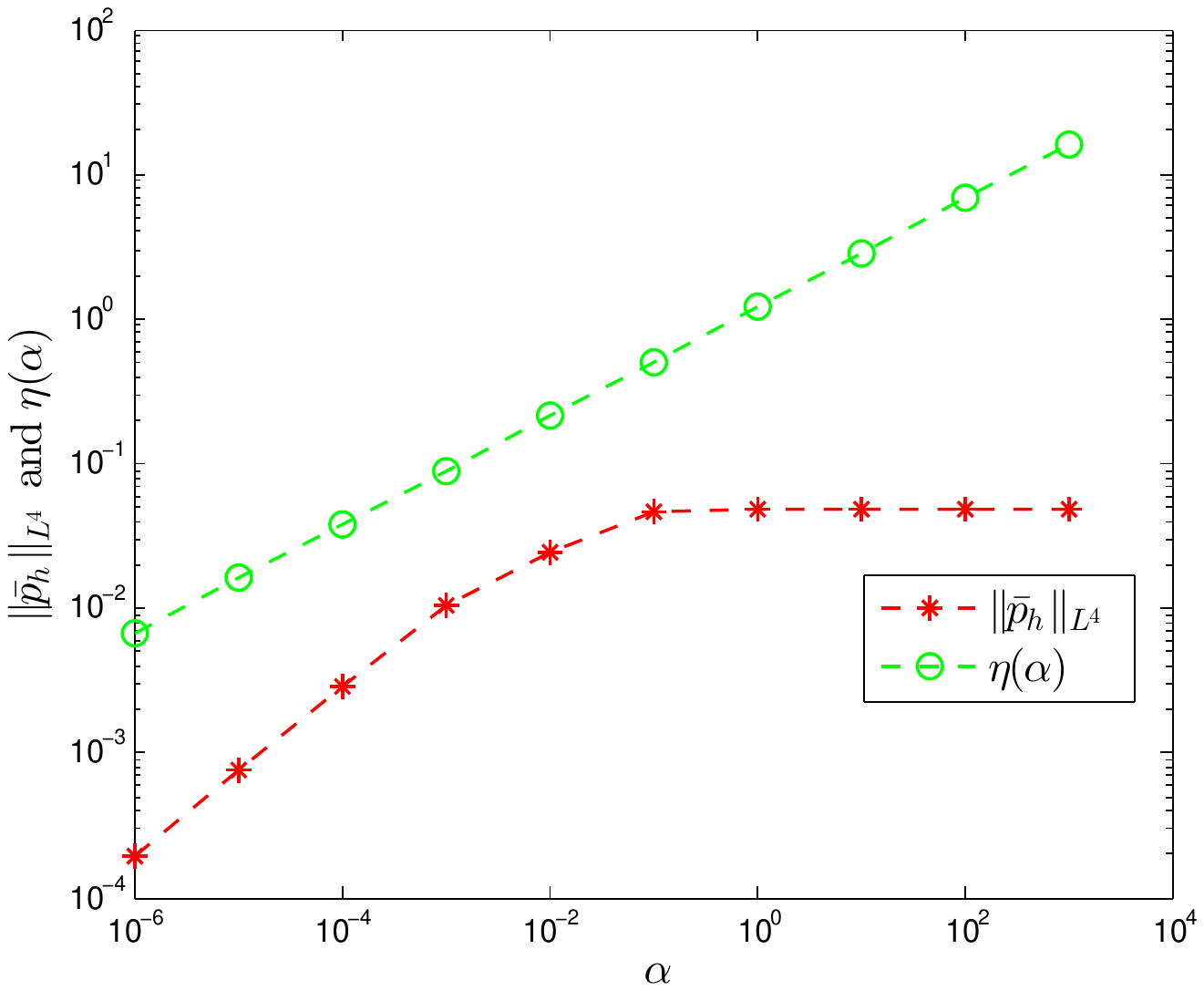}
                \caption{$\|\bar p_h\|_{L^4}$ and $\eta(\alpha)$ vs. $\alpha$.}
        \end{subfigure}%
        ~ 
        \begin{subfigure}[h!]{0.5\textwidth}
                \includegraphics[trim = 35mm 80mm 30mm 70mm, clip, width=\textwidth]{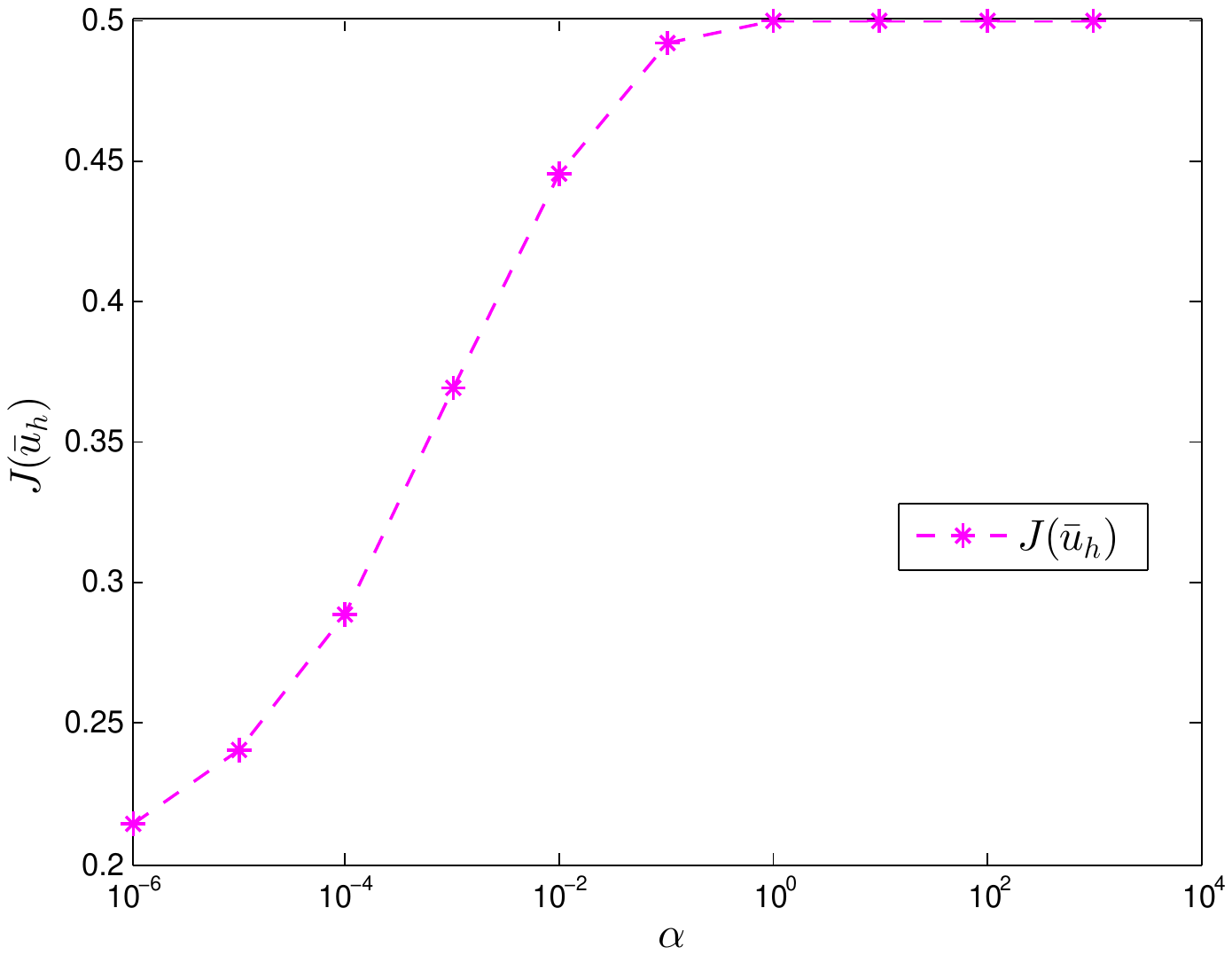}
                \caption{$J(\bar u_h)$  vs. $\alpha$.}
        \end{subfigure}
        
        \begin{subfigure}[h!]{0.5\textwidth}
                  \includegraphics[trim = 40mm 80mm 30mm 70mm, clip, width=\textwidth]{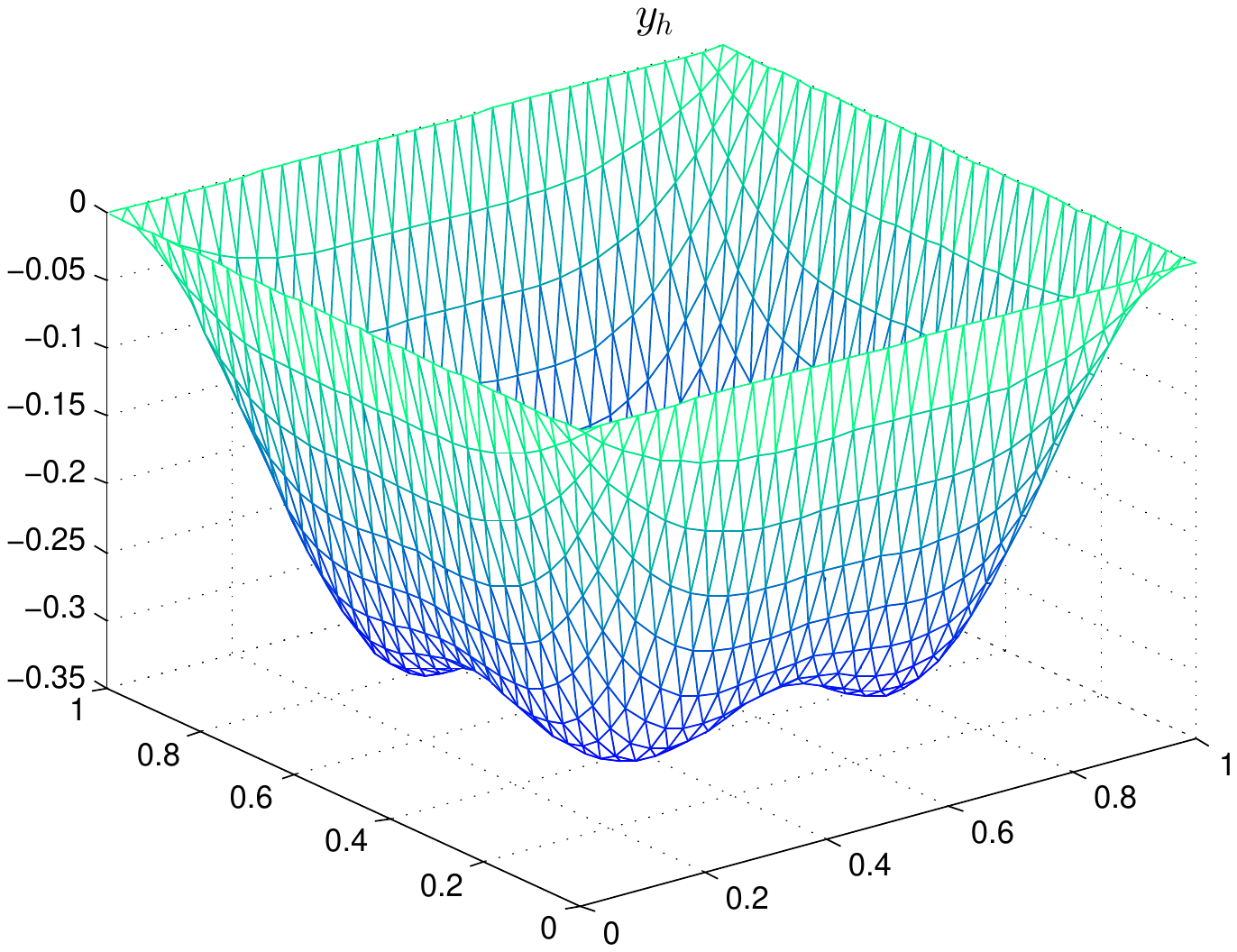}
                \caption{The optimal state $\bar y_h$.}
        \end{subfigure}~
        \begin{subfigure}[h!]{0.5\textwidth}
                  \includegraphics[trim = 40mm 80mm 30mm 70mm, clip, width=\textwidth]{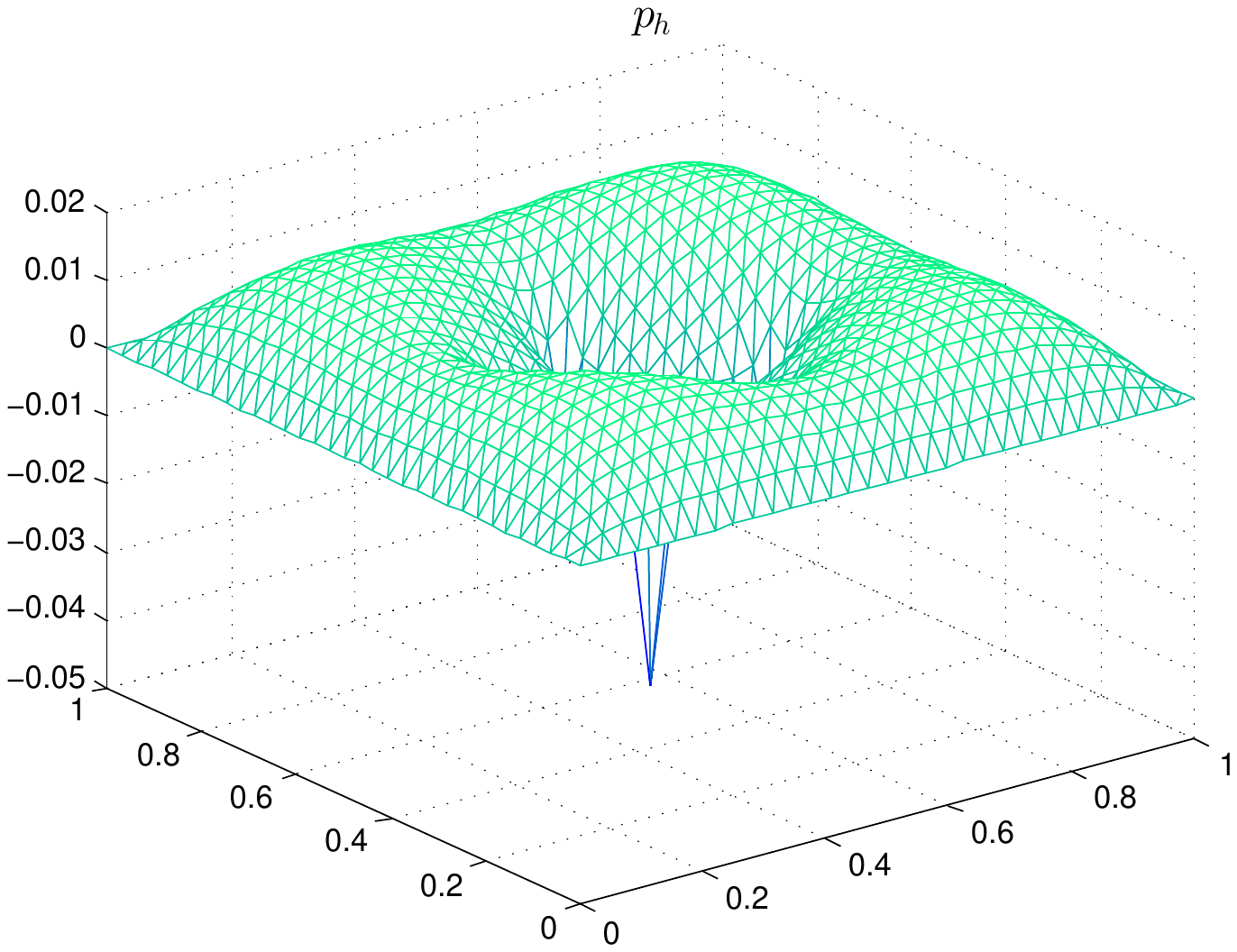}
                \caption{The adjoint state $\bar p_h$.}
        \end{subfigure}
        
        \begin{subfigure}[h!]{0.5\textwidth}
                  \includegraphics[trim = 40mm 80mm 30mm 70mm, clip, width=\textwidth]{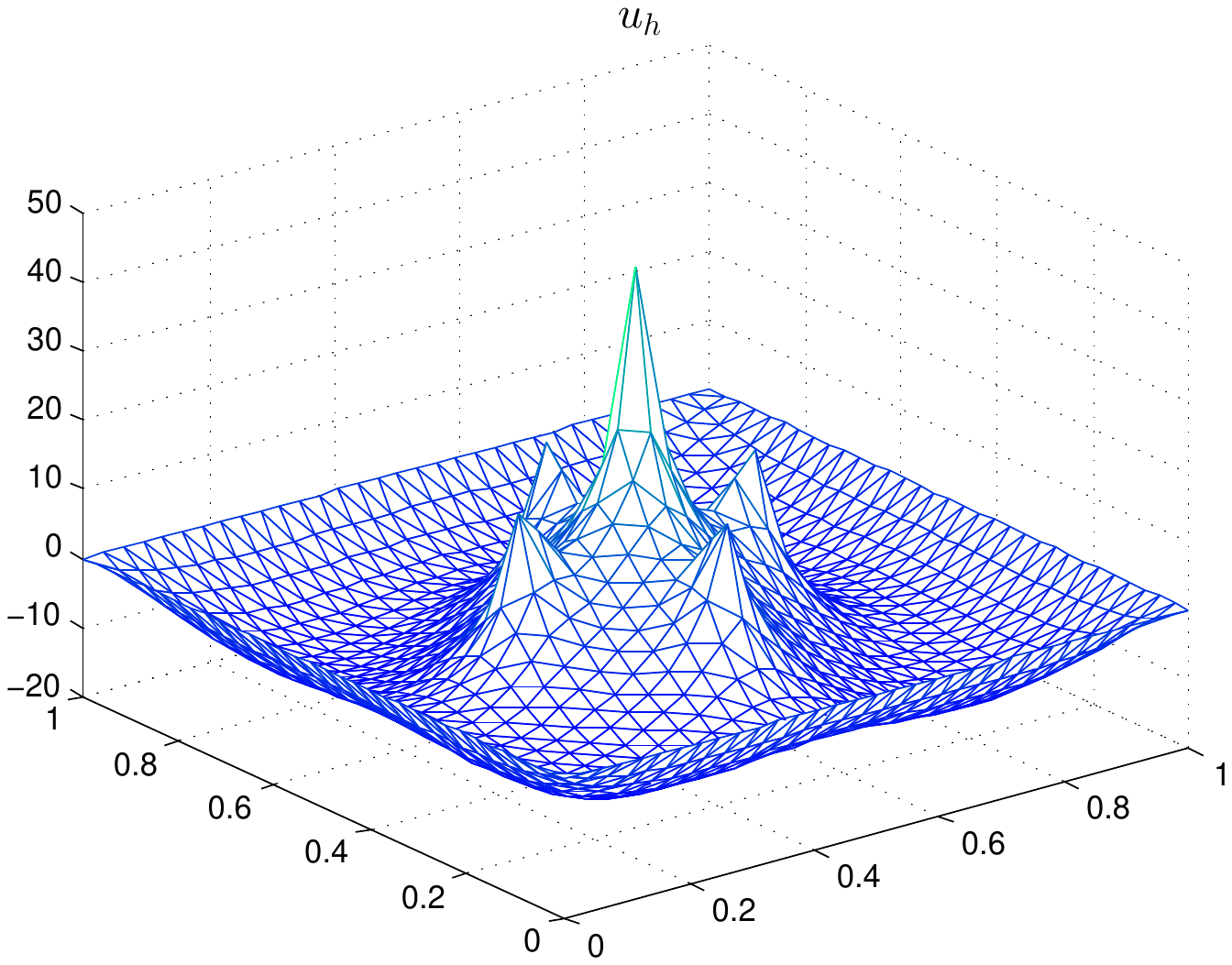}
                \caption{The optimal control $\bar u_h$.}
        \end{subfigure}~
        \begin{subfigure}[h!]{0.5\textwidth}
                  \includegraphics[trim = 40mm 80mm 30mm 70mm, clip, width=\textwidth]{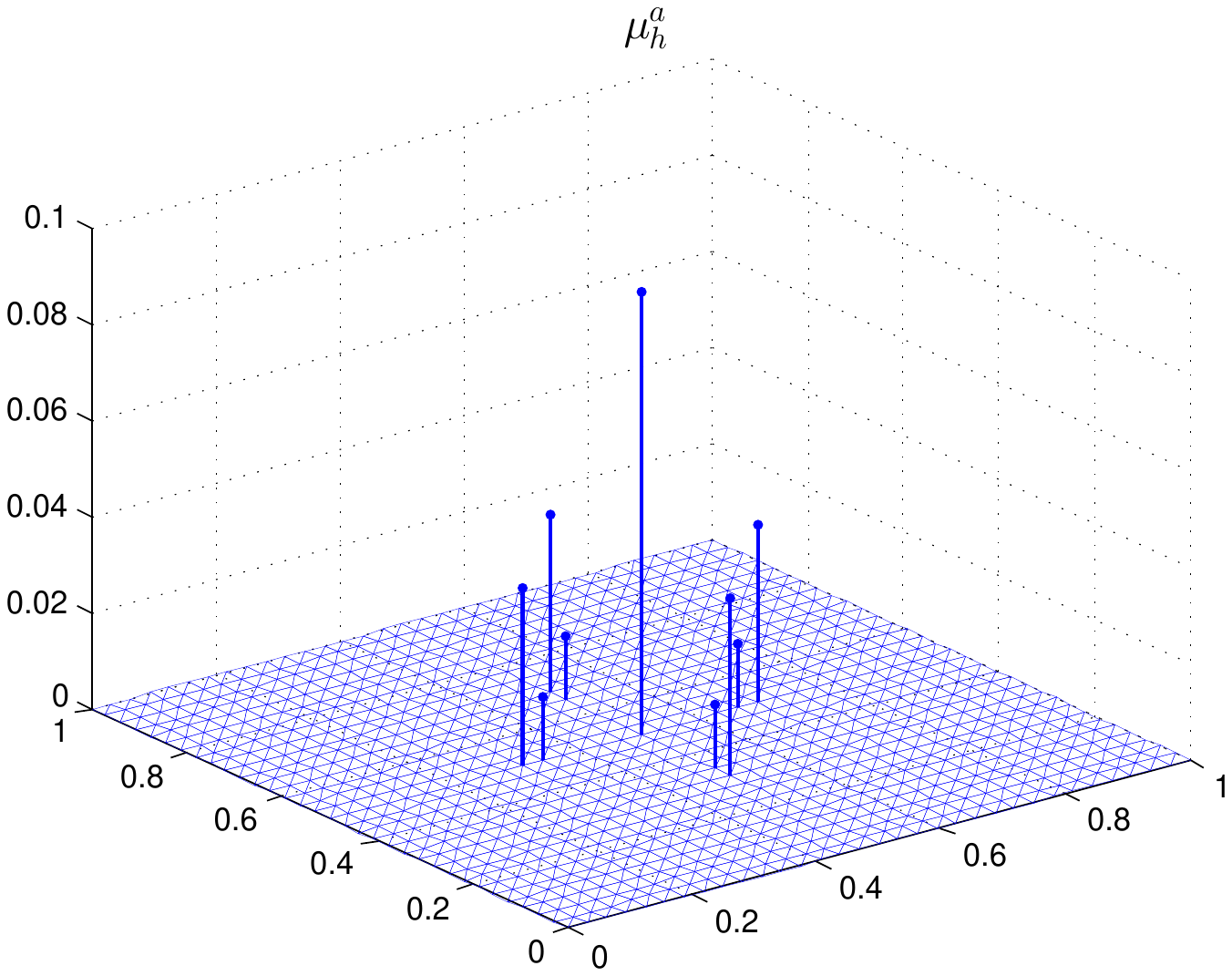}
                \caption{The multiplier $\bar \mu^a_h$.}
        \end{subfigure}

        \caption{Example~\ref{example: y power 3} Case~4: The values of $\|\bar p_h\|_{L^4}$, $\eta(\alpha)$ and $J(\bar u_h)$ vs. $\alpha$. The optimal state $\bar y_h$, the optimal control $\bar u_h$, the adjoint state $\bar p_h$ and the multiplier $\bar \mu^a_h$ for $\alpha=10^{-3}$.}
        \label{figure: example y3 case(NeitzelExample)}
\end{figure}


\end{example}

\newpage
\begin{example}
\label{example: y power 5}
In this example we define $\phi(s):=s^5$. We see that Assumption 1 is satisfied with 
\[
r=\frac{4}{3} \quad \mbox{ and } \quad M=\frac{20}{5^{\frac{3}{4}}}.
\]
Hence, in view of Theorem~\ref{theorem: 2} we have $q=6$ and a control $\bar u_h$ obtained from solving (\ref{equation: 4})-(\ref{inequality: 2}) is a global minimum if the associated adjoint state $\bar p_h$ satisfies 
\[
\|\bar p_h\|_{L^6(\Omega)} \leq \frac{11^{\frac{11}{24}}}{ 13^{\frac{13}{24}} 2^{\frac{1}{6}}   \sqrt{3}}   C^{-\frac{1}{2}}_6 \alpha^{\frac{11}{24}},
\]
where $C^{-\frac{1}{2}}_6 \approx 1.271251384316953$ is the constant from Lemma~\ref{lemma: 2}. For this example we consider the following three cases. 
We abbreviate
\[
\eta(\alpha):= \eta(\alpha,\frac{4}{3})= \frac{11^{\frac{11}{24}}}{ 13^{\frac{13}{24}} 2^{\frac{1}{6}}   \sqrt{3}}   C^{-\frac{1}{2}}_6 \alpha^{\frac{11}{24}}.
\] 
\noindent
\textbf{Case~1} (unconstrained problem)
In this case we set
\begin{align*}
u_b &=-u_a=\infty, \\
y_b &=-y_a=\infty. \\
\end{align*}
The values of $\|\bar p_h\|_{L^6} $, $\eta(\alpha)$ and $J(\bar u_h)$ for different values of $\alpha$ with choice~\textbf{A1} for $y_0$ are given in Table~\ref{table: example y5 case(unconstrained) choiceA1}.  The findings are illustrated graphically in Figure~\ref{figure: example y5 case(unconstrained) choiceA1}. We see that $\bar u_h$ is a global minimum for all values of $\alpha$ since $\|\bar p_h\|_{L^6} $ is less than its corresponding  $\eta(\alpha)$. On the other hand, with choice~\textbf{A2} for $y_0$ we can claim that $\bar u_h$ is a global minimum only for approximately $\alpha$ greater than $1$ as it  can be seen from Figure~\ref{figure: example y5 case(unconstrained) choiceA2}. The numerical values are provided in Table~\ref{table: example y5 case(unconstrained) choiceA2}.

\begin{table}[p]
\caption{Example~\ref{example: y power 5}  Case~1 with choice~\textbf{A1} for $y_0$: The values of  $\|\bar p_h\|_{L^6} $, $\eta(\alpha)$ and $J(\bar u_h)$ for different values of $\alpha$.}
\label{table: example y5 case(unconstrained) choiceA1}
\begin{tabular}{ l  c  c  c}
\toprule
$\alpha$ &	  $\|\bar p_h\|_{L^6}$&        $\eta(\alpha)$ &   $J(\bar u_h)$  \\
\midrule[1pt]

1.0e-06 &	  1.179795342411e-04 &	 	 8.697974773247e-04 &	 	 3.663839269975e-03   \\ 
1.0e-05 &	  1.040717291260e-03 &	 	 2.498914960443e-03 &	 	 3.314332555914e-02   \\
1.0e-04 &	  6.486412414763e-03 &	 	 7.179344781194e-03 &	 	 1.967178952607e-01   \\
1.0e-03 &	  1.467650352720e-02 &	 	 2.062614866979e-02 &	 	 4.320253853445e-01   \\
1.0e-02 &	  1.672495487678e-02 &	 	 5.925861229879e-02 &	 	 4.922543706340e-01   \\
1.0e-01 &	  1.696149575588e-02 &	 	 1.702490943800e-01 &	 	 4.992144828609e-01   \\
1.0e+00 &	  1.698552077353e-02 &	 	 4.891230660460e-01 &	 	 4.999213370331e-01   \\
1.0e+01 &	  1.698792705311e-02 &	 	 1.405243150394e+00 &	 	 4.999921325890e-01   \\
1.0e+02 &	  1.698816771892e-02 &	 	 4.037242258255e+00 &	 	 4.999992132478e-01   \\
1.0e+03 &	  1.698819178587e-02 &	 	 1.159893577654e+01 &	 	 4.999999213247e-01   \\

\bottomrule 
\end{tabular}
\end{table}

\begin{figure}[p]
        \centering
        \begin{subfigure}[h!]{0.5\textwidth}
                \includegraphics[trim = 40mm 80mm 30mm 70mm, clip, width=\textwidth]{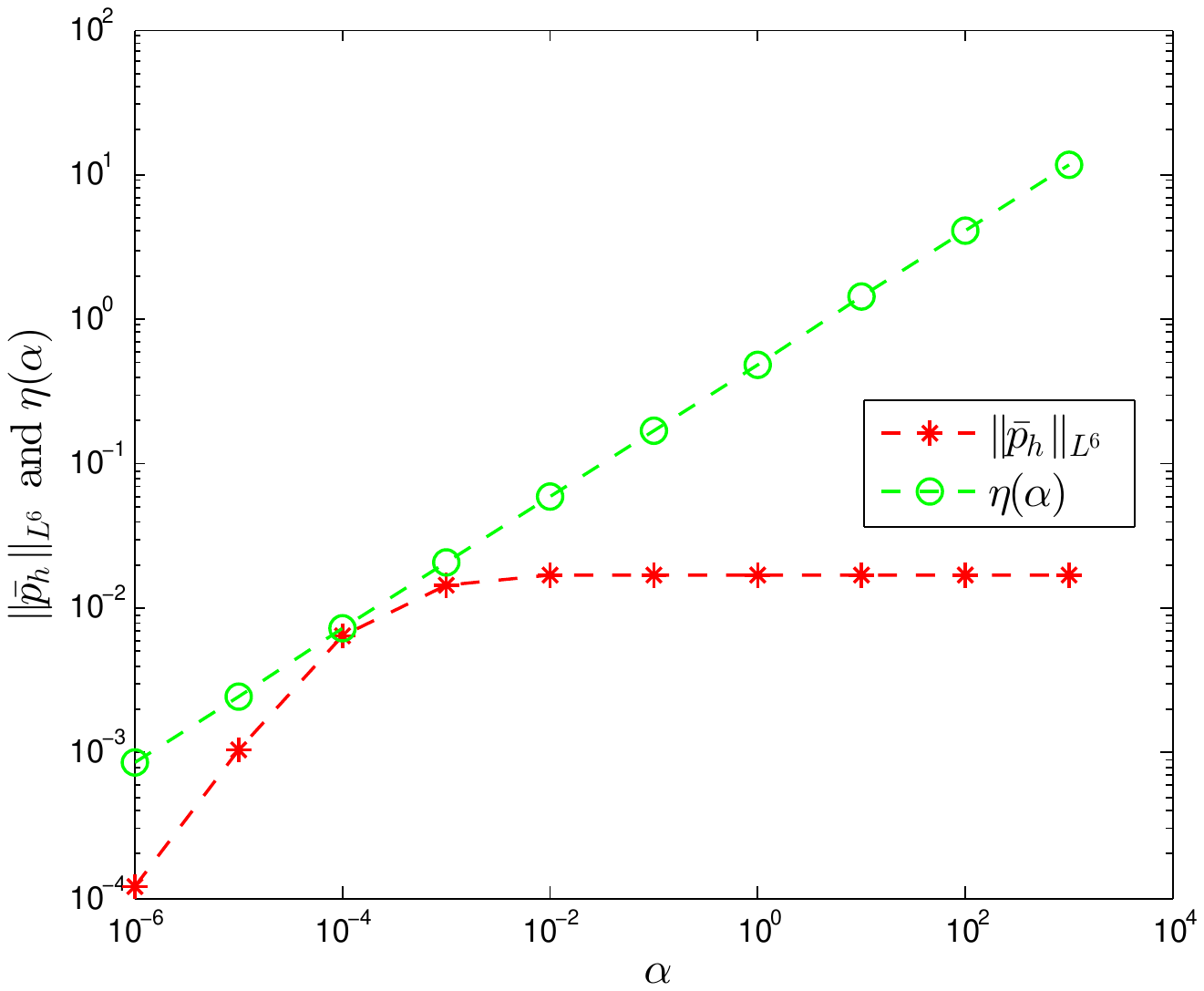}
                \caption{$\|\bar p_h\|_{L^6}$ and $\eta(\alpha)$ vs. $\alpha$.}
        \end{subfigure}%
        ~ 
        \begin{subfigure}[h!]{0.5\textwidth}
                \includegraphics[trim = 35mm 80mm 30mm 70mm, clip, width=\textwidth]{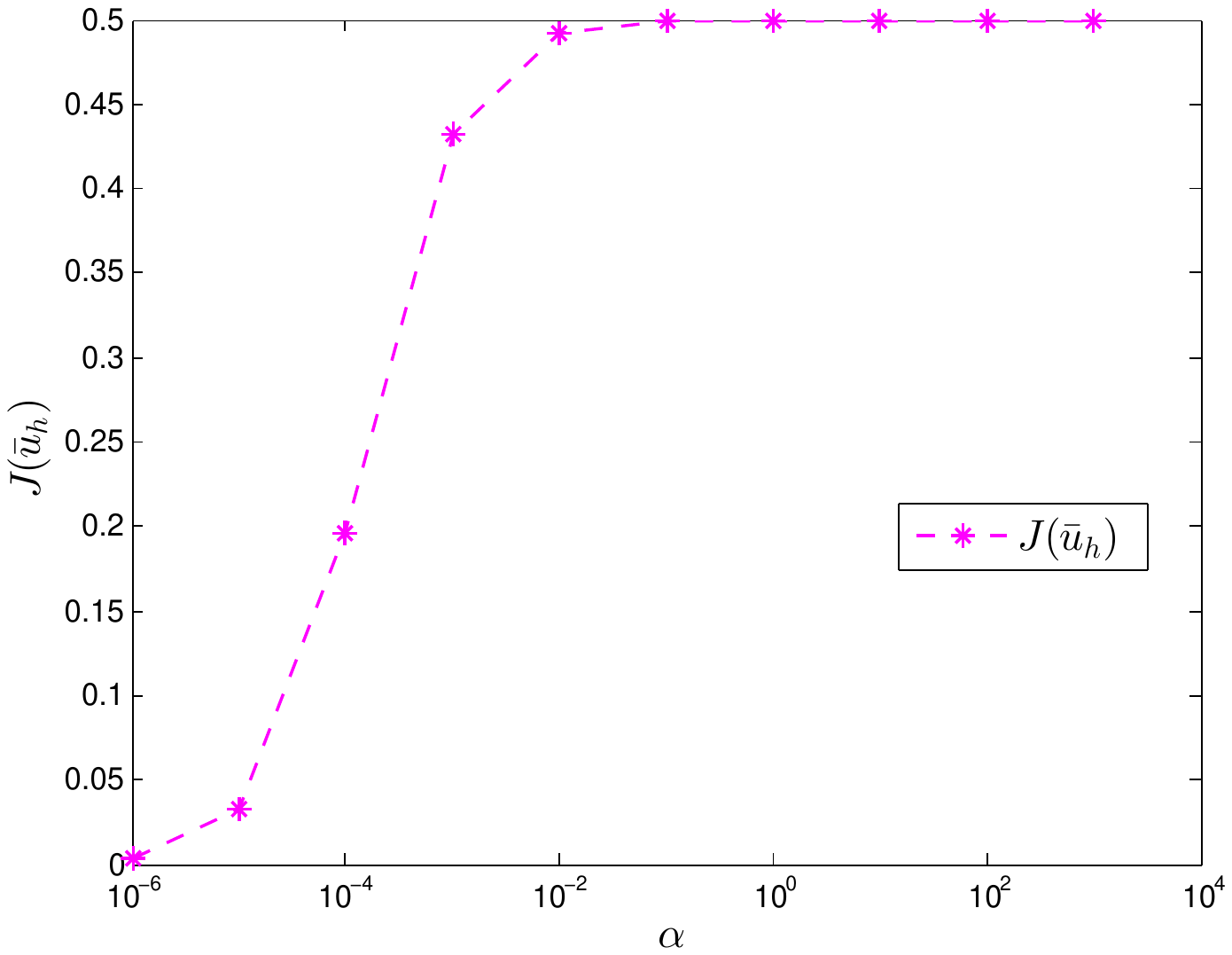}
                \caption{$J(\bar u_h)$  vs. $\alpha$.}
        \end{subfigure}
        
        \begin{subfigure}[h!]{0.5\textwidth}
                  \includegraphics[trim = 40mm 80mm 30mm 70mm, clip, width=\textwidth]{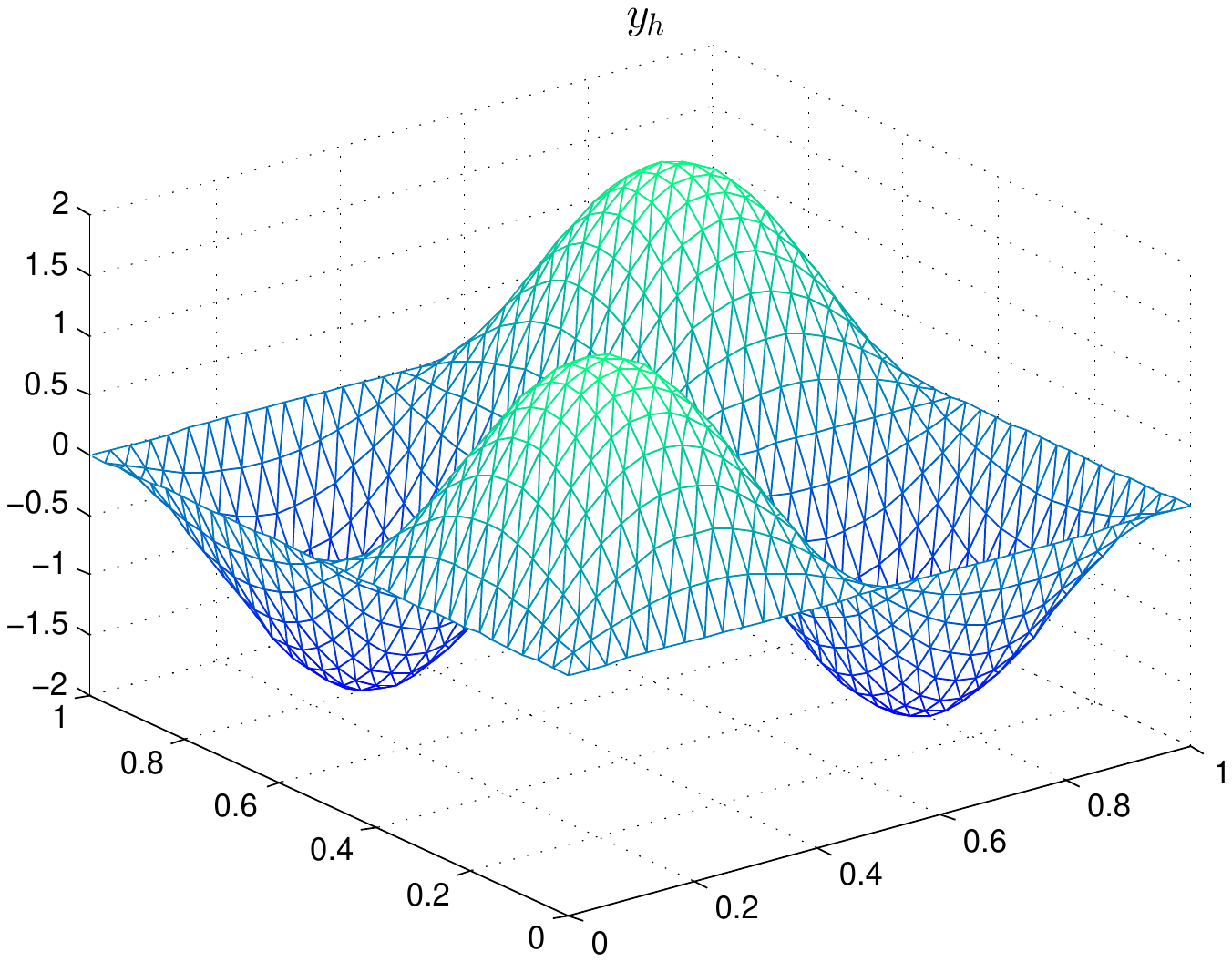}
                \caption{The optimal state $\bar y_h$.}
        \end{subfigure}~
        \begin{subfigure}[h!]{0.5\textwidth}
                  \includegraphics[trim = 40mm 80mm 30mm 70mm, clip, width=\textwidth]{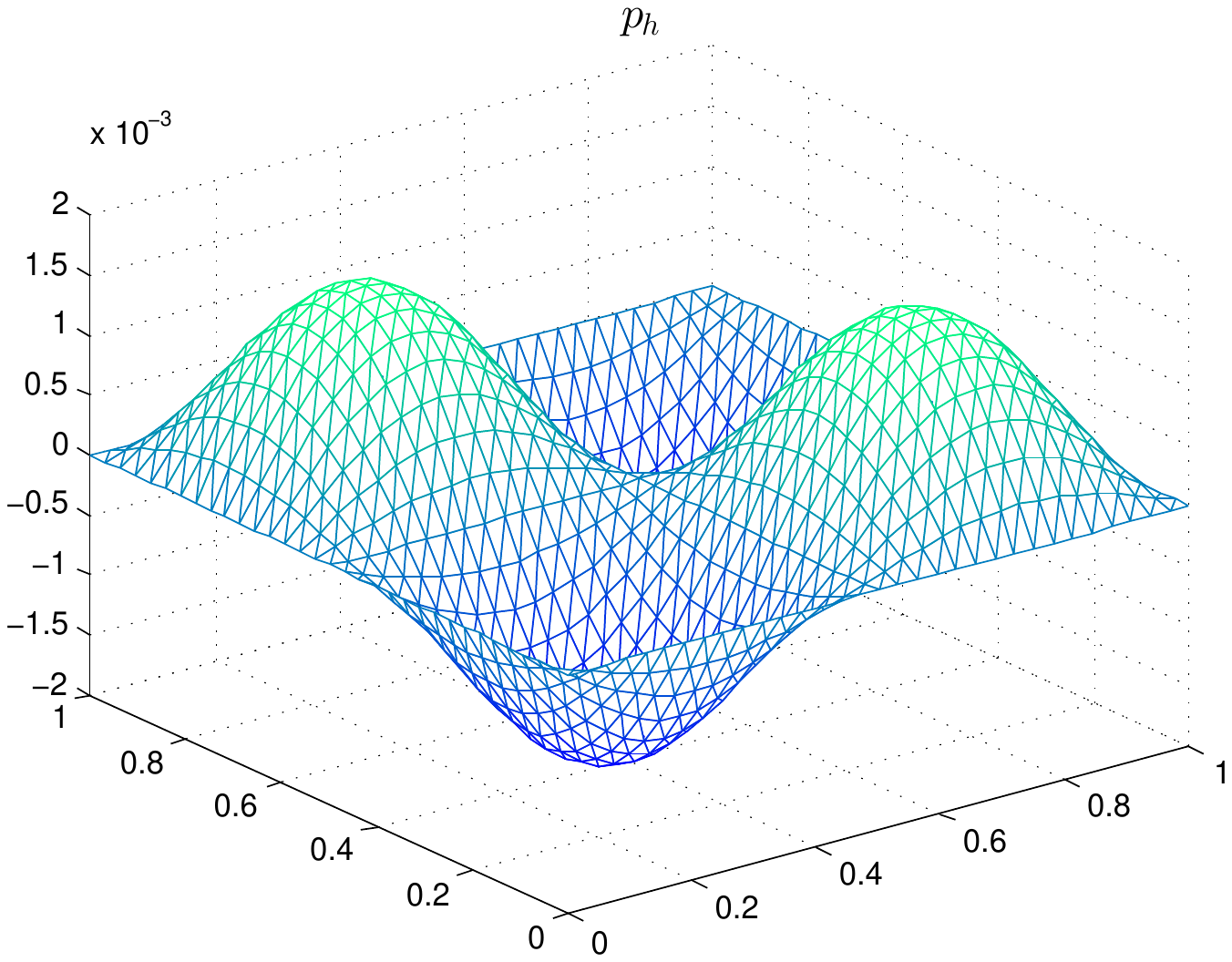}
                \caption{The adjoint state $\bar p_h$.}
        \end{subfigure}
        
        \begin{subfigure}[h!]{0.5\textwidth}
                  \includegraphics[trim = 40mm 80mm 30mm 70mm, clip, width=\textwidth]{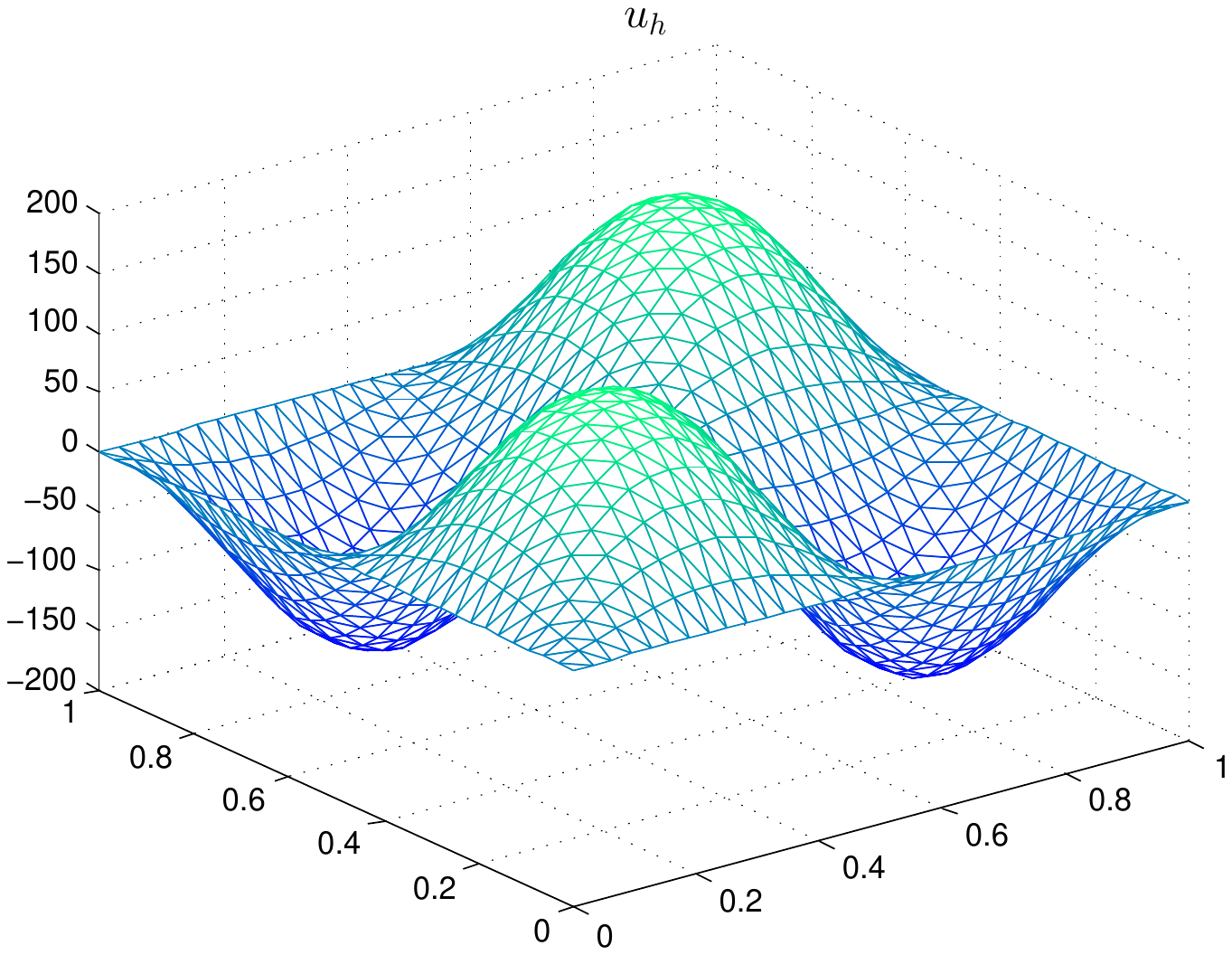}
                \caption{The optimal control $\bar u_h$.}
        \end{subfigure}

        \caption{Example~\ref{example: y power 5} Case~1 with choice~\textbf{A1} for $y_0$: The values of $\|\bar p_h\|_{L^6}$, $\eta(\alpha)$ and $J(\bar u_h)$ vs. $\alpha$. The optimal state $\bar y_h$, the optimal control $\bar u_h$ and the adjoint state $\bar p_h$ for $\alpha=10^{-5}$.}
        \label{figure: example y5 case(unconstrained) choiceA1}
\end{figure}

\begin{table}[p]
\caption{Example~\ref{example: y power 5} Case~1 with choice~\textbf{A2} for $y_0$: The values of  $\|\bar p_h\|_{L^6} $, $\eta(\alpha)$ and $J(\bar u_h)$ for different values of $\alpha$.}
\label{table: example y5 case(unconstrained) choiceA2}
\begin{tabular}{ l  c  c  c}
\toprule
$\alpha$ &	  $\|\bar p_h\|_{L^6}$&        $\eta(\alpha)$ &   $J(\bar u_h)$  \\
\midrule[1pt]

1.0e-06 &	  5.510426875132e-03 &	 	 8.697974773247e-04 &	 	 1.185192313978e+02   \\
1.0e-05 &	  1.587525748968e-02 &	 	 2.498914960443e-03 &	 	 1.331807740335e+02   \\
1.0e-04 &	  4.474831409415e-02 &	 	 7.179344781194e-03 &	 	 1.473322027953e+02   \\
1.0e-03 &	  1.039480114464e-01 &	 	 2.062614866979e-02 &	 	 1.584387338104e+02   \\
1.0e-02 &	  2.428391864045e-01 &	 	 5.925861229879e-02 &	 	 1.626178840362e+02   \\
1.0e-01 &	  3.493646725426e-01 &	 	 1.702490943800e-01 &	 	 1.642025836782e+02   \\
1.0e+00 &	  3.554038724369e-01 &	 	 4.891230660460e-01 &	 	 1.644198119684e+02   \\
1.0e+01 &	  3.560155910725e-01 &	 	 1.405243150394e+00 &	 	 1.644419766411e+02   \\
1.0e+02 &	  3.560769159456e-01 &	 	 4.037242258255e+00 &	 	 1.644441976184e+02   \\
1.0e+03 &	  3.560830499750e-01 &	 	 1.159893577654e+01 &	 	 1.644444197614e+02   \\

\bottomrule 
\end{tabular}
\end{table}

\begin{figure}[p]
        \centering
        \begin{subfigure}[h!]{0.5\textwidth}
                \includegraphics[trim = 40mm 80mm 30mm 70mm, clip, width=\textwidth]{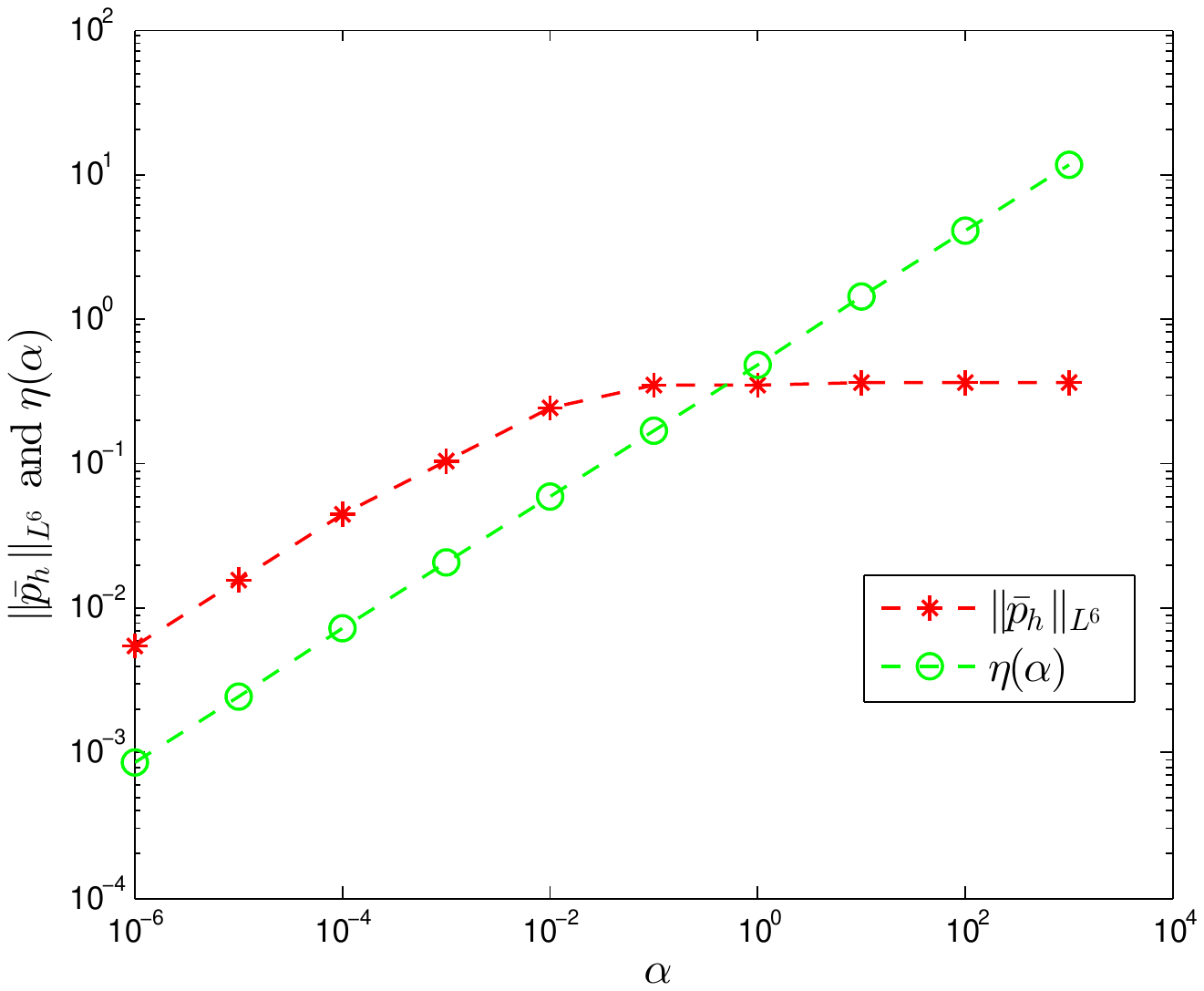}
                \caption{$\|\bar p_h\|_{L^6}$ and $\eta(\alpha)$ vs. $\alpha$.}
        \end{subfigure}%
        ~ 
        \begin{subfigure}[h!]{0.5\textwidth}
                \includegraphics[trim = 35mm 80mm 30mm 70mm, clip, width=\textwidth]{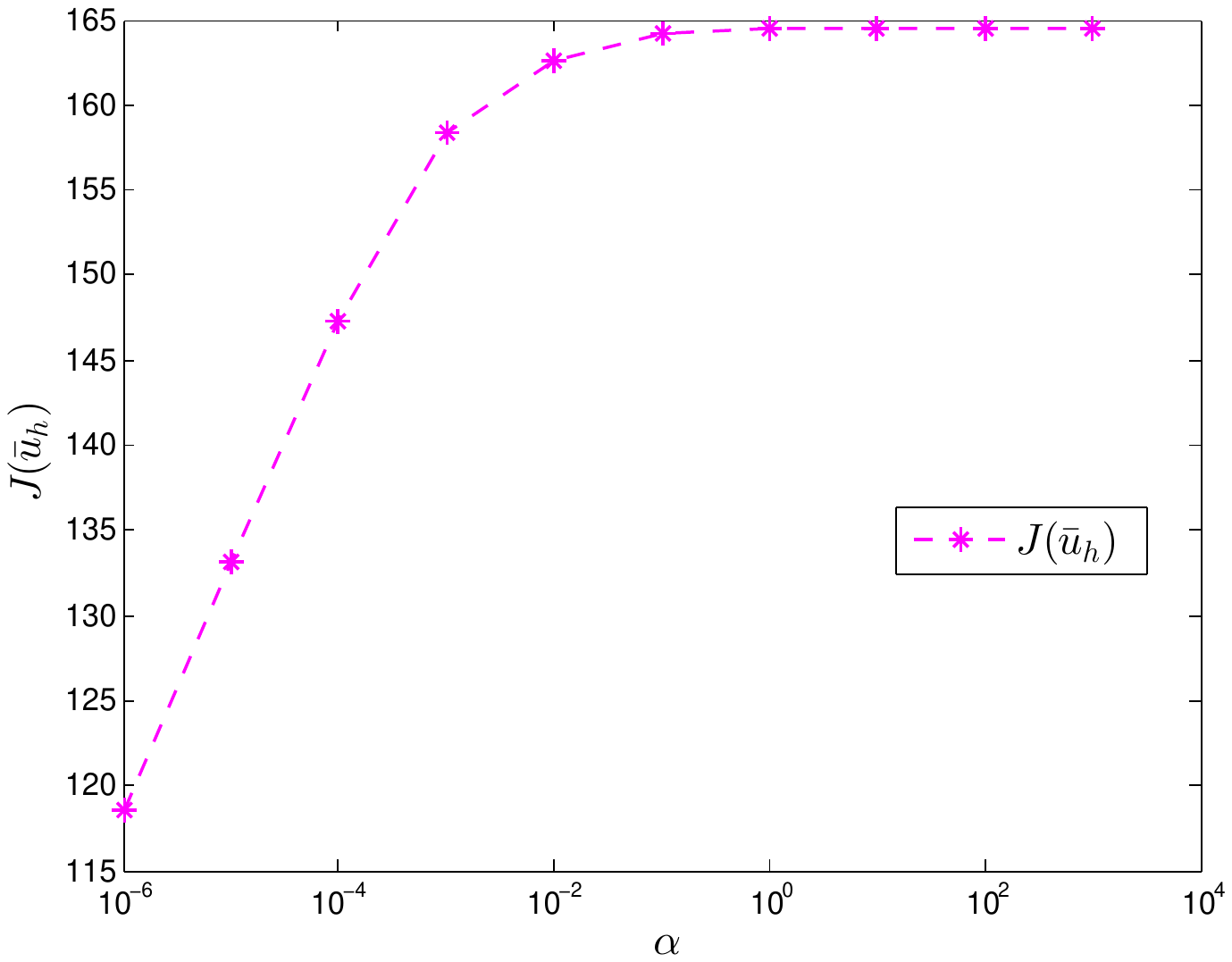}
                \caption{$J(\bar u_h)$  vs. $\alpha$.}
        \end{subfigure}
        
        \begin{subfigure}[h!]{0.5\textwidth}
                \includegraphics[trim = 40mm 80mm 30mm 70mm, clip, width=\textwidth]{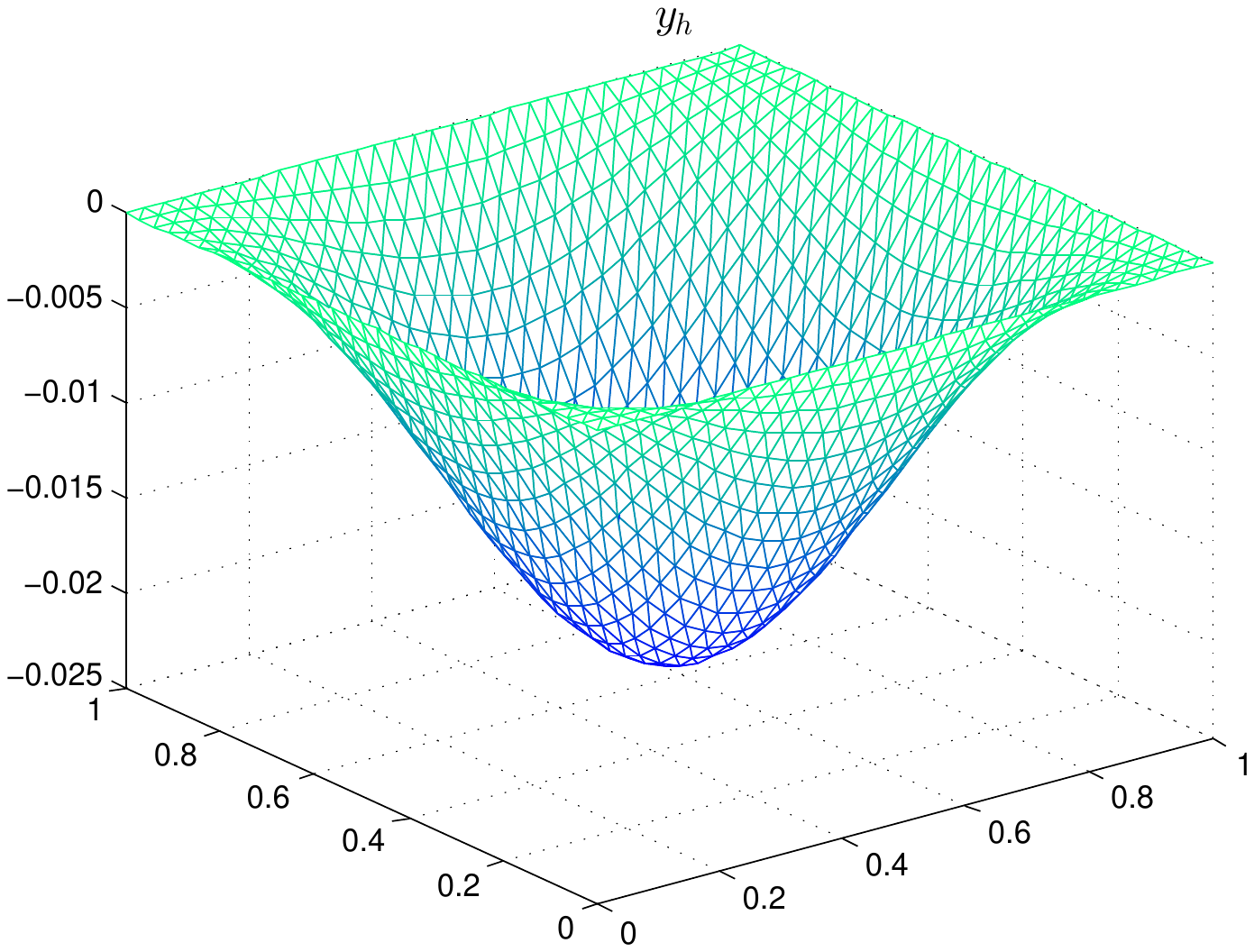}
                \caption{The optimal state $\bar y_h$.}
        \end{subfigure}~
        \begin{subfigure}[h!]{0.5\textwidth}
                \includegraphics[trim = 40mm 80mm 30mm 70mm, clip, width=\textwidth]{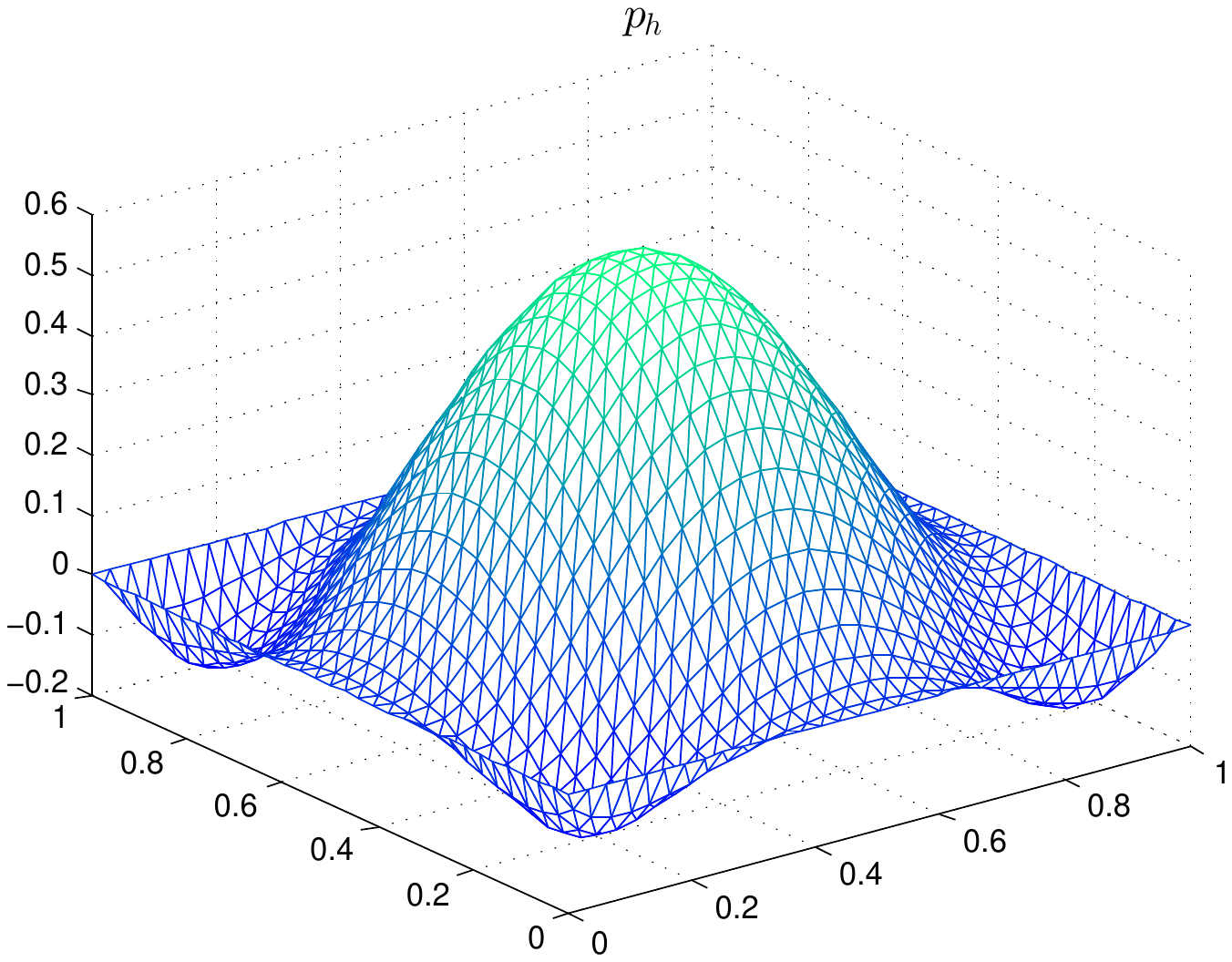}
                \caption{The adjoint state $\bar p_h$.}
        \end{subfigure}
        \begin{subfigure}[h!]{0.5\textwidth}
                \includegraphics[trim = 40mm 80mm 30mm 70mm, clip, width=\textwidth]{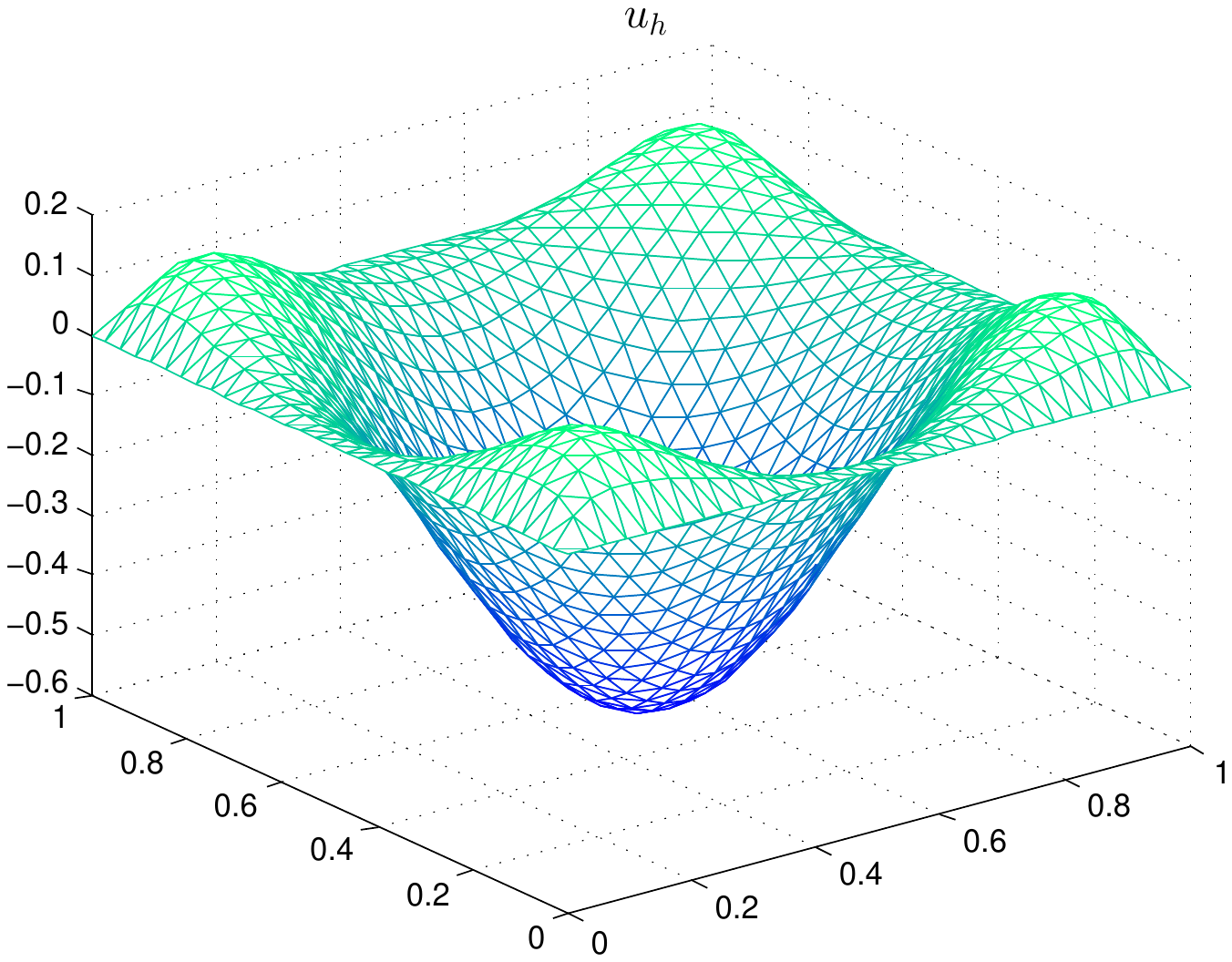}
                \caption{The optimal control $\bar u_h$.}
        \end{subfigure}
        
        \caption{Example~\ref{example: y power 5} Case~1 with choice~\textbf{A2} for $y_0$: The values of $\|\bar p_h\|_{L^6}$, $\eta(\alpha)$ and $J(\bar u_h)$ vs. $\alpha$. The optimal state $\bar y_h$, the optimal control $\bar u_h$ and the adjoint state $\bar p_h$ for $\alpha=1$.}
        \label{figure: example y5 case(unconstrained) choiceA2}
\end{figure}


\noindent
\textbf{Case~2} (constrained control)
In this case we consider constraints only on the control, we set
\begin{align*}
u_a & = -5, \\
u_b & = 5, \\
y_b &=-y_a=\infty.
\end{align*}
Table~\ref{table: example y5 case(constrained control) choiceA1} shows the values of  $\|\bar p_h\|_{L^6} $, $\eta(\alpha)$ and $J(\bar u_h)$ computed for different values of $\alpha$ with choice~\textbf{A1} for $y_0$. The graphical illustration of these findings are shown in Figure~\ref{figure: example y5 case(constrained control) choiceA1}. We see that $\bar u_h$ is a global minimum for $\alpha$ approximately greater than $10^{-3}$. The numerical results associated with the choice~\textbf{A2} are given in Table~\ref{table: example y5 case(constrained control) choiceA2} and illustrated in Figure~\ref{figure: example y5 case(constrained control) choiceA2}. In this case $\bar u_h$ is a global minimum for $\alpha$ approximately greater than $1$.

\begin{table}[p]
\caption{Example~\ref{example: y power 5} Case~2 with choice~\textbf{A1} for $y_0$: The values of  $\|\bar p_h\|_{L^6} $, $\eta(\alpha)$ and $J(\bar u_h)$ for different values of $\alpha$.}
\label{table: example y5 case(constrained control) choiceA1}
\begin{tabular}{ l  c  c  c}
\toprule
$\alpha$ &	  $\|\bar p_h\|_{L^6}$&        $\eta(\alpha)$ &   $J(\bar u_h)$  \\
\midrule[1pt]

1.0e-06 &	  1.613825290585e-02 &	 	 8.697974773247e-04 &	 	 4.507855415302e-01   \\
1.0e-05 &	  1.613824266503e-02 &	 	 2.498914960443e-03 &	 	 4.508885528139e-01   \\
1.0e-04 &	  1.613816501602e-02 &	 	 7.179344781194e-03 &	 	 4.519051721159e-01   \\
1.0e-03 &	  1.615565078678e-02 &	 	 2.062614866979e-02 &	 	 4.612661359991e-01   \\
1.0e-02 &	  1.672495487678e-02 &	 	 5.925861229879e-02 &	 	 4.922543706340e-01   \\
1.0e-01 &	  1.696149575588e-02 &	 	 1.702490943800e-01 &	 	 4.992144828609e-01   \\
1.0e+00 &	  1.698552077353e-02 &	 	 4.891230660460e-01 &	 	 4.999213370331e-01   \\
1.0e+01 &	  1.698792705311e-02 &	 	 1.405243150394e+00 &	 	 4.999921325890e-01   \\
1.0e+02 &	  1.698816771892e-02 &	 	 4.037242258255e+00 &	 	 4.999992132478e-01   \\
1.0e+03 &	  1.698819178587e-02 &	 	 1.159893577654e+01 &	 	 4.999999213247e-01   \\

\bottomrule 
\end{tabular}
\end{table}

\begin{figure}[p]
        \centering
        \begin{subfigure}[h!]{0.5\textwidth}
                \includegraphics[trim = 40mm 80mm 30mm 70mm, clip, width=\textwidth]{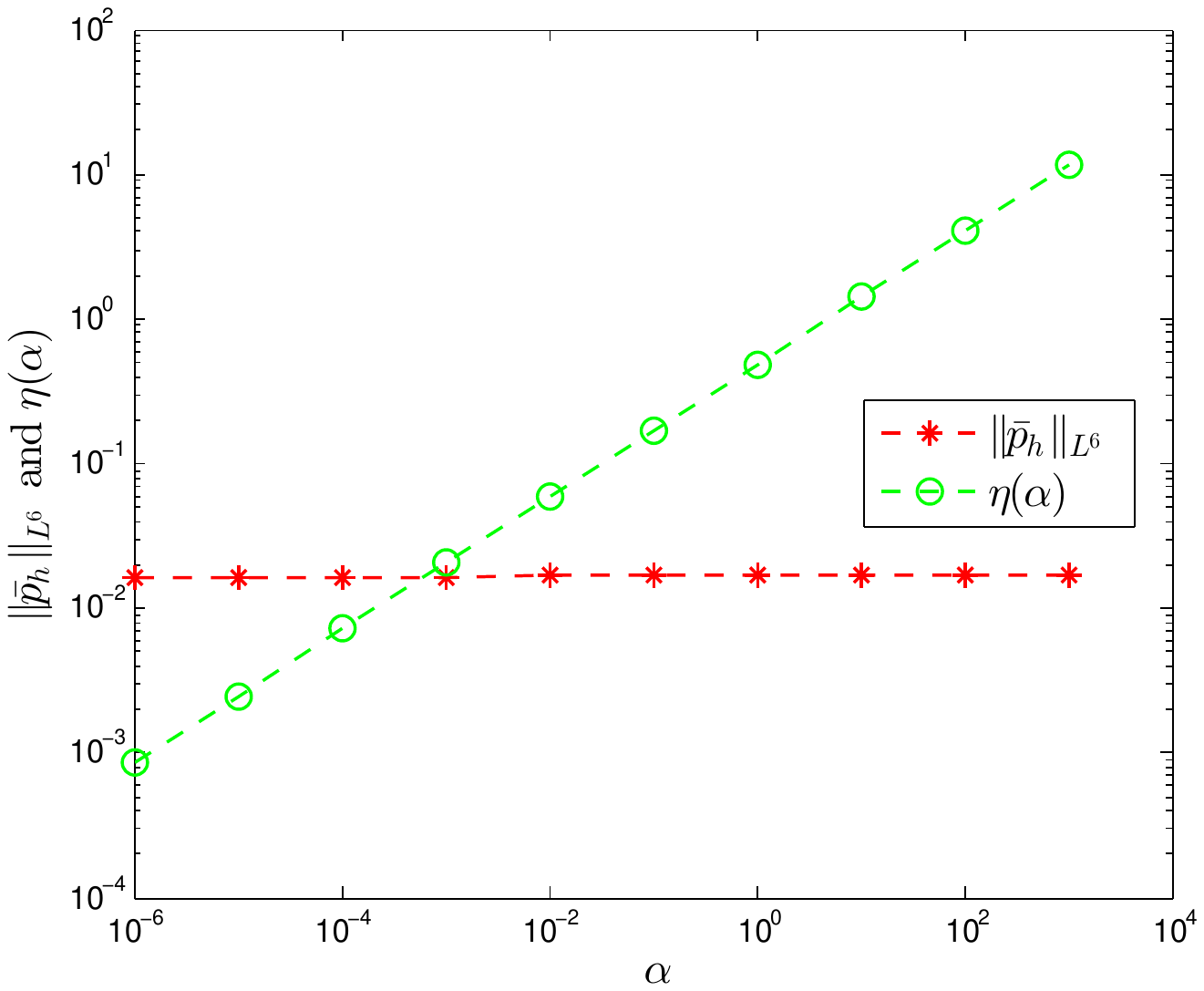}
                \caption{$\|\bar p_h\|_{L^6}$ and $\eta(\alpha)$ vs. $\alpha$.}
        \end{subfigure}%
        ~ 
        \begin{subfigure}[h!]{0.5\textwidth}
                \includegraphics[trim = 30mm 80mm 30mm 70mm, clip, width=\textwidth]{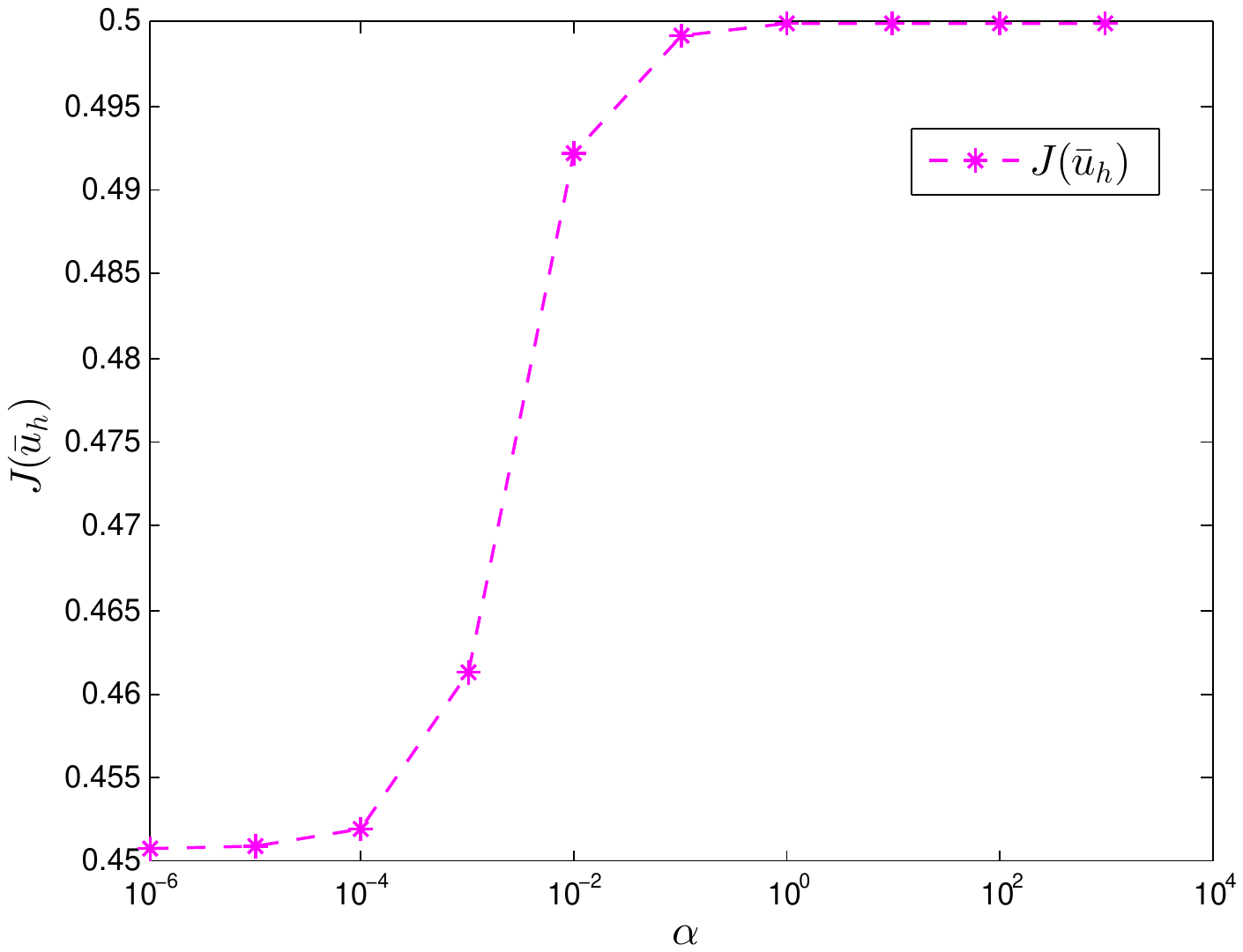}
                \caption{$J(\bar u_h)$  vs. $\alpha$.}
        \end{subfigure}
        
        \begin{subfigure}[h!]{0.5\textwidth}
                \includegraphics[trim = 40mm 80mm 30mm 70mm, clip, width=\textwidth]{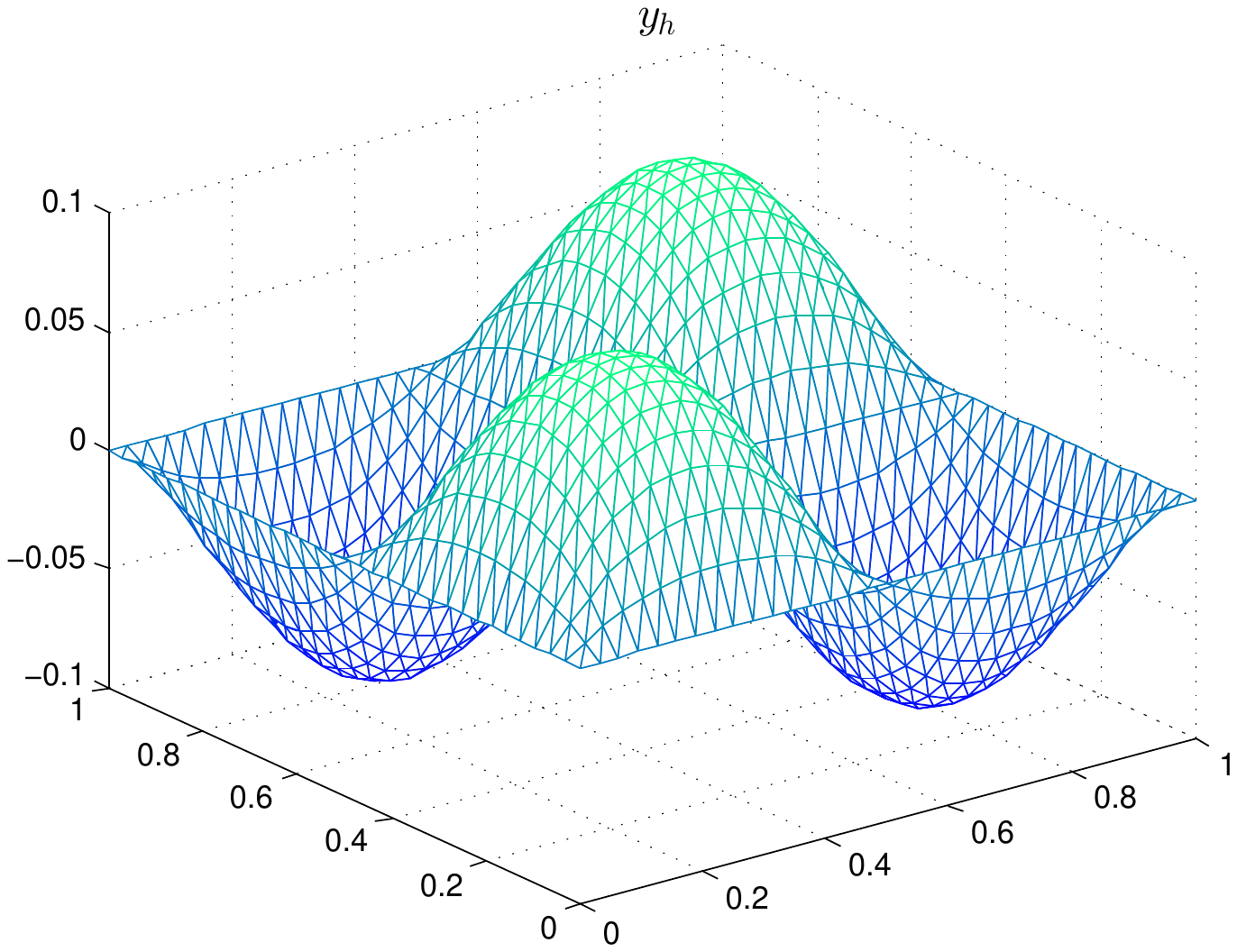}
                \caption{The optimal state $\bar y_h$.}
        \end{subfigure}~
        \begin{subfigure}[h!]{0.5\textwidth}
                \includegraphics[trim = 40mm 80mm 30mm 70mm, clip, width=\textwidth]{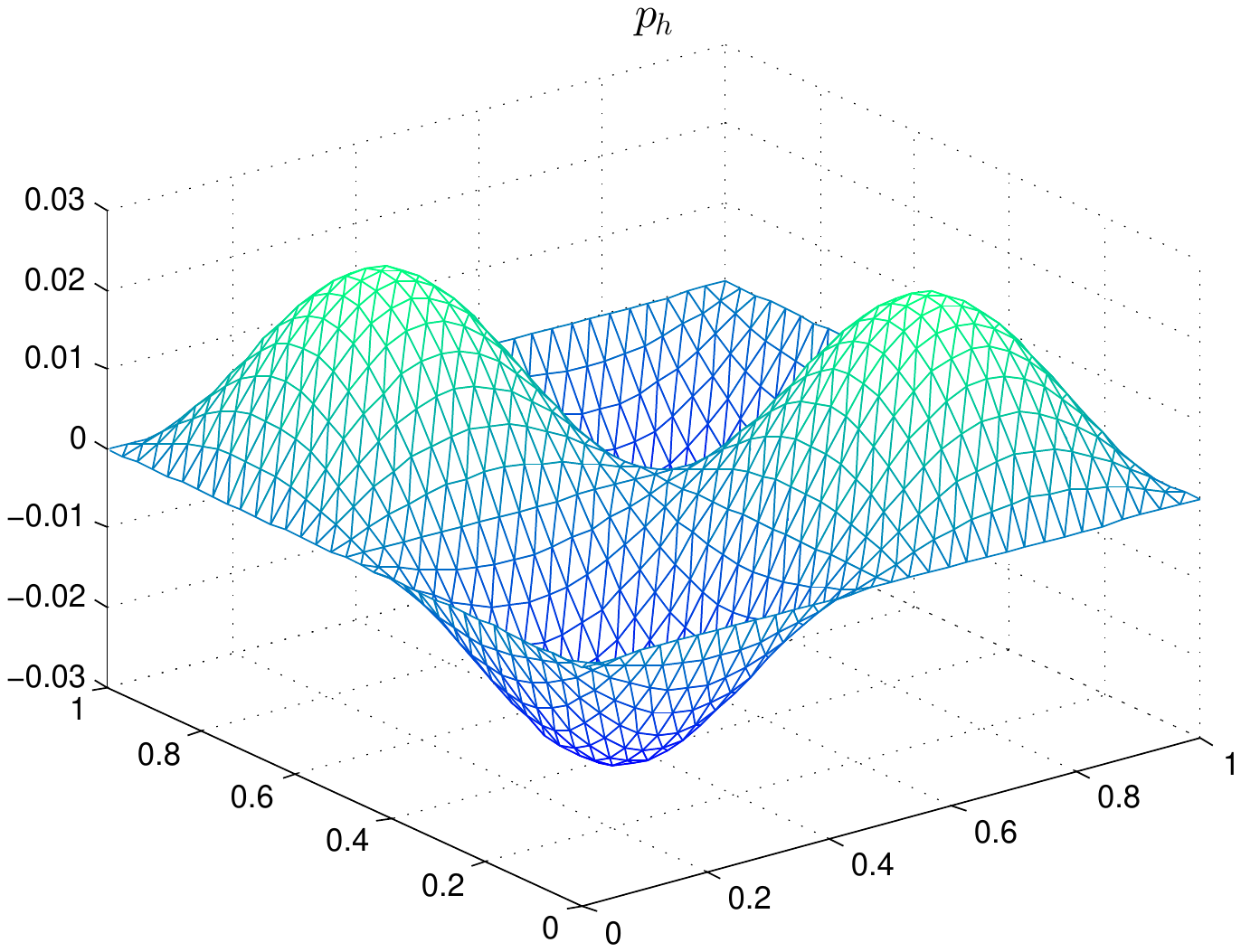}
                \caption{The adjoint state $\bar p_h$.}
        \end{subfigure}
        
        \begin{subfigure}[h!]{0.5\textwidth}
                \includegraphics[trim = 40mm 80mm 30mm 70mm, clip, width=\textwidth]{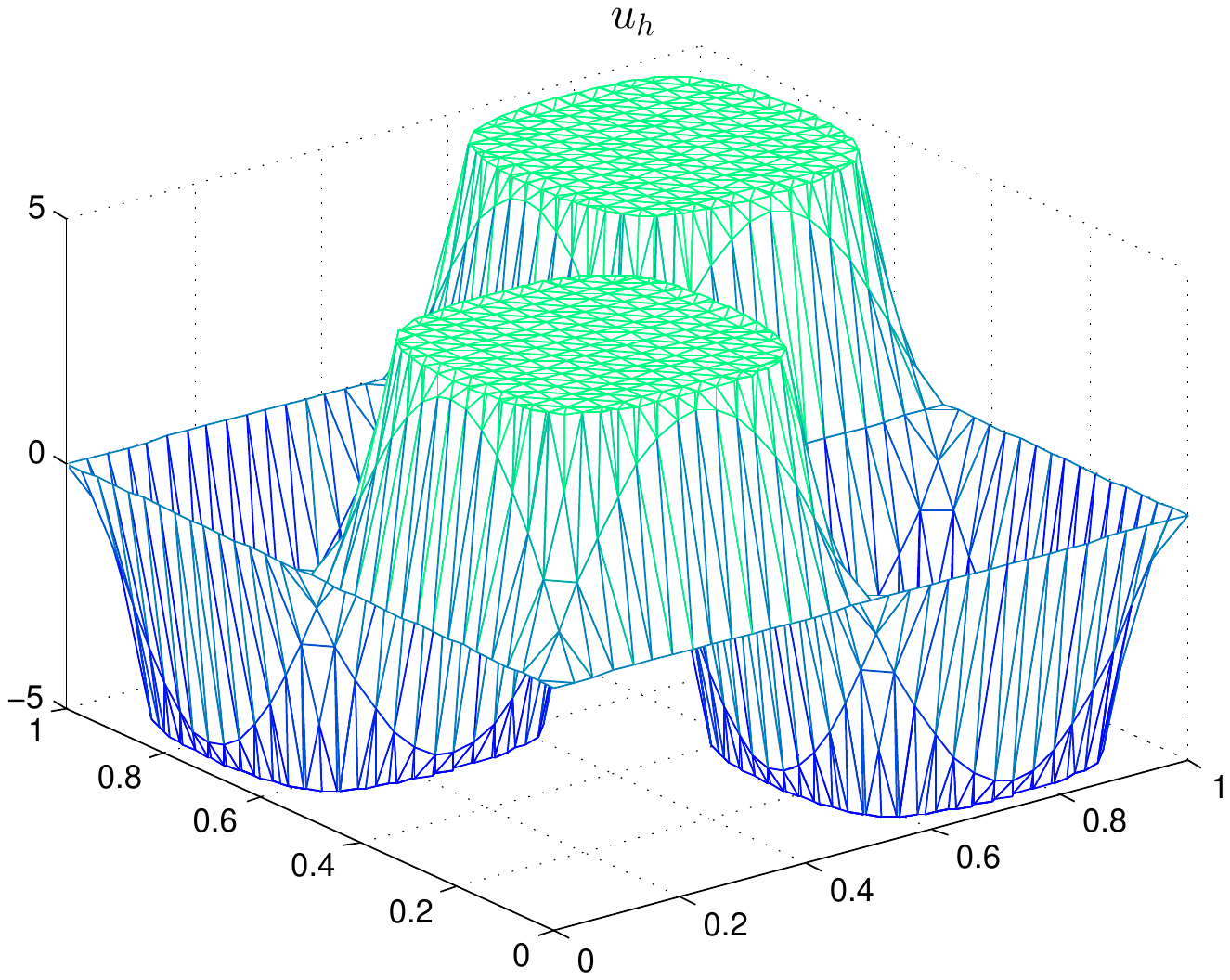}
                \caption{The optimal control $\bar u_h$.}
        \end{subfigure}~
        \begin{subfigure}[h!]{0.5\textwidth}
                \includegraphics[trim = 40mm 80mm 30mm 70mm, clip, width=\textwidth]{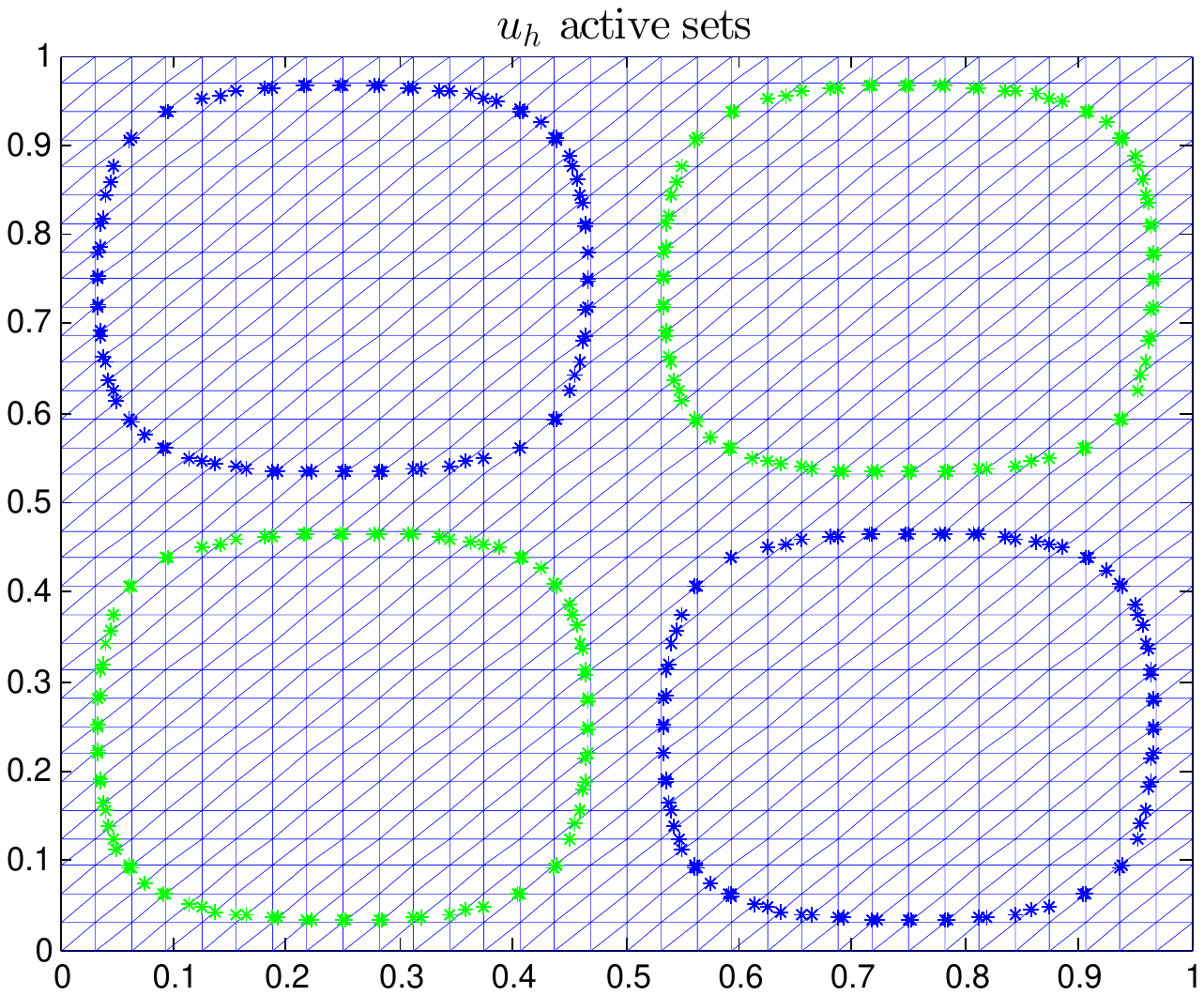}
                \caption{The control active sets ($\bar u_h=5$ inside the green circles and $\bar u_h=-5$ inside the blue ones).}
        \end{subfigure}

        \caption{Example~\ref{example: y power 5} Case~2 with choice~\textbf{A1} for $y_0$: The values of $\|\bar p_h\|_{L^6}$, $\eta(\alpha)$ and $J(\bar u_h)$ vs. $\alpha$. The optimal state $\bar y_h$, the optimal control $\bar u_h$, the control active sets, and the adjoint state $\bar p_h$ for $\alpha=10^{-3}$.}
        \label{figure: example y5 case(constrained control) choiceA1}
\end{figure}

\begin{table}[p]
\caption{Example~\ref{example: y power 5} Case~2 with choice~\textbf{A2} for $y_0$: The values of  $\|\bar p_h\|_{L^6} $, $\eta(\alpha)$ and $J(\bar u_h)$ for different values of $\alpha$.}
\label{table: example y5 case(constrained control) choiceA2}
\begin{tabular}{ l  c  c  c}
\toprule
$\alpha$ &	  $\|\bar p_h\|_{L^6}$&        $\eta(\alpha)$ &   $J(\bar u_h)$  \\
\midrule[1pt]

1.0e-06 &	  3.456649663660e-01 &	 	 8.697974773247e-04 &	 	 1.636832040856e+02   \\
1.0e-05 &	  3.456649990198e-01 &	 	 2.498914960443e-03 &	 	 1.636833073745e+02   \\
1.0e-04 &	  3.456663172695e-01 &	 	 7.179344781194e-03 &	 	 1.636843379602e+02   \\
1.0e-03 &	  3.456602557101e-01 &	 	 2.062614866979e-02 &	 	 1.636944643396e+02   \\
1.0e-02 &	  3.457537810584e-01 &	 	 5.925861229879e-02 &	 	 1.637855203878e+02   \\
1.0e-01 &	  3.494672249476e-01 &	 	 1.702490943800e-01 &	 	 1.642029145907e+02   \\
1.0e+00 &	  3.554038724369e-01 &	 	 4.891230660460e-01 &	 	 1.644198119684e+02   \\
1.0e+01 &	  3.560155910725e-01 &	 	 1.405243150394e+00 &	 	 1.644419766411e+02   \\
1.0e+02 &	  3.560769159456e-01 &	 	 4.037242258255e+00 &	 	 1.644441976184e+02   \\
1.0e+03 &	  3.560830499750e-01 &	 	 1.159893577654e+01 &	 	 1.644444197614e+02   \\

\bottomrule 
\end{tabular}
\end{table}

\begin{figure}[p]
        \centering
        \begin{subfigure}[h!]{0.5\textwidth}
                \includegraphics[trim = 40mm 80mm 30mm 70mm, clip, width=\textwidth]{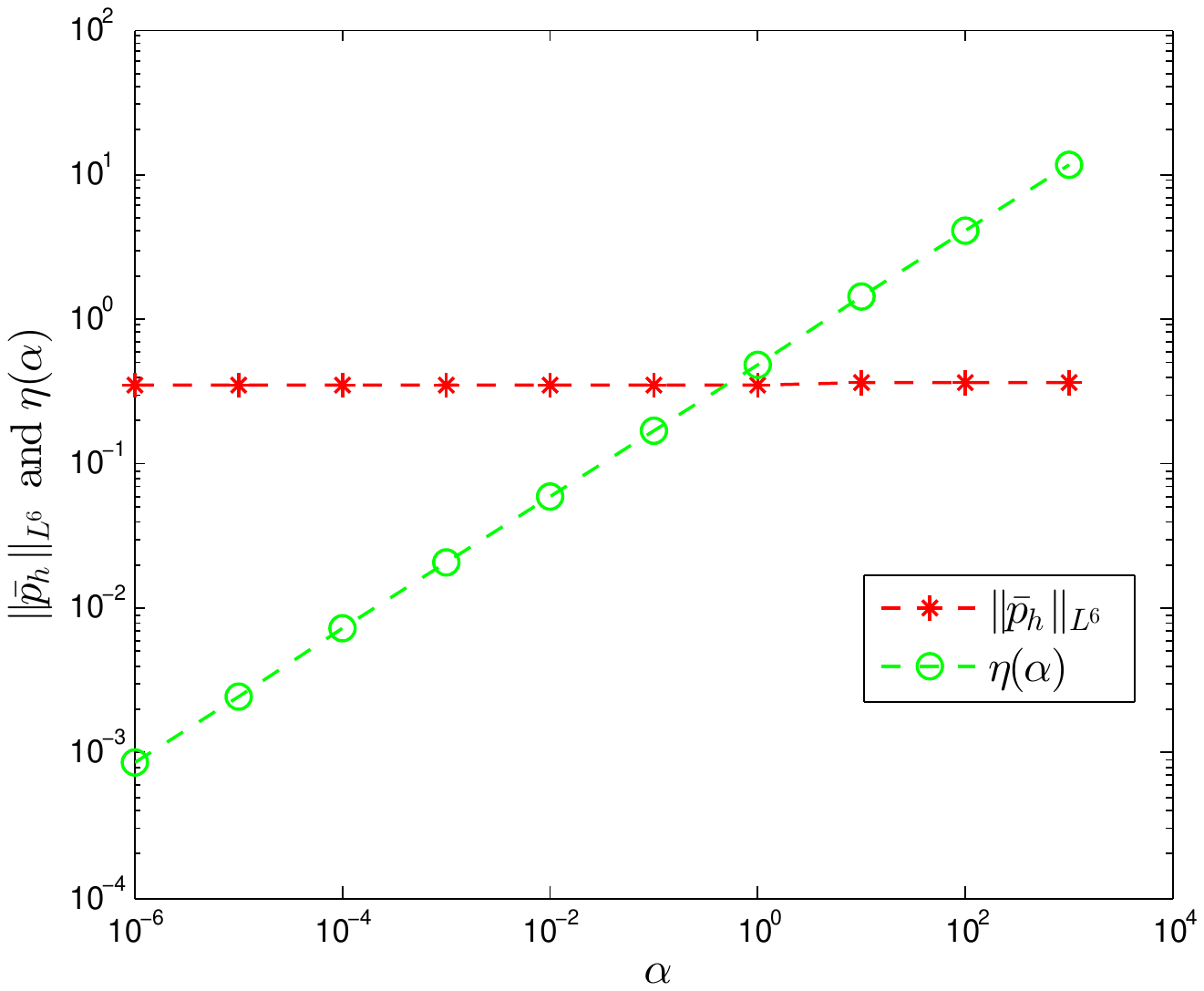}
                \caption{$\|\bar p_h\|_{L^6}$ and $\eta(\alpha)$ vs. $\alpha$.}
        \end{subfigure}%
        ~ 
        \begin{subfigure}[h!]{0.5\textwidth}
                \includegraphics[trim = 30mm 80mm 30mm 70mm, clip, width=\textwidth]{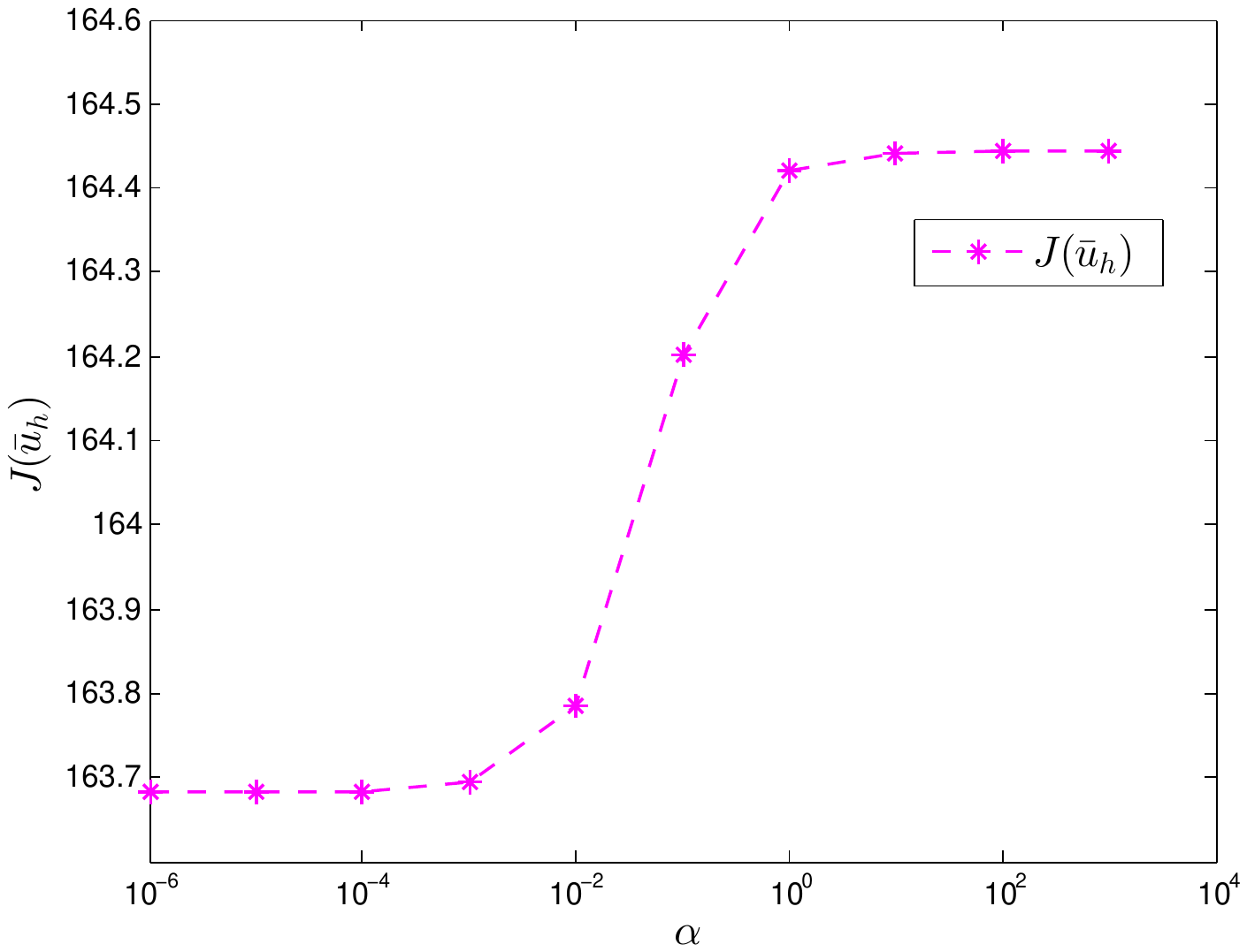}
                \caption{$J(\bar u_h)$  vs. $\alpha$.}
        \end{subfigure}
        
        \begin{subfigure}[h!]{0.5\textwidth}
                \includegraphics[trim = 40mm 80mm 30mm 70mm, clip, width=\textwidth]{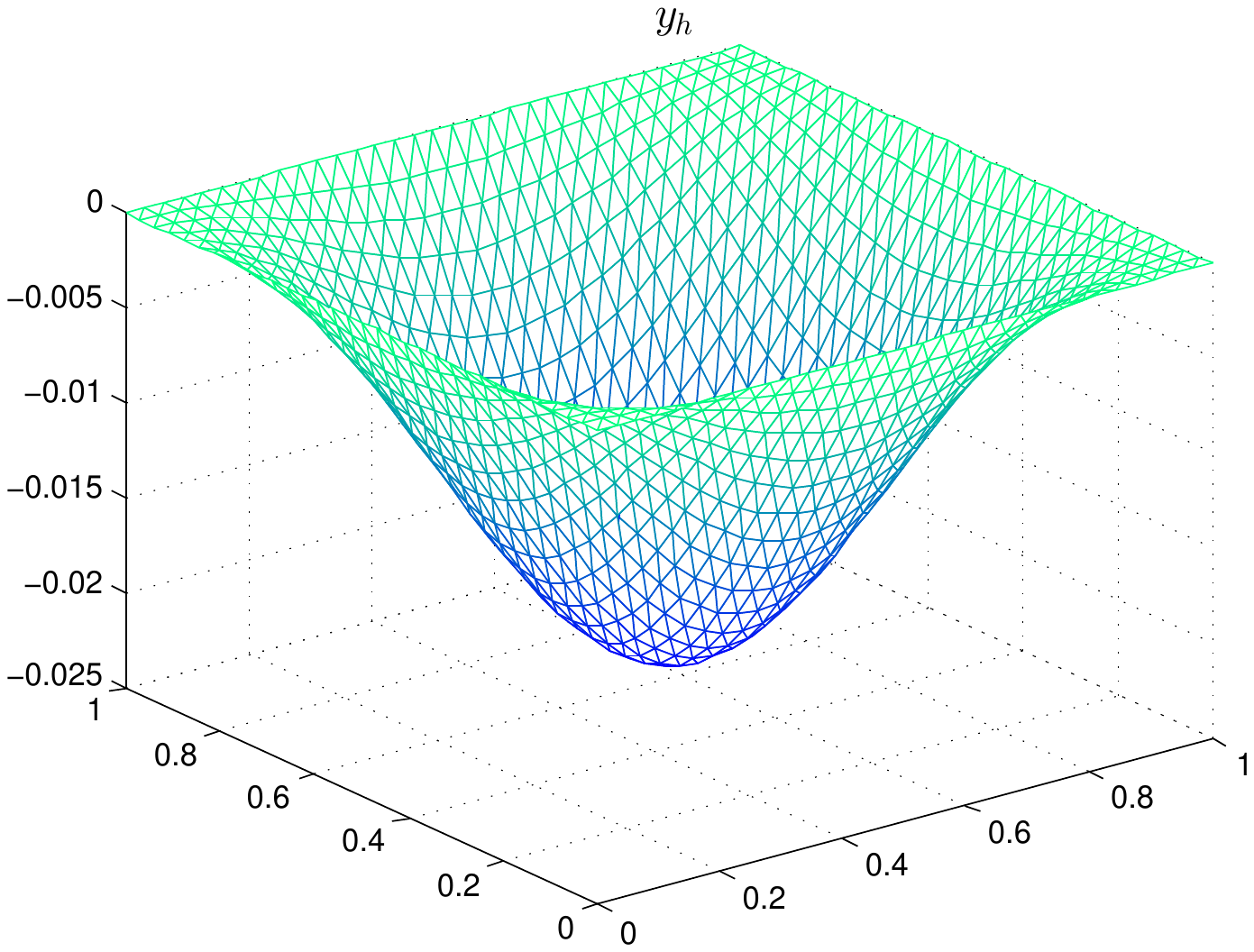}
                \caption{The optimal state $\bar y_h$.}
        \end{subfigure}~
        \begin{subfigure}[h!]{0.5\textwidth}
                \includegraphics[trim = 40mm 80mm 30mm 70mm, clip, width=\textwidth]{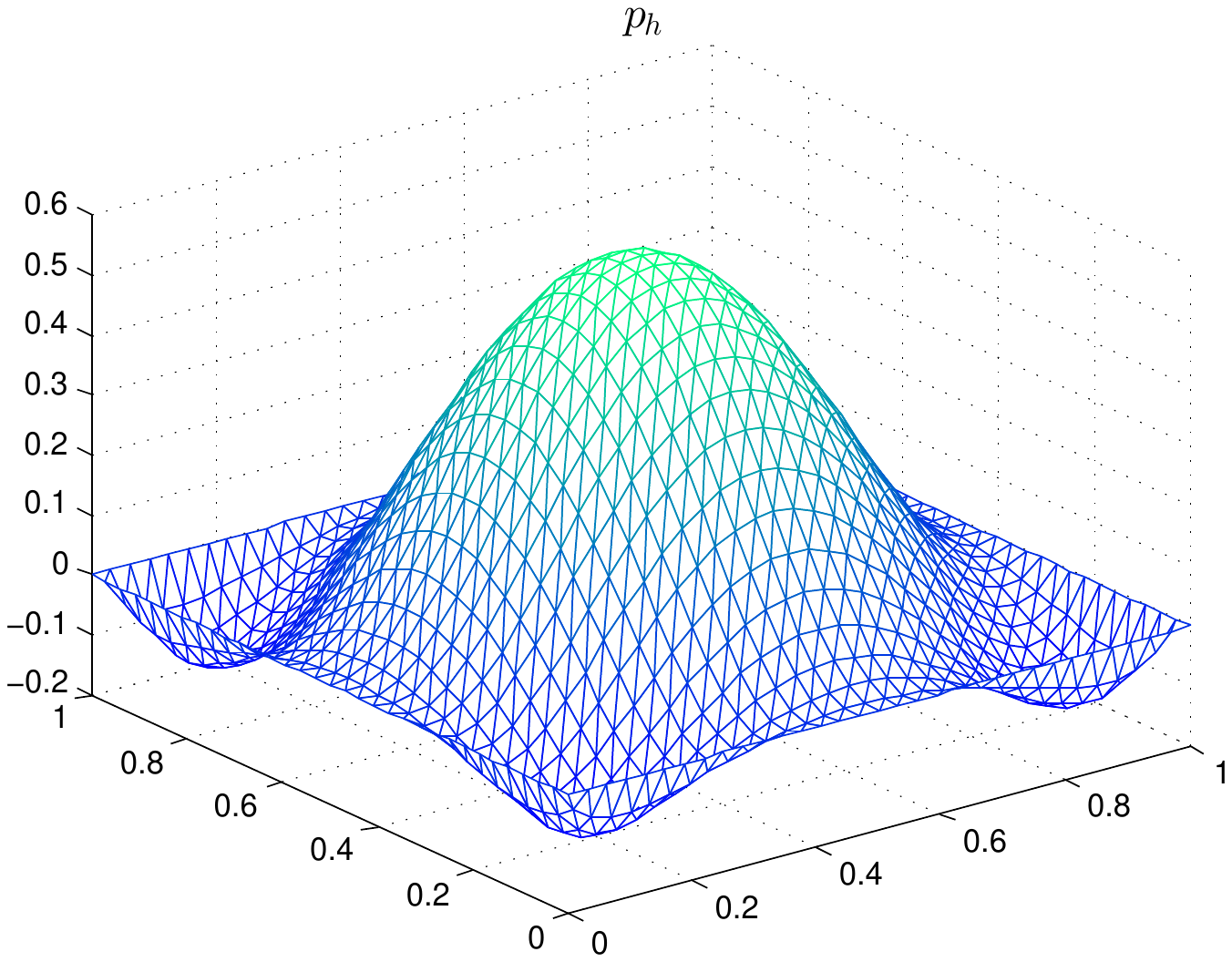}
                \caption{The adjoint state $\bar p_h$.}
        \end{subfigure}
        
        \begin{subfigure}[h!]{0.5\textwidth}
                \includegraphics[trim = 40mm 80mm 30mm 70mm, clip, width=\textwidth]{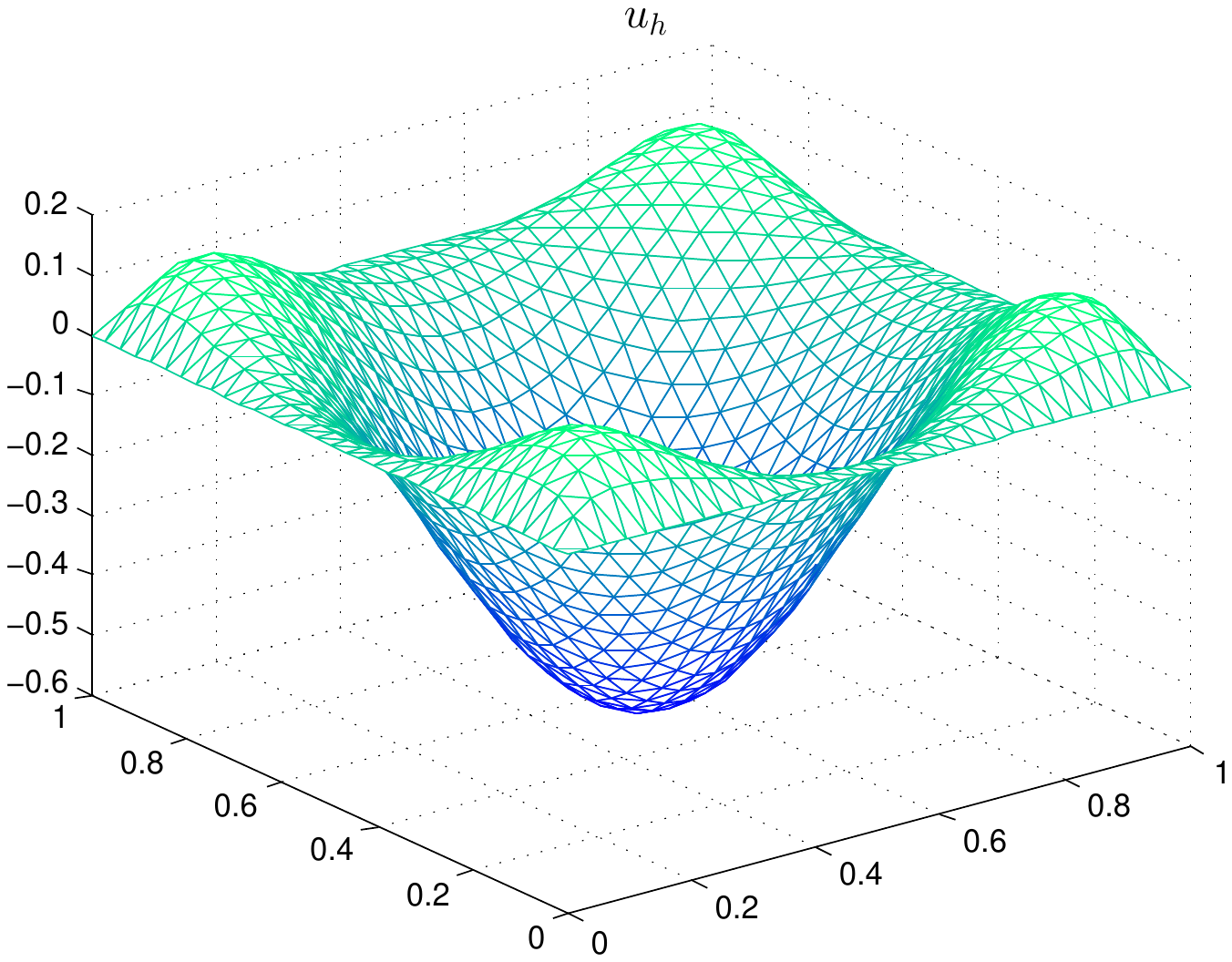}
                \caption{The optimal control $\bar u_h$.}
        \end{subfigure}
        
        \caption{Example~\ref{example: y power 5} Case~2 with choice~\textbf{A2} for $y_0$: The values of $\|\bar p_h\|_{L^6}$, $\eta(\alpha)$ and $J(\bar u_h)$ vs. $\alpha$. The optimal state $\bar y_h$, the optimal control $\bar u_h$ and the adjoint state $\bar p_h$ for $\alpha=1$.}
        \label{figure: example y5 case(constrained control) choiceA2}
\end{figure}


\noindent
\textbf{Case~3} (constrained state)
In this case we consider constrains only on the state, we set
\begin{align*}
u_b & = -u_a=\infty, \\
y_a &=-1, \\
y_b &=1.
\end{align*}
The numerical findings associated with choice~\textbf{A1} are provided in Table~\ref{table: example y5 case(constrained state) choiceA1} and illustrated in Figure~\ref{figure: example y5 case(constrained state) choiceA1}. We see that $\bar u_h$ is a global minimum for all values of $\alpha$. For the choice~\textbf{A2},  the results are given in Table~\ref{table: example y5 case(constrained state) choiceA2} and illustrated in Figure~\ref{figure: example y5 case(constrained state) choiceA2}. We see that $\bar u_h$ is a global minimum only for $\alpha$ approximately greater than $1$.

\begin{table}[p]
\caption{Example~\ref{example: y power 5} Case~3 with choice~\textbf{A1} for $y_0$:  The values of  $\|\bar p_h\|_{L^6} $, $\eta(\alpha)$ and $J(\bar u_h)$ for different values of $\alpha$.}
\label{table: example y5 case(constrained state) choiceA1}
\begin{tabular}{ l  c  c  c}
\toprule
$\alpha$ &	  $\|\bar p_h\|_{L^6}$&        $\eta(\alpha)$ &   $J(\bar u_h)$  \\
\midrule[1pt]

1.0e-06 &	  1.293594798095e-04 &	 	 8.697974773247e-04 &	 	 6.247856764953e-02   \\
1.0e-05 &	  8.673961098825e-04 &	 	 2.498914960443e-03 &	 	 8.936458658379e-02   \\
1.0e-04 &	  5.421978025542e-03 &	 	 7.179344781194e-03 &	 	 2.033602173575e-01   \\
1.0e-03 &	  1.467650352720e-02 &	 	 2.062614866979e-02 &	 	 4.320253853445e-01   \\
1.0e-02 &	  1.672495487678e-02 &	 	 5.925861229879e-02 &	 	 4.922543706340e-01   \\
1.0e-01 &	  1.696149575588e-02 &	 	 1.702490943800e-01 &	 	 4.992144828609e-01   \\
1.0e+00 &	  1.698552077353e-02 &	 	 4.891230660460e-01 &	 	 4.999213370331e-01   \\
1.0e+01 &	  1.698792705311e-02 &	 	 1.405243150394e+00 &	 	 4.999921325890e-01   \\
1.0e+02 &	  1.698816771892e-02 &	 	 4.037242258255e+00 &	 	 4.999992132478e-01   \\
1.0e+03 &	  1.698819178587e-02 &	 	 1.159893577654e+01 &	 	 4.999999213247e-01   \\

\bottomrule 
\end{tabular}
\end{table}

\begin{figure}[p]
        \centering
        \begin{subfigure}[h!]{0.5\textwidth}
                \includegraphics[trim = 40mm 80mm 30mm 70mm, clip, width=\textwidth]{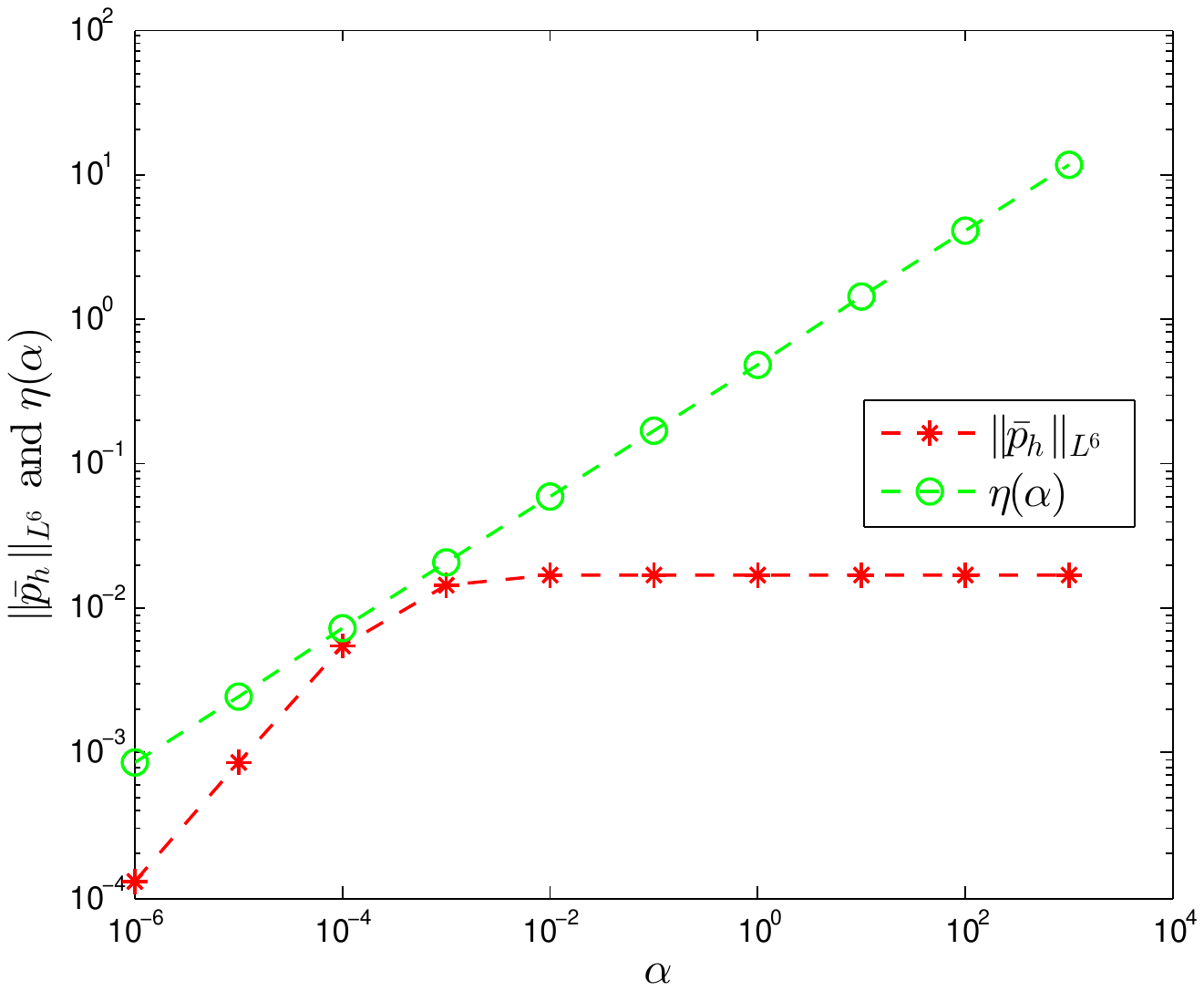}
                \caption{$\|\bar p_h\|_{L^6}$ and $\eta(\alpha)$ vs. $\alpha$.}
        \end{subfigure}%
        ~ 
        \begin{subfigure}[h!]{0.5\textwidth}
                \includegraphics[trim = 30mm 80mm 30mm 70mm, clip, width=\textwidth]{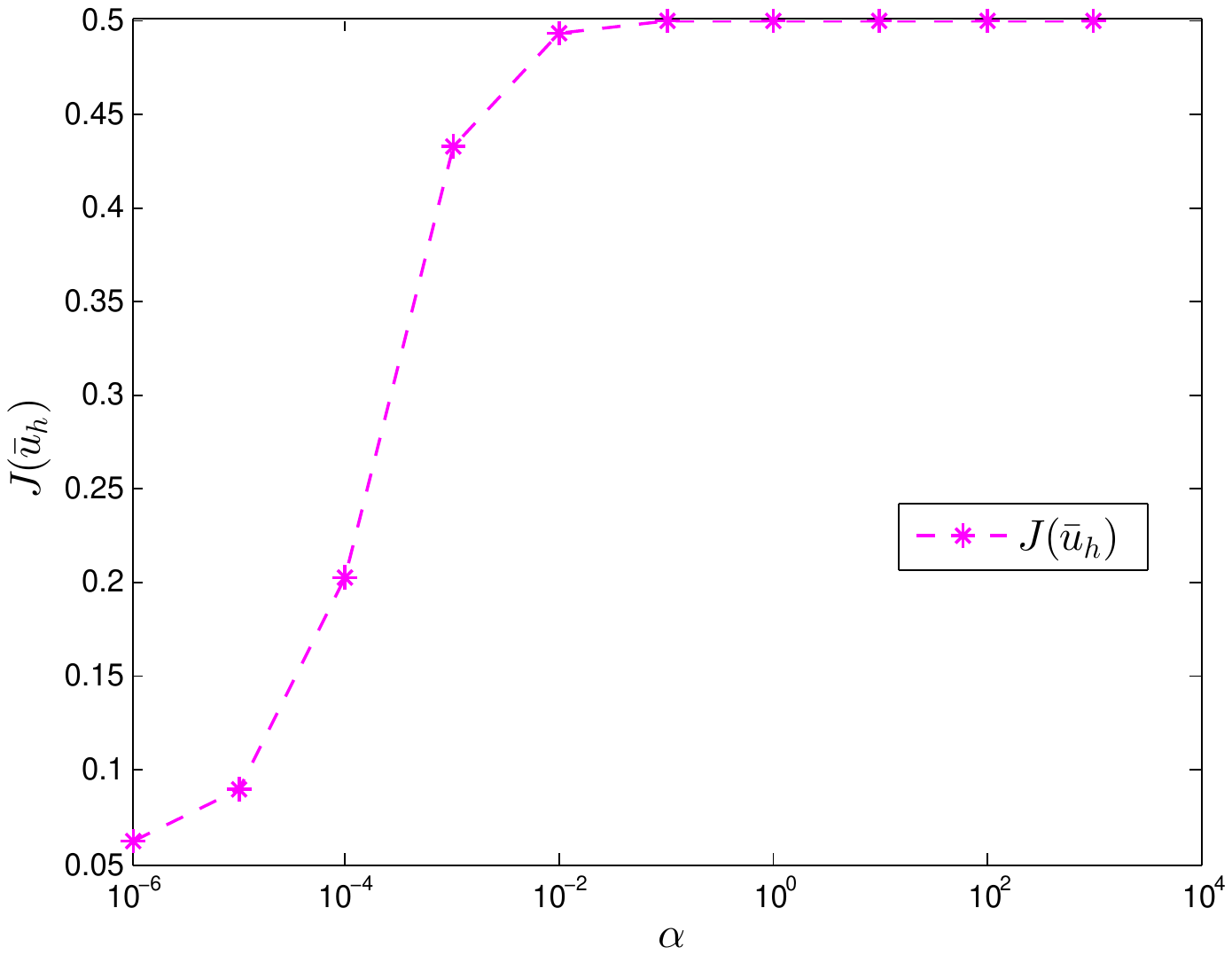}
                \caption{$J(\bar u_h)$  vs. $\alpha$.}
        \end{subfigure}
        
        \begin{subfigure}[h!]{0.5\textwidth}
                \includegraphics[trim = 40mm 80mm 30mm 70mm, clip, width=\textwidth]{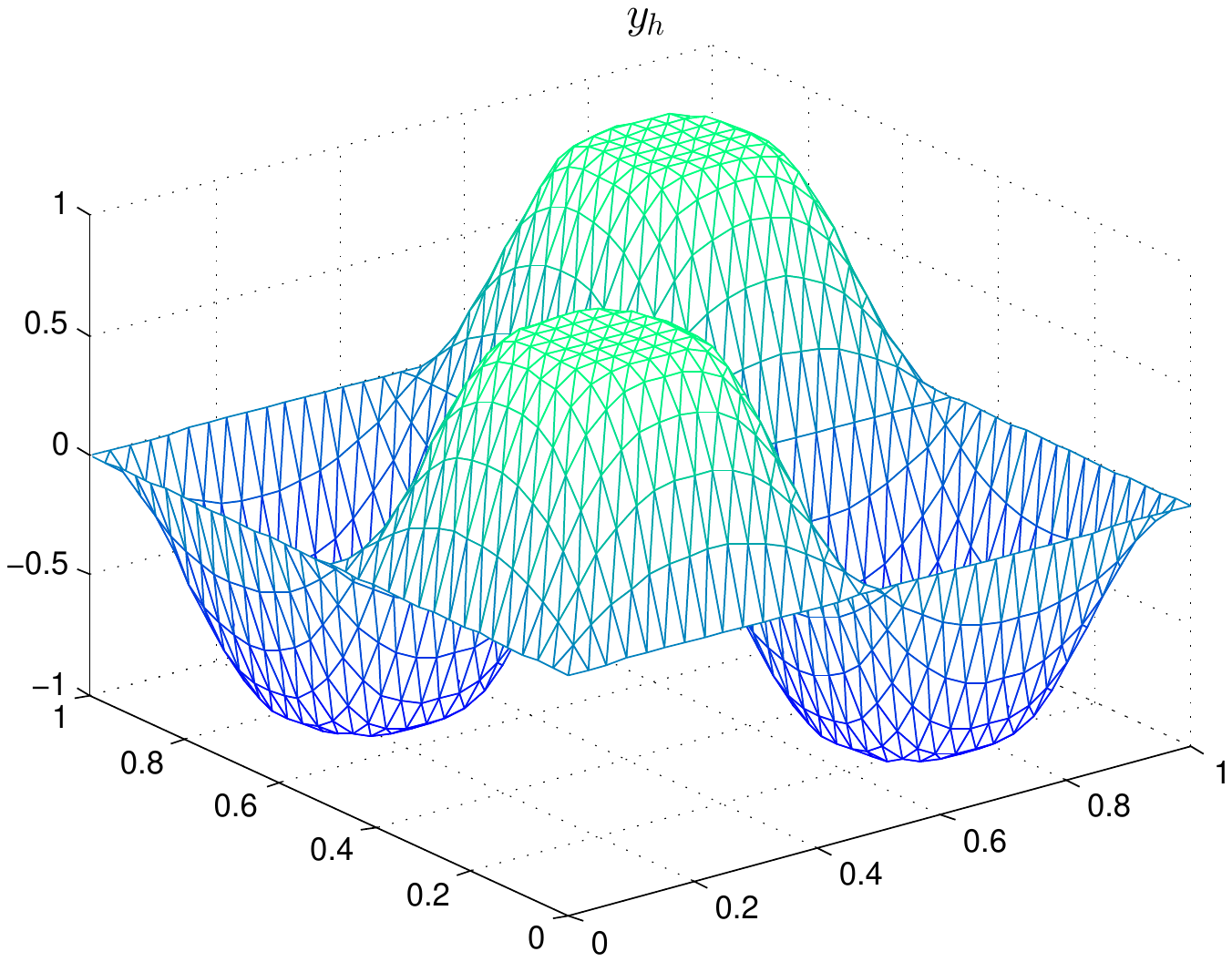}
                \caption{The optimal state $\bar y_h$.}
        \end{subfigure}~
        \begin{subfigure}[h!]{0.5\textwidth}
                \includegraphics[trim = 40mm 80mm 30mm 70mm, clip, width=\textwidth]{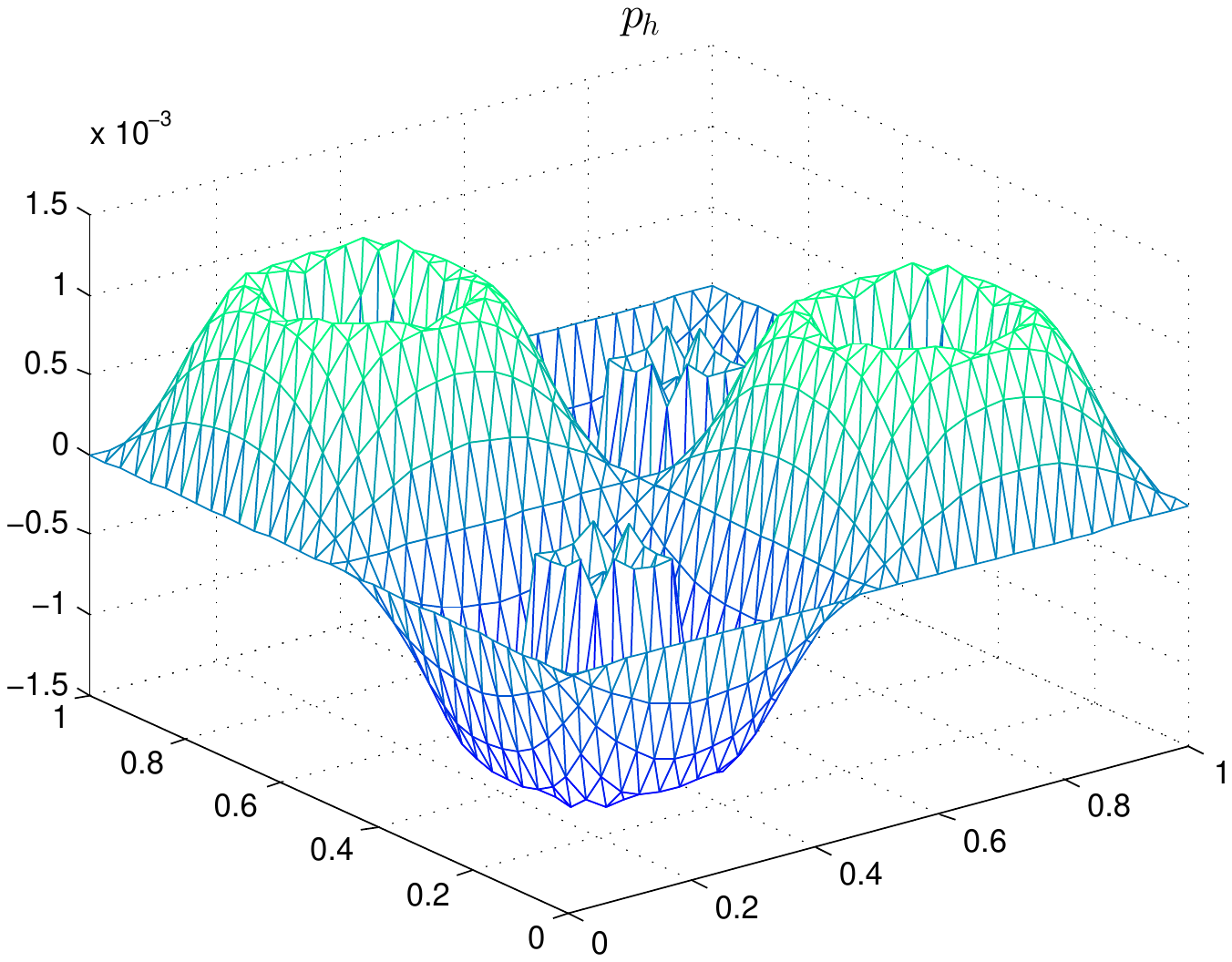}
                \caption{The adjoint state $\bar p_h$.}
        \end{subfigure}
        
        \begin{subfigure}[h!]{0.5\textwidth}
                \includegraphics[trim = 40mm 80mm 30mm 70mm, clip, width=\textwidth]{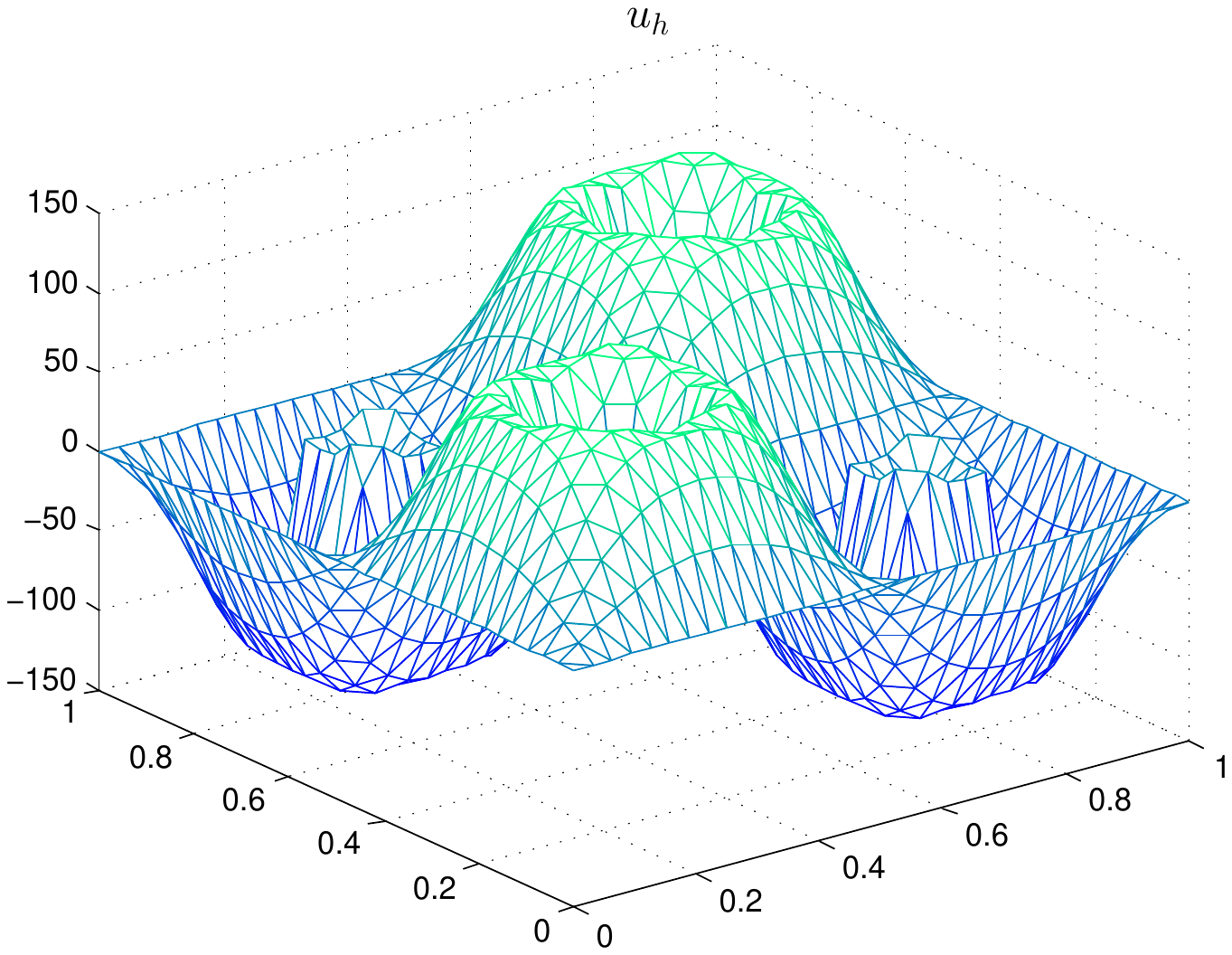}
                \caption{The optimal control $\bar u_h$.}
        \end{subfigure}~
        \begin{subfigure}[h!]{0.5\textwidth}
                \includegraphics[trim = 40mm 80mm 30mm 70mm, clip, width=\textwidth]{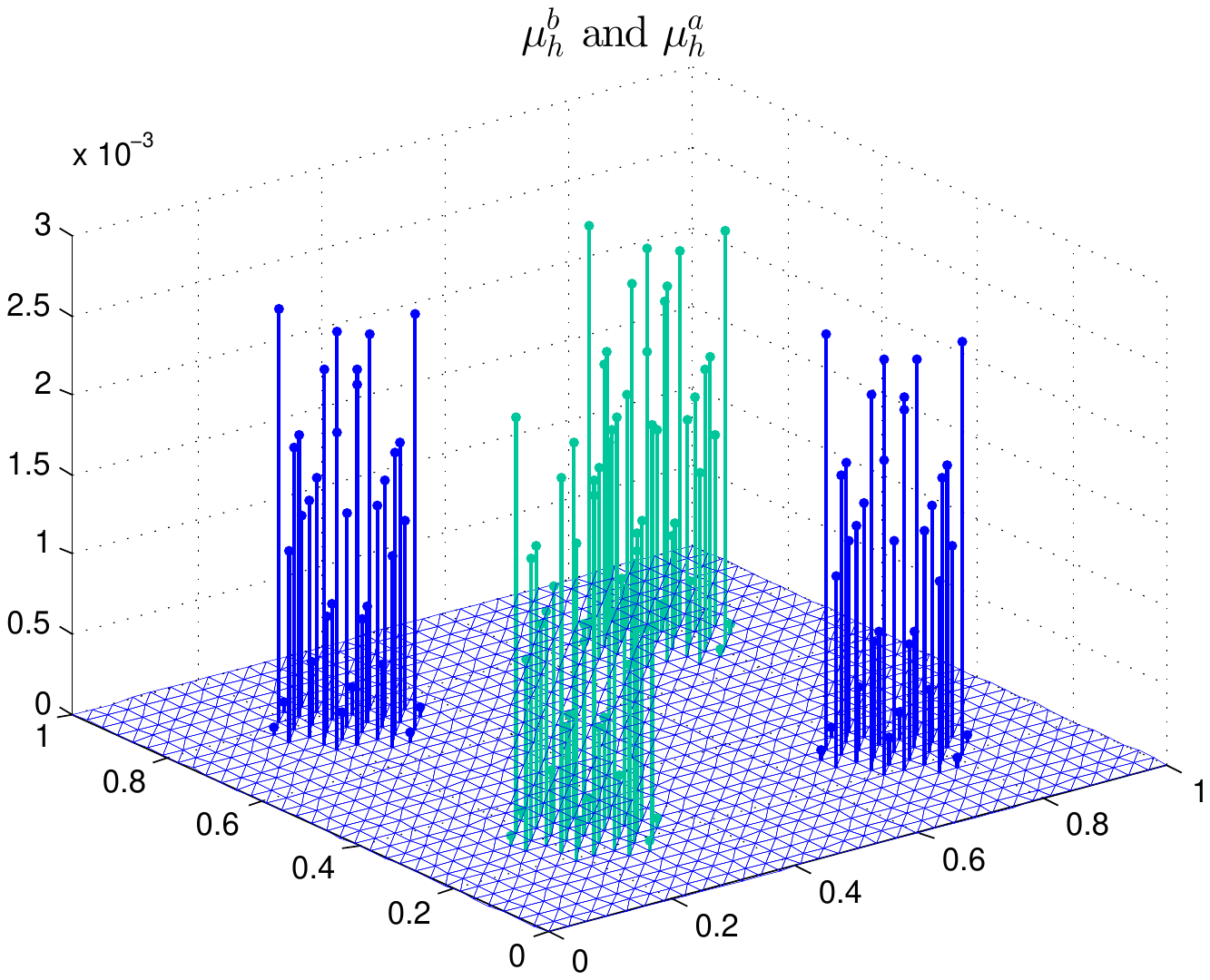}
                \caption{The multipliers $\bar \mu^a_h$ (in blue) and $\bar \mu^b_h$ (in green).}
        \end{subfigure}

        \caption{Example~\ref{example: y power 5} Case~3 with choice~\textbf{A1} for $y_0$: The values of $\|\bar p_h\|_{L^6}$, $\eta(\alpha)$ and $J(\bar u_h)$ vs. $\alpha$. The optimal state $\bar y_h$, the multipliers $\bar \mu^a_h$,$\bar \mu^b_h$, the optimal control $\bar u_h$ and the adjoint state $\bar p_h$ for $\alpha=10^{-5}$.}
        \label{figure: example y5 case(constrained state) choiceA1}
\end{figure}

\begin{table}[p]
\caption{Example~\ref{example: y power 5} Case~3 with choice~\textbf{A2} for $y_0$: The values of  $\|\bar p_h\|_{L^6} $, $\eta(\alpha)$ and $J(\bar u_h)$ for different values of $\alpha$.}
\label{table: example y5 case(constrained state) choiceA2}
\begin{tabular}{ l  c  c  c}
\toprule
$\alpha$ &	  $\|\bar p_h\|_{L^6}$&        $\eta(\alpha)$ &   $J(\bar u_h)$  \\
\midrule[1pt]

1.0e-06 &	  1.139290773221e-03 &	 	 8.697974773247e-04 &	 	 1.525635040951e+02   \\
1.0e-05 &	  8.200728224157e-03 &	 	 2.498914960443e-03 &	 	 1.536016384574e+02   \\
1.0e-04 &	  2.474482888749e-02 &	 	 7.179344781194e-03 &	 	 1.559116076253e+02   \\
1.0e-03 &	  9.716506658549e-02 &	 	 2.062614866979e-02 &	 	 1.600204462920e+02   \\
1.0e-02 &	  1.800129125912e-01 &	 	 5.925861229879e-02 &	 	 1.627566303073e+02   \\
1.0e-01 &	  3.493646725426e-01 &	 	 1.702490943800e-01 &	 	 1.642025836782e+02   \\
1.0e+00 &	  3.554038724369e-01 &	 	 4.891230660460e-01 &	 	 1.644198119684e+02   \\
1.0e+01 &	  3.560155910725e-01 &	 	 1.405243150394e+00 &	 	 1.644419766411e+02   \\
1.0e+02 &	  3.560769159456e-01 &	 	 4.037242258255e+00 &	 	 1.644441976184e+02   \\
1.0e+03 &	  3.560830499750e-01 &	 	 1.159893577654e+01 &	 	 1.644444197614e+02   \\

\bottomrule 
\end{tabular}
\end{table}

\begin{figure}[p]
        \centering
        \begin{subfigure}[h!]{0.5\textwidth}
                \includegraphics[trim = 40mm 80mm 30mm 70mm, clip, width=\textwidth]{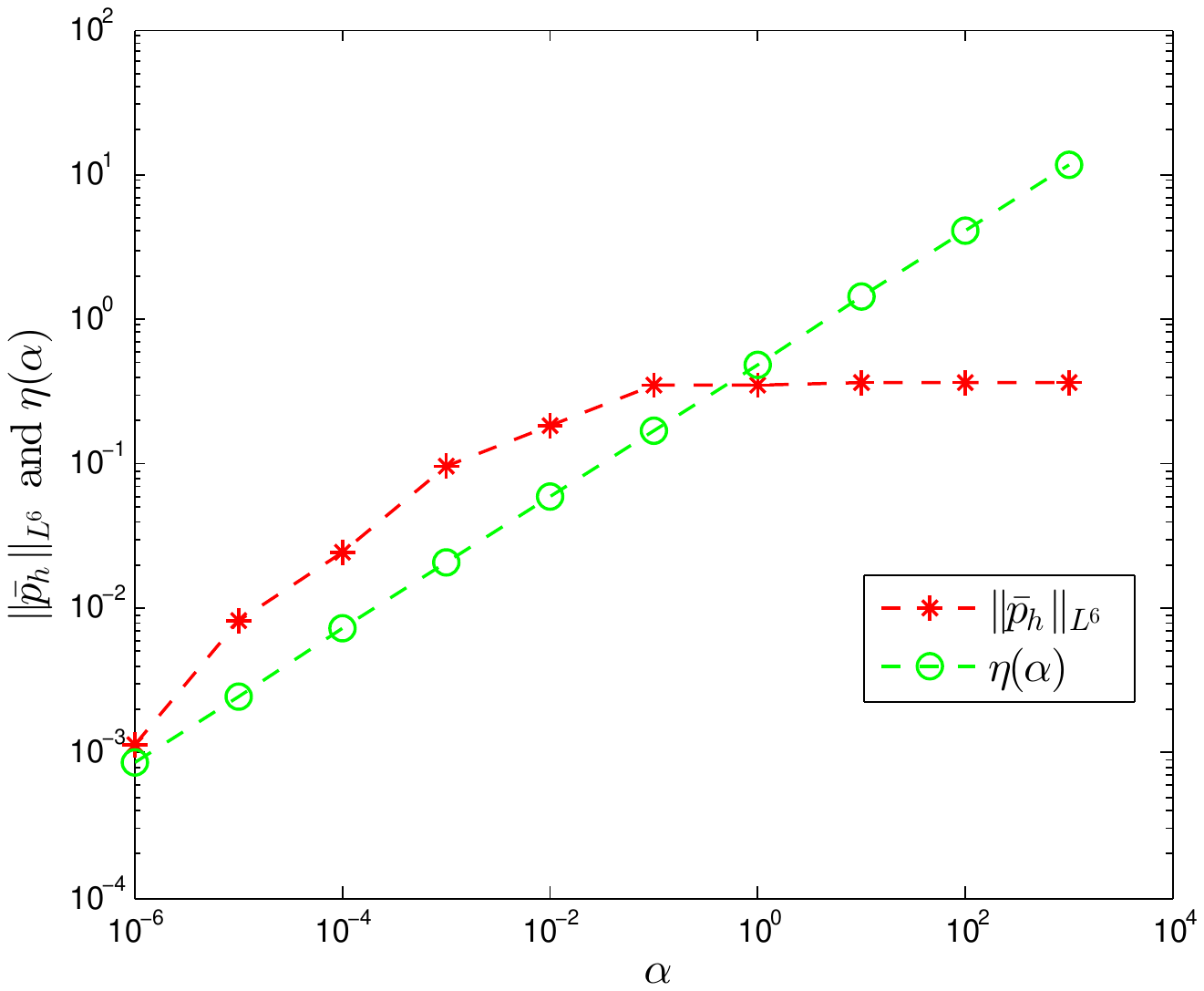}
                \caption{$\|\bar p_h\|_{L^6}$ and $\eta(\alpha)$ vs. $\alpha$.}
        \end{subfigure}%
        ~ 
        \begin{subfigure}[h!]{0.5\textwidth}
                \includegraphics[trim = 30mm 80mm 30mm 70mm, clip, width=\textwidth]{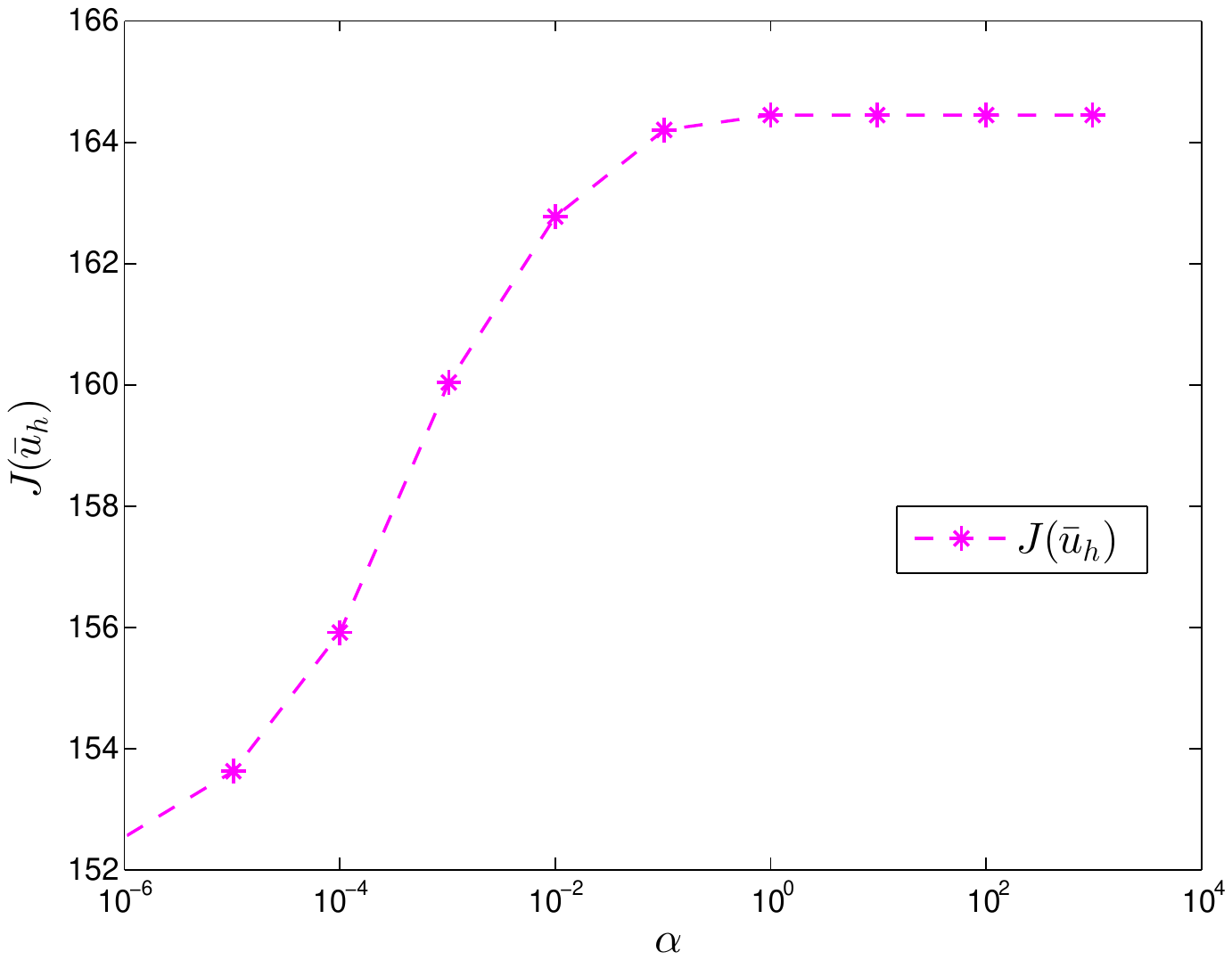}
                \caption{$J(\bar u_h)$  vs. $\alpha$.}
        \end{subfigure}
        
        \begin{subfigure}[h!]{0.5\textwidth}
                \includegraphics[trim = 40mm 80mm 30mm 70mm, clip, width=\textwidth]{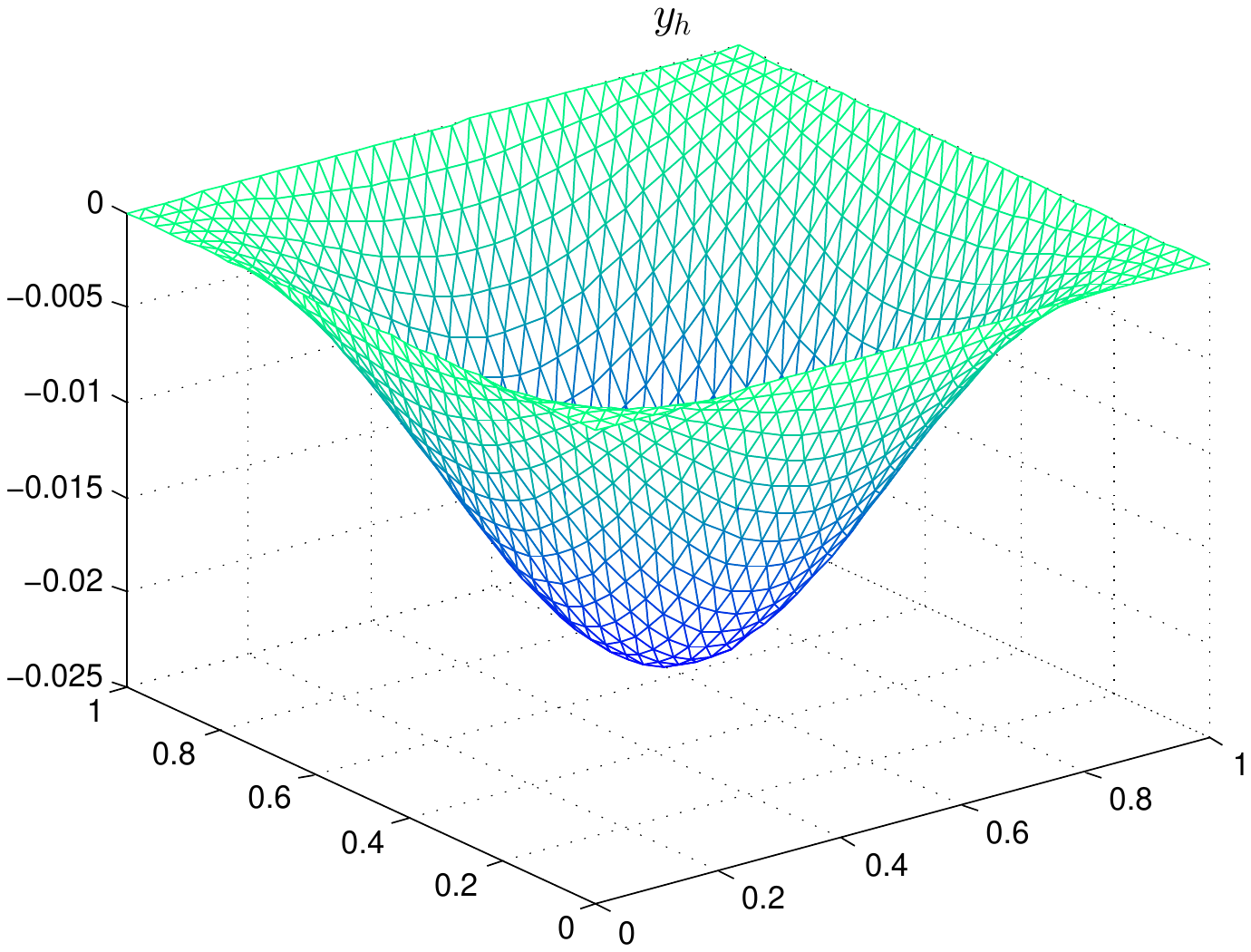}
                \caption{The optimal state $\bar y_h$.}
        \end{subfigure}~
        \begin{subfigure}[h!]{0.5\textwidth}
                \includegraphics[trim = 40mm 80mm 30mm 70mm, clip, width=\textwidth]{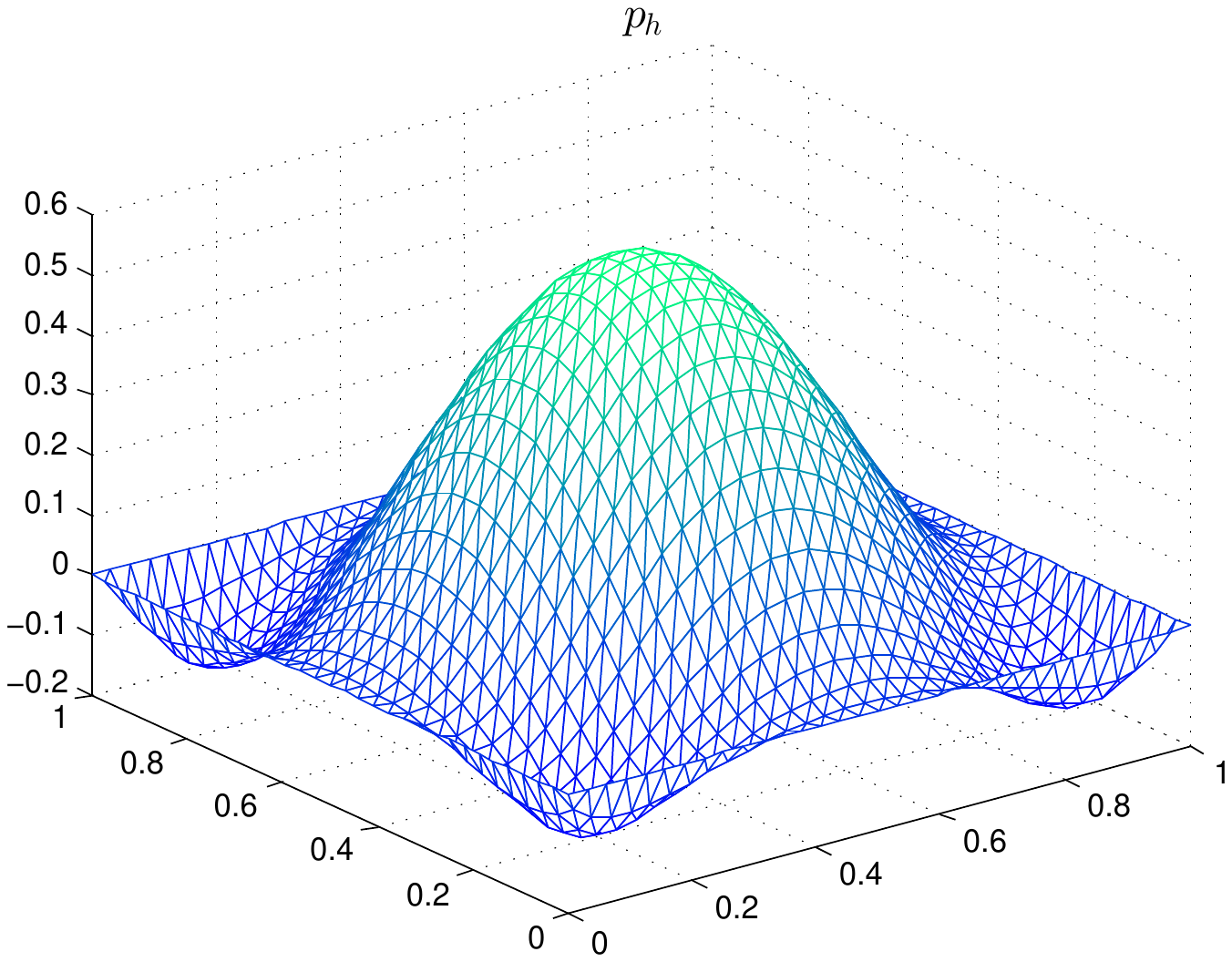}
                \caption{The adjoint state $\bar p_h$.}
        \end{subfigure}
        
        \begin{subfigure}[h!]{0.5\textwidth}
                \includegraphics[trim = 40mm 80mm 30mm 70mm, clip, width=\textwidth]{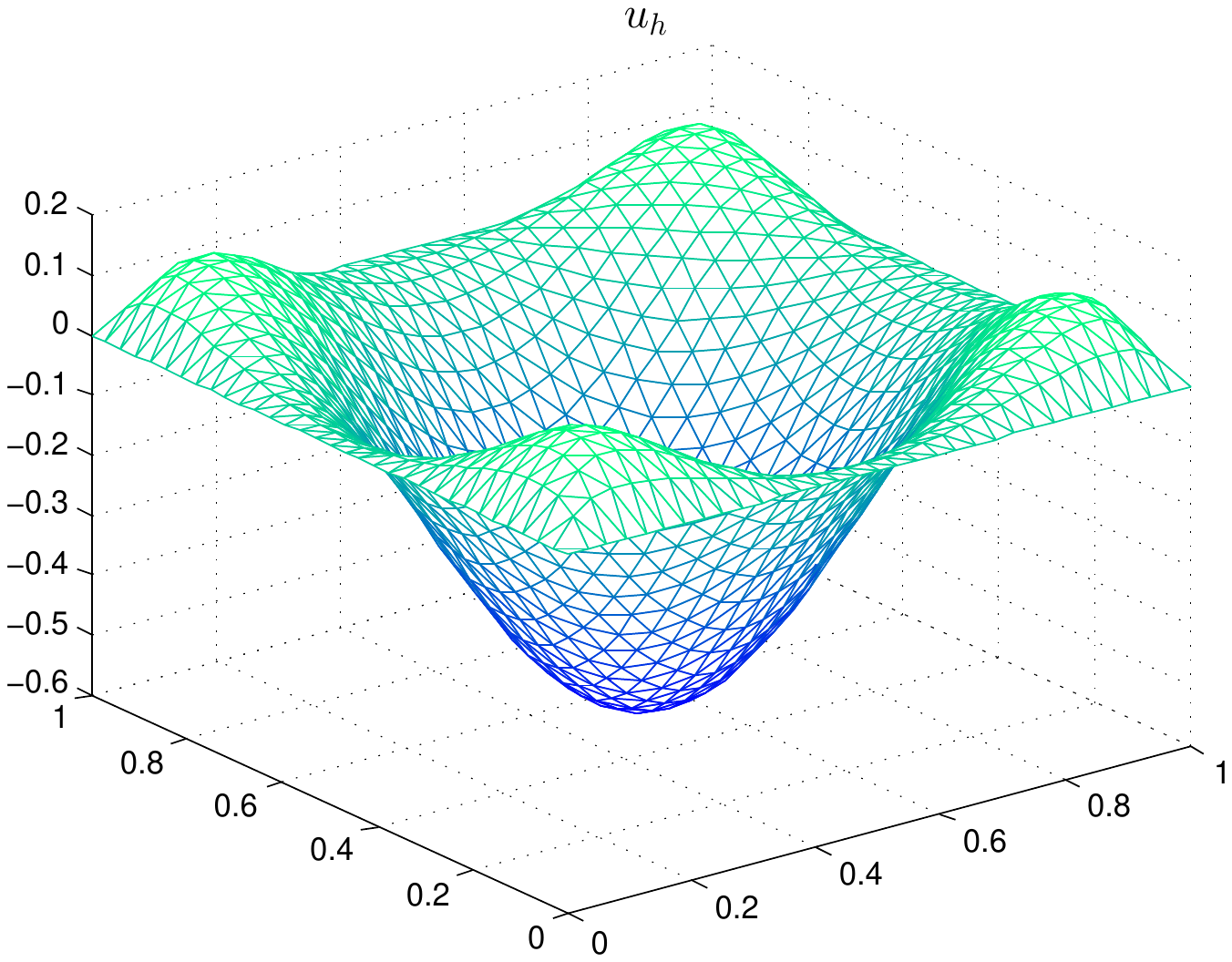}
                \caption{The optimal control $\bar u_h$.}
        \end{subfigure}
        
        \caption{Example~\ref{example: y power 5} Case~3 with choice~\textbf{A2} for $y_0$: The values of $\|\bar p_h\|_{L^6}$, $\eta(\alpha)$ and $J(\bar u_h)$ vs. $\alpha$. The optimal state $\bar y_h$, the optimal control $\bar u_h$ and the adjoint state $\bar p_h$ for $\alpha=1$.}
        \label{figure: example y5 case(constrained state) choiceA2}
\end{figure}

\end{example}

\section{Appendix}

\begin{lemma}
\label{lemma:0}
We have for $a,b \geq0, \lambda,\mu >0$ that
\begin{displaymath}
a^{\lambda} b^{\mu} \leq \frac{\lambda^{\lambda} \mu^{\mu}}{(\lambda+\mu)^{\lambda+\mu}} (a+b)^{\lambda+\mu}.
\end{displaymath}
\end{lemma}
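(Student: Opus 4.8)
The plan is to recognize the claimed inequality as a sharp form of the weighted arithmetic--geometric mean inequality and to derive it from the concavity of the logarithm. First I would dispose of the degenerate case: if $a=0$ or $b=0$, then the left-hand side vanishes while the right-hand side is nonnegative, so the estimate is trivial; hence I assume $a,b>0$ from now on.

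Next, I set $\theta:=\frac{\lambda}{\lambda+\mu}\in(0,1)$, so that $1-\theta=\frac{\mu}{\lambda+\mu}$, and use the elementary bound $u^{\theta}v^{1-\theta}\le \theta u+(1-\theta)v$ for $u,v>0$, which is an immediate consequence of the concavity of $\log$ (i.e.\ of $\log(\theta u+(1-\theta)v)\ge \theta\log u+(1-\theta)\log v$). Applying it to the rescaled quantities $u:=\frac{(\lambda+\mu)a}{\lambda}$ and $v:=\frac{(\lambda+\mu)b}{\mu}$, the right-hand side collapses to $\theta u+(1-\theta)v=a+b$, while the left-hand side equals $(\lambda+\mu)\,\lambda^{-\theta}\mu^{-(1-\theta)}\,a^{\theta}b^{1-\theta}$. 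Rearranging gives $a^{\theta}b^{1-\theta}\le \frac{\lambda^{\theta}\mu^{1-\theta}}{\lambda+\mu}\,(a+b)$, and raising both sides to the power $\lambda+\mu$ produces exactly the assertion, since $(a^{\theta}b^{1-\theta})^{\lambda+\mu}=a^{\lambda}b^{\mu}$ and $\big(\tfrac{\lambda^{\theta}\mu^{1-\theta}}{\lambda+\mu}\big)^{\lambda+\mu}=\tfrac{\lambda^{\lambda}\mu^{\mu}}{(\lambda+\mu)^{\lambda+\mu}}$.

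The computation is routine and there is no genuine obstacle; the only point that requires a little care is the choice of the rescaling factors $\frac{\lambda+\mu}{\lambda}$ and $\frac{\lambda+\mu}{\mu}$, which is precisely what makes the constant $\frac{\lambda^{\lambda}\mu^{\mu}}{(\lambda+\mu)^{\lambda+\mu}}$ sharp: equality holds if and only if $u=v$, i.e.\ $a/\lambda=b/\mu$. As an alternative route, one could instead fix $a+b=c$ and maximise $f(t)=t^{\lambda}(c-t)^{\mu}$ over $t\in[0,c]$; the function vanishes at the endpoints and has a unique interior critical point at $t=\frac{\lambda}{\lambda+\mu}c$, and substituting this value back yields the same constant.
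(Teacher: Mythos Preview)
Your proof is correct and essentially the same as the paper's. The paper invokes Young's inequality $xy\le \tfrac{1}{P}x^P+\tfrac{1}{Q}y^Q$ with $P=\tfrac{\lambda+\mu}{\lambda}$, $Q=\tfrac{\lambda+\mu}{\mu}$, $x=(Pa)^{1/P}$, $y=(Qb)^{1/Q}$; unwinding those substitutions gives exactly your weighted AM--GM step $u^{\theta}v^{1-\theta}\le \theta u+(1-\theta)v$ applied to $u=\tfrac{(\lambda+\mu)a}{\lambda}$, $v=\tfrac{(\lambda+\mu)b}{\mu}$, so the two arguments coincide.
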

\begin{proof}
Apply Young's inequality $ xy \leq \tfrac{1}{P} \, x^P + \tfrac{1}{Q} \,  y^Q \, x,y \geq 0, \,  \tfrac{1}{P}+\tfrac{1}{Q}=1$ \\
to $P=\frac{\lambda+ \mu}{\lambda}$, $Q=\frac{\lambda+ \mu}{\mu}$ and 
$x= \bigl( P a \bigr)^{\frac{1}{P}}, \, y= \bigl( Q b \bigr)^{\frac{1}{Q}}$.
\end{proof}

\begin{lemma}
\label{lemma: 1}
Suppose that Assumption 1 holds. Then we have for $a,b \in \mathbb{R}$
\begin{equation*}
\Big|  \int_0^1 \phi'\big(t a + (1-t)b \big)-\phi'(b) \, dt  \Big| \leq | a - b| L_r \Big(  \int_0^1 \phi'\big(t a+ (1-t)b \big) \, dt  \Big)^{\frac{1}{r}},
\end{equation*}
where
\[
L_r:= M \big(\frac{r-1}{2r-1} \big)^{\frac{r-1}{r}}.
\]
\end{lemma}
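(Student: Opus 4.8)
The plan is to reduce everything to a one--dimensional computation along the segment joining $b$ to $a$, and then to combine the structural bound (\ref{assumption: 1}) with Fubini's theorem and H\"older's inequality.

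First I would introduce the abbreviation $\psi(t) := \phi'(ta+(1-t)b)$, noting that $\psi$ is nonnegative (since $\phi$ is monotonically increasing) and differentiable with $\psi'(t) = (a-b)\phi''(ta+(1-t)b)$. Writing $\phi'(ta+(1-t)b)-\phi'(b) = \psi(t)-\psi(0) = \int_0^t \psi'(\sigma)\, d\sigma$ and integrating in $t\in[0,1]$, I obtain
\[
\int_0^1 \big(\phi'(ta+(1-t)b)-\phi'(b)\big)\, dt = (a-b)\int_0^1\!\!\int_0^t \phi''(\sigma a+(1-\sigma)b)\, d\sigma\, dt.
\]
Applying Fubini's theorem to the double integral over $\{0\le\sigma\le t\le 1\}$ rewrites it as $\int_0^1 (1-\sigma)\,\phi''(\sigma a+(1-\sigma)b)\, d\sigma$. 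Taking absolute values and invoking Assumption~1 in the form $|\phi''|\le M(\phi')^{1/r}$ then yields
\[
\Big|\int_0^1 \big(\phi'(ta+(1-t)b)-\phi'(b)\big)\, dt\Big| \le |a-b|\, M \int_0^1 (1-\sigma)\,\phi'(\sigma a+(1-\sigma)b)^{1/r}\, d\sigma.
\]

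The key analytic step is then H\"older's inequality with conjugate exponents $r$ and $r'=\tfrac{r}{r-1}$, applied to the product $(1-\sigma)\cdot\phi'(\sigma a+(1-\sigma)b)^{1/r}$: this produces the factor $\big(\int_0^1 \phi'(\sigma a+(1-\sigma)b)\, d\sigma\big)^{1/r}$ appearing on the right-hand side of the claim, times $\big(\int_0^1 (1-\sigma)^{r'}\, d\sigma\big)^{1/r'}$. A direct evaluation of the last integral gives $\tfrac{1}{r'+1}=\tfrac{r-1}{2r-1}$ since $r'+1=\tfrac{2r-1}{r-1}$, so that this factor equals $\big(\tfrac{r-1}{2r-1}\big)^{(r-1)/r}$; together with $M$ this is exactly $L_r$, which completes the argument.

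I do not expect any serious obstacle here; the only points needing a word of care are that $\phi'\ge 0$ (so the fractional powers $\phi'(\cdot)^{1/r}$ are well defined) and the degenerate case in which $\phi'$ vanishes identically on the segment between $b$ and $a$, where Assumption~1 forces $\phi''$ to vanish there too and the asserted inequality reduces to $0\le 0$.
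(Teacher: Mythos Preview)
Your proposal is correct and follows essentially the same route as the paper's proof: rewrite the integrand via the fundamental theorem of calculus as a double integral, swap the order of integration to produce the weight $(1-\sigma)$, apply Assumption~1, and then use H\"older's inequality with exponents $r$ and $r'=r/(r-1)$ together with the explicit evaluation $\|1-\cdot\|_{L^{r'}(0,1)}=\big(\tfrac{r-1}{2r-1}\big)^{(r-1)/r}$. Your additional remarks on $\phi'\ge 0$ and the degenerate case are helpful but do not alter the argument.
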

\begin{proof}
We start by noticing that
\begin{align*}
\int_0^1 \phi'\big(t a+ (1-t)b \big)-\phi'(b) \, dt  &= \int_0^1 \int_0^t \phi''\big(\tau a+ (1-\tau)b \big)(a-b) \, d\tau \, dt \\
& = (a-b)  \int_0^1  (1-t)\phi''\big(t a+ (1-t)b \big) \, dt. 
\end{align*}
Therefore, taking the absolute value and using  Assumption 1 we get 
\begin{eqnarray*}
\lefteqn{ \hspace{-1.5cm}
\Big|  \int_0^1 \phi'\big(t a+ (1-t)b \big)-\phi'(b) \, dt  \Big| \leq | a - b | M \int_0^1  (1-t)\phi'\big(t a+ (1-t)b \big)^{\frac{1}{r}} \, dt } \\
& \leq & | a - b | M \| 1-t \|_{L^{r'}(0,1)} \Big(  \int_0^1 \phi'\big(t a+ (1-t)b \big) \, dt  \Big)^{\frac{1}{r}},
\end{eqnarray*}
where $\tfrac{1}{r}+\tfrac{1}{r'}=1$. It is easy to see that
\[
\| 1-t \|_{L^{r'}(0,1)} = \big(\frac{1}{r'+1} \big)^{\frac{1}{r'}}=\big(\frac{r-1}{2r-1} \big)^{\frac{r-1}{r}}.
\]
Denoting $M \| 1-t \|_{L^{r'}(0,1)}$ by $L_r$ completes the proof.
\end{proof}

\begin{theorem} \label{lemma: 2} ({\bf Gagliardo--Nirenberg interpolation inequality}) \\
For  $2 \leq q < \infty$ we define $\theta=1-\frac{2}{q}$ as well as
\begin{displaymath}
GN_q:= \sup_{f \in H^1(\mathbb{R}^2), f \neq 0} \frac{\Vert f \Vert_{L^q(\mathbb{R}^2)}}{\Vert f \Vert_{L^2(\mathbb{R}^2)}^{1-\theta} \Vert \nabla f \Vert_{L^2(\mathbb{R}^2)}^{\theta}}.
\end{displaymath}
Then $GN_q \leq C_q:= \min(C^{(1)}_q,C^{(2)}_q,C^{(3)}_q)$, where
\begin{eqnarray}
C^{(1)}_q &=& \bigl( \theta C_{2,2 \theta} \bigr)^{-\theta}, \quad \mbox{ if } q \geq 4; \label{c1} \\[3mm]
C^{(2)}_q & = & \frac{1}{\sqrt{\theta^{\theta} (1- \theta)^{1 -\theta}}} \bigl( 2 \pi B(1,\frac{2(1-\theta)}{2 \theta}) \bigr)^{\theta/2} k_B(\frac{4}{2+2 \theta}); 
\label{c2} \\[4mm]
C^{(3)}_q & = &   \bigl(  \frac{1}{\pi} \bigr)^{\frac{q-2}{2q}}  \prod_{j=2}^{\infty} \bigl( \frac{2^j}{2^j +q-2} \bigr)^{\frac{2^j+2-q}{2^j q}}. \label{c3}
\end{eqnarray}
Here,
\begin{eqnarray*}
C_{2,s} & = & 2^{1/s} \bigl( \frac{2-s}{s-1} \bigr)^{(s-1)/s} \Bigl( 2 \pi B(\frac{2}{s},3-\frac{2}{s}) \Bigr)^{1/2}, \; 1< s < 2; \; C_{2,1}=2 \sqrt{\pi}; \\
B(a,b) & = & \frac{\Gamma(a) \Gamma(b)}{\Gamma(a+b)}, \quad a,b>0 \\
k_B(p) & = &  \bigl( \frac{p}{2 \pi} \bigr)^{1/p} \bigl( \frac{p'}{2 \pi} \bigr)^{- 1/p'}, \quad \frac{1}{p} + \frac{1}{p'}=1.
\end{eqnarray*} 
\end{theorem}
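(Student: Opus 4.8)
The statement collects three independent upper bounds for the optimal Gagliardo--Nirenberg constant $GN_q$, and the plan is to prove $GN_q\le C_q^{(i)}$ for $i=1,2,3$ separately (with $i=1$ only for $q\ge 4$) and then take the minimum. Since $GN_q$ is the smallest admissible constant in
\[
\|f\|_{L^q(\mathbb{R}^2)}\le C\,\|f\|_{L^2(\mathbb{R}^2)}^{1-\theta}\,\|\nabla f\|_{L^2(\mathbb{R}^2)}^{\theta},\qquad f\in H^1(\mathbb{R}^2),\ \ \theta=1-\tfrac2q,
\]
it suffices to establish this inequality with each of the three constants for every $f$; by density and the pointwise bound $|\nabla|f||\le|\nabla f|$ one may assume $0\le f\in C^\infty_c(\mathbb{R}^2)$. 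Each of the three arguments is a ``sharp constant'' version of a classical inequality, rescaled so that exactly $\|f\|_{L^2}$ and $\|\nabla f\|_{L^2}$ appear on the right.

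For $C_q^{(1)}$ ($q\ge 4$) I would start from the sharp Sobolev inequality in the plane (Bliss, Rosen, Talenti, Aubin): for $1<s<2$ and $s^\ast=\tfrac{2s}{2-s}$ one has $\|g\|_{L^{s^\ast}(\mathbb{R}^2)}\le C_{2,s}^{-1}\|\nabla g\|_{L^s(\mathbb{R}^2)}$, where $C_{2,s}$ is exactly the reciprocal of Talenti's extremal constant and hence has the Beta--function form displayed in the theorem, with $C_{2,1}=2\sqrt{\pi}$ being the planar isoperimetric inequality. Applying this with $g=|f|^{1/\theta}$ and then Hölder's inequality in the form $\||f|^{1/\theta-1}\nabla f\|_{L^s}\le\||f|^{1/\theta-1}\|_{L^{2s/(2-s)}}\|\nabla f\|_{L^2}$ gives
\[
\|f\|_{L^{s^\ast/\theta}}^{1/\theta}\le \frac{1}{\theta\,C_{2,s}}\,\|f\|_{L^{(1/\theta-1)s^\ast}}^{1/\theta-1}\,\|\nabla f\|_{L^2}.
\]
The choice $s=2\theta$ makes $s^\ast/\theta=q$ and $(1/\theta-1)s^\ast=2$ hold simultaneously; the constraint $1\le s<2$ forces $q\ge 4$, with $q=4$ corresponding to the isoperimetric endpoint. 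Raising the last inequality to the power $\theta$ yields $GN_q\le(\theta\,C_{2,2\theta})^{-\theta}=C_q^{(1)}$.

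For $C_q^{(2)}$ I would pass to the Fourier side. The sharp Hausdorff--Young (Babenko--Beckner) inequality gives $\|f\|_{L^q(\mathbb{R}^2)}\le c\,\|\widehat f\|_{L^{q'}(\mathbb{R}^2)}$ with a constant built from $k_B\big(\tfrac{4}{2+2\theta}\big)=k_B(q')$. To estimate $\|\widehat f\|_{L^{q'}}$ I would insert the radial weight $(\lambda^2+|\xi|^2)$, $\lambda>0$, and apply Hölder's inequality with exponents $\tfrac2{q'}$ and its conjugate; this writes $\|\widehat f\|_{L^{q'}}$ as a product of $\big(\int_{\mathbb{R}^2}(\lambda^2+|\xi|^2)|\widehat f|^2\,d\xi\big)^{1/2}$, which by Plancherel is a multiple of $(\lambda^2\|f\|_{L^2}^2+\|\nabla f\|_{L^2}^2)^{1/2}$, and of $\big(\int_{\mathbb{R}^2}(\lambda^2+|\xi|^2)^{-1/\theta}\,d\xi\big)^{\theta/2}$, which by scaling together with $\int_{\mathbb{R}^2}(1+|\eta|^2)^{-\gamma}\,d\eta=\pi B(1,\gamma-1)$ produces the Beta factor $\big(2\pi B(1,\tfrac{1-\theta}{\theta})\big)^{\theta/2}$ (up to Fourier normalisation constants). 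Finally, minimising over $\lambda>0$ is exactly Lemma~\ref{lemma:0} applied with $a=\lambda^2\|f\|_{L^2}^2$, $b=\|\nabla f\|_{L^2}^2$ and $(\lambda,\mu)=(1-\theta,\theta)$, and contributes the factor $\big(\theta^\theta(1-\theta)^{1-\theta}\big)^{-1/2}$; multiplying the three factors gives $GN_q\le C_q^{(2)}$.

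The bound $C_q^{(3)}$ is the genuinely new estimate, and I expect it to be the main obstacle. The plan is an iteration in the spirit of Gagliardo's original proof. One starts from the Ladyzhenskaya inequality $\|g\|_{L^4(\mathbb{R}^2)}^2\le\tfrac1{\sqrt{\pi}}\|g\|_{L^2}\|\nabla g\|_{L^2}$ --- obtained by applying the $L^1$--Sobolev (isoperimetric) inequality to $g^2$ together with the Cauchy--Schwarz inequality, and responsible for the prefactor $(\tfrac1\pi)^{(q-2)/(2q)}$ --- and applies it successively to the dyadic powers $|f|^{2^{j-1}}$, $j\ge 2$. At each stage the term that arises is split by Hölder's inequality and the intermediate Lebesgue exponent is interpolated between two neighbouring dyadic levels and $L^q$; the exponents of the norms that appear are fixed by scaling, and optimising the parameters at level $j$ produces the factor $\big(\tfrac{2^j}{2^j+q-2}\big)^{(2^j+2-q)/(2^jq)}$. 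The delicate points, where most of the work lies, are: (a) choosing these parameters so that the accumulated constant equals the asserted product and not something larger, and controlling in the limit $j\to\infty$ the residual high--$L^p$ norms; and (b) checking that the infinite product converges --- which it does, since the logarithm of its $j$-th factor is $O(2^{-j})$ --- and that the limit is valid for every $2\le q<\infty$. Combining the three estimates and setting $C_q=\min\{C_q^{(1)},C_q^{(2)},C_q^{(3)}\}$, with $C_q^{(1)}$ dropped when $q<4$, completes the proof.
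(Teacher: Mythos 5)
Your treatment of the bounds (\ref{c1}) and (\ref{c2}) differs from the paper only in that the paper does not prove them at all: they are quoted from Veling \cite{veling2002lower}, the second going back to Nasibov \cite{nasibov1990}. Your sketches --- the sharp Sobolev inequality applied to $|f|^{1/\theta}$ with a H\"older split and the choice $s=2\theta$ (which indeed forces $q\ge4$), and sharp Hausdorff--Young combined with a weighted H\"older on the Fourier side and optimisation in $\lambda$ --- are the standard derivations underlying those citations and are fine in outline.

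The genuine gap concerns (\ref{c3}), which is exactly the part the paper actually proves and whose crucial computation you explicitly defer (your ``delicate points (a) and (b)'' are the proof). Moreover, the scheme you describe cannot produce the asserted constant. In the paper the index $2^j$ in the product does not refer to dyadic powers $|f|^{2^{j-1}}$ of the function: it counts the levels of the decreasing sequence $q_0=q$, $q_{k+1}=\tfrac12 q_k+1$, for which $q_k-2=2^{-k}(q-2)\downarrow 0$. Each step applies the \emph{sharp} Del Pino--Dolbeault Gagliardo--Nirenberg inequality (\ref{delpino}) (Theorem 1 of \cite{DD02} with $d=2$) with $p=\tfrac12 q_k$, whose explicit constant $\pi^{-\theta/2}\bigl(\tfrac{p+1}{2}\bigr)^{\theta/2-1/(2p)}$ contributes exactly one factor of the infinite product; the bookkeeping is the induction (\ref{induction}), and letting $k\to\infty$ with $q_k\to2$ gives (\ref{c3}). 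Your building blocks, by contrast, are the isoperimetric-based Ladyzhenskaya inequality together with H\"older and interpolation steps, which carry constant one; at $q=4$ such a scheme can at best reproduce the constant $\pi^{-1/4}\approx 0.751$, whereas $C^{(3)}_4=\pi^{-1/4}\prod_{j\ge2}\bigl(\tfrac{2^j}{2^j+2}\bigr)^{(2^j-2)/(2^{j+2})}\approx 0.648$ (this is the value $1/C_4\approx 1.5431$ used in Example~\ref{example: y power 3}). The extra factors strictly below one encode precisely the sharpness of the Del Pino--Dolbeault constants, which is non-elementary input; without it, or an equally strong one-step inequality, your iteration can only yield a strictly weaker bound than $C^{(3)}_q$.
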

\begin{proof} The bounds (\ref{c1}) and (\ref{c2}) can be found in the paper \cite{veling2002lower} by Veling. We remark that 
$GN_q=\lambda_{2,\theta}^{-1}$, where $\lambda_{2,\theta}$ is defined in \cite[(1.7)]{veling2002lower}. The estimate (\ref{c1}) is
\cite[(1.31)]{veling2002lower} (note that $\theta \geq \frac{1}{2} \Leftrightarrow q \geq 4$), while (\ref{c2}) is
\cite[(1.42),(1.43)]{veling2002lower}, where the latter bound has been proved by Nasibov in \cite{nasibov1990}. \\[3mm]
Let us now turn to the proof of (\ref{c3}). To begin, 
we claim that for all $k \in \mathbb{N}_0$
\begin{equation}  \label{induction}
\displaystyle
\Vert f \Vert_{L^q} \leq \bigl( \frac{1}{\pi} \bigr)^{\frac{1}{2}(1 - \frac{q_k}{q})}  
\prod_{j=2}^{k+1} \bigl( \frac{2^j}{2^j +q-2} \bigr)^{\frac{2^j+2-q}{2^j q}}
 \Vert f \Vert_{L^{q_k}}^{\frac{q_k}{q}} \Vert \nabla f \Vert_{L^2}^{1-\frac{q_k}{q}},
 \end{equation}
 where
 \begin{displaymath}
 q_k = 2^{-k} \bigl( q+ 2( 2^k -1) \bigr).
 \end{displaymath}
 The inequality clearly holds for $k=0$. Suppose that (\ref{induction}) is true for some $k \in \mathbb{N}_0$. We infer from
 Theorem 1 in \cite{DD02} for the case $d=2$ that
 \begin{equation}  \label{delpino}
 \displaystyle
 \Vert f \Vert_{L^{2p}} \leq A  \Vert f \Vert_{L^{p+1}}^{1-\theta} \Vert \nabla f \Vert_{L^2}^{\theta}, \qquad 1<p<\infty.
\end{equation}
Here, 
\begin{displaymath}
A=  \bigl( \frac{y(p-1)^2}{4 \pi} \bigr)^{\frac{\theta}{2}} \bigl( \frac{2y-2}{2y} \bigr)^{\frac{1}{2p}}  \bigl( \frac{\Gamma(y)}{
\Gamma(y-1)} \bigr)^{\frac{\theta}{2}} \quad \mbox{ with } \quad 
\theta = \frac{2(p-1)}{4p}, \quad y= \frac{p+1}{p-1}.
\end{displaymath}
Using the formula for $y$  and observing that $\Gamma(y)= (y-1) \Gamma(y-1)$, the expression for $A$ can be simplified to
\begin{displaymath}
A = \bigl( \frac{1}{\pi} \bigr)^{\frac{\theta}{2}} \bigl( \frac{p+1}{2} \bigr)^{\frac{\theta}{2} - \frac{1}{2p}}.
\end{displaymath}
We apply (\ref{delpino}) for $p=\frac{1}{2} q_k$ and obtain
\begin{equation}  \label{lqk1}
\displaystyle
\Vert f \Vert_{L^{q_k}} \leq A  \Vert f \Vert_{L^{\frac{1}{2} q_k+1}}^{1-\theta} \Vert \nabla f \Vert_{L^2}^{\theta},
\end{equation}
where
\begin{displaymath}
A=  \bigl( \frac{1}{\pi} \bigr)^{\frac{\theta}{2}} \bigl( \frac{\frac{1}{2} q_k+1}{2} \bigr)^{\frac{\theta}{2} - \frac{1}{q_k}} \quad
\mbox{ and } \quad \theta = \frac{q_k -2}{2 q_k}.
\end{displaymath}
Since $\frac{1}{2} q_k+1= q_{k+1}$ we find that
\begin{displaymath}
A=   \bigl( \frac{1}{\pi} \bigr)^{\frac{\theta}{2}} \bigl( \frac{q_{k+1}}{2} \bigr)^{\frac{\theta}{2} - \frac{1}{q_k}} \quad \mbox{ and }
\quad \theta = 1 - \frac{q_{k+1}}{q_k},
\end{displaymath}
which, inserted into (\ref{lqk1}) yields
\begin{equation} \label{lqk2}
\displaystyle
\Vert f \Vert_{L^{q_k}} \leq \bigl( \frac{1}{\pi} \bigr)^{\frac{\theta}{2}} \bigl( \frac{q_{k+1}}{2} \bigr)^{\frac{\theta}{2} - \frac{1}{q_k}}
  \Vert f \Vert_{L^{q_{k+1}}}^{1-\theta} \Vert \nabla f \Vert^{\theta}.
\end{equation}
Using the induction hypothesis we infer
\begin{eqnarray*}
\lefteqn{
\Vert f \Vert_{L^q} \leq \bigl( \frac{1}{\pi} \bigr)^{\frac{1}{2}(1 - \frac{q_k}{q})+\frac{\theta}{2} \frac{q_k}{q}}  
 \bigl( \frac{q_{k+1}}{2} \bigr)^{(\frac{\theta}{2} - \frac{1}{q_k}) \frac{q_k}{q}} } \\
& & \times \prod_{j=2}^{k+1} \bigl( \frac{2^j}{2^j +q-2} \bigr)^{\frac{2^j+2-q}{2^j q}}
 \Vert f \Vert_{L^{q_{k+1}}}^{(1-\theta)\frac{q_k}{q}} \Vert \nabla f \Vert_{L^2}^{1-\frac{q_k}{q}+\theta \frac{q_k}{q}}.
\end{eqnarray*}
Elementary calculations show that
\begin{eqnarray*}
\frac{1}{2} \bigl(1 - \frac{q_k}{q} \bigr)+\frac{\theta}{2} \frac{q_k}{q} & = & \frac{1}{2} \bigl( 1 - \frac{q_{k+1}}{q} \bigr), \\
\bigl( \frac{q_{k+1}}{2} \bigr)^{(\frac{\theta}{2} - \frac{1}{q_k}) \frac{q_k}{q}} & = & \bigl( \frac{2^{k+2}}{2^{k+2} + q -2} \bigr)^{\frac{2^{k+2}+2-q}{2^{k+2} q}}, \\
(1-\theta) \frac{q_k}{q} & = & \frac{q_{k+1}}{q}, \\
1-\frac{q_k}{q}+\theta \frac{q_k}{q} & = & 1 - \frac{q_{k+1}}{q},
\end{eqnarray*}
which implies (\ref{induction}) for $k+1$. 
The result now follows by sending $k \rightarrow \infty$ in (\ref{induction}) and by observing that $\lim_{k \rightarrow \infty} q_k = 2$. 

\end{proof}

\newpage
\bibliographystyle{plain}

\end{document}